\newtheorem{theorem}{Theorem}[section]
\newtheorem{lemma}[theorem]{Lemma}
\newtheorem{claim}[theorem]{Claim}
\newtheorem{subclaim}[theorem]{Subclaim}
\newtheorem*{subclaim*}{Subclaim}
\newtheorem{corollary}[theorem]{Corollary}
\theoremstyle{definition}
\newtheorem{definition}[theorem]{Definition}
\theoremstyle{remark}
\newtheorem{remark}[theorem]{Remark}
\DeclareSymbolFont{AMSb}{U}{msb}{m}{n}
\DeclareMathSymbol{\N}{\mathbin}{AMSb}{"4E}
\DeclareMathSymbol{\Z}{\mathbin}{AMSb}{"5A}
\DeclareMathSymbol{\R}{\mathbin}{AMSb}{"52}
\DeclareMathSymbol{\Q}{\mathbin}{AMSb}{"51}
\DeclareMathSymbol{\I}{\mathbin}{AMSb}{"49}
\DeclareMathSymbol{\C}{\mathbin}{AMSb}{"43}
\newcommand\ad{{\mathsf{AD}}}
\newcommand\zfc{{\mathsf{ZFC}}}
\newcommand\zf{{\mathsf{ZF}}}
\newcommand\dc{{\mathsf{DC}}}
\newcommand\Ord{{\mathsf{Ord}}}
\newcommand\mc{{\mathsf{MC}}}
\newcommand\od{{\mathsf{OD}}}
\newcommand\crit{\textsc{crit}}
\newcommand\ult[2]{\text{Ult}(#1, #2)}
\newcommand\col{\text{Col}}
\newcommand{\BS}{{}^\omega\omega}
\newcommand{\PI}{\boldsymbol\Pi}
\newcommand{\SIGMA}{\boldsymbol\Sigma}
\newcommand{\DELTA}{\boldsymbol\Delta}
\newcommand{\cQ}{\mathcal{Q}}
\newcommand{\cP}{\mathcal{P}}
\newcommand{\cM}{\mathcal{M}}
\newcommand{\cN}{\mathcal{N}}
\newcommand{\cT}{\mathcal{T}}
\newcommand{\cU}{\mathcal{U}}
\newcommand{\cD}{\mathcal{D}}
\newcommand{\cR}{\mathcal{R}}
\newcommand{\cS}{\mathcal{S}}
\newcommand{\cG}{\mathcal{G}}
\newcommand{\bR}{\mathbb{R}}
\newcommand{\bP}{\mathbb{P}}
\newcommand{\bS}{\mathbb{S}}
\newcommand{\langpm}{\mathcal{L}_{\mathrm{pm}}(\{\dot x_i : i \in \omega\})} 
\newcommand{\lpm}{\mathcal{L}_{\mathrm{pm}}} 
\newcommand{\ctbleset}{\mathcal{P}_{\omega_1}(\mathbb{R})}
\newcommand{\lh}{\ensuremath{\operatorname{lh}}}
\newcommand{\Hom}{\ensuremath{\operatorname{Hom}}}
\newcommand{\Pot}{\mathcal{P}}
\newcommand{\comm}[1]{}
\begin{document}
\title{The consistency strength of long projective determinacy}

\subjclass[2010]{03E45, 03E60, 03E15, 03E55} 

\keywords{Infinite Game, Determinacy, Inner Model Theory, Large
  Cardinal, Long Game, Mouse} 

\author{Juan P. Aguilera}
\address{Juan P. Aguilera, Institut f\"ur diskrete Mathematik und
  Geometrie, Technische Universit\"at Wien. Wiedner Hauptstrasse 8-10,
  1040 Wien, Austria.} 
\email{aguilera@logic.at}
\author{Sandra M\"uller} 
\address{Sandra M\"uller, Kurt G\"odel
  Research Center, Institut f\"ur Mathematik, UZA 1, Universit\"at
  Wien. Augasse 2-6, 1090 Wien, Austria.}
\email{mueller.sandra@univie.ac.at} 
\thanks{The second-listed author, formerly known as Sandra Uhlenbrock, was
  partially supported by FWF grant number P 28157.}

\date{\today}

\begin{abstract}
  We determine the consistency strength of determinacy for projective
  games of length $\omega^2$. Our main theorem is that
  $\PI^1_{n+1}$-determinacy for games of length $\omega^2$ implies the
  existence of a model of set theory with $\omega + n$ Woodin
  cardinals. In a first step, we show that this hypothesis
  implies that there is a countable set of reals $A$ such that
  $M_n(A)$, the canonical inner model for $n$ Woodin cardinals
  constructed over $A$, satisfies $A = \bR$ and the Axiom of
  Determinacy. Then we argue how to obtain a model with $\omega + n$
  Woodin cardinal from this.
  
  We also show how the proof can be adapted to investigate the
  consistency strength of determinacy for games of length $\omega^2$
  with payoff in $\Game^\bR \PI^1_1$ or with $\sigma$-projective
  payoff.
\end{abstract}

\clearpage
\maketitle
\setcounter{tocdepth}{1}

\section{Introduction}\label{SectIntro}
We study the consistency strength of determinacy for games of length
$\omega^2$ with payoff in various pointclasses $\Gamma$.
Specifically, given a set $A\subset \omega^{\omega^2}$, i.e., a set of
sequences of natural numbers of length $\omega^2$, with
$A \in \Gamma$, consider the following game:
\[ \begin{array}{c|cccccccc} \mathrm{I} & n_0 & & n_2 & &\hdots &
    n_\omega & & \hdots \\ \hline
    \mathrm{II} & & n_1 & & n_3 & \hdots & & n_{\omega+1} & \hdots 
   \end{array} \]
 Players I and II alternate turns playing natural numbers to produce
 some
 $x = (n_0, n_1, \dots, n_\omega, n_{\omega+1}, \dots) \in
 \omega^{\omega^2}$.  Player I wins such a run $x$ of
 the game if, and only if, $x \in A$; otherwise, Player II wins. We
 study the strength of the statement that games of this form are
 determined, i.e., that one of 
 the players has a winning strategy. For all nontrivial classes
 $\Gamma$, this question is independent of Zermelo-Fraenkel set theory
 with the Axiom of Choice ($\zfc$); for some of them, however, it
 is known to follow from natural strengthenings of $\zfc$, namely, from assumptions
 on the existence of \emph{large cardinals}. 

 Recall that the projective subsets of a Polish space are those
 obtainable from Borel sets in finitely many stages by applying
 complements and projections from a finite power of the space.  We are
 mainly interested in the case where $\Gamma$ is a projective
 pointclass, i.e., $\PI^1_n$ for some natural number $n$, but we will
 also consider the cases in which $\Gamma$ is equal to
 $\Game^\bR \PI^1_1$ or a $\sigma$-projective pointclass.\footnote{The
   pointclass of all $\sigma$-projective sets is the smallest
   pointclass closed under complements, countable unions, and
   projections, where countable unions refer to sets which are subsets
   of the same product space. Moreover, we as usual identify $\bR$ and
   $\BS$.}
 
 The study of games of length $\omega^2$ is motivated by the folklore
 result that projective determinacy for games of length $\omega$
 implies projective determinacy for games of length $\alpha$, for any
 $\alpha<\omega^2$. It is also not difficult to see that
 $\PI^1_{n+1}$-determinacy of length $\omega^2$ follows from analytic
 determinacy for games of length $\omega\cdot(\omega+n)$, for 
 natural numbers $n$.\footnote{In fact, an argument as in
   \cite[Proposition 2.7]{AMS} with a more careful analysis of the
   complexity of the payoff sets (using projective determinacy) shows
   that these two determinacy hypotheses are equivalent.}

 Analytic determinacy for games of length $\omega \cdot (\omega + n)$
 was proved by Neeman \cite[Theorem 2A.3]{Ne04} from a large cardinal
 hypothesis. Specifically, he assumed the existence of a weakly
 iterable model of set theory with $\omega + n$ Woodin cardinals,
 along with a sharp for the model. In fact, he proved this result for
 analytic games of any fixed countable length
 $\omega \cdot (\theta + 1)$, from corresponding assumptions. This
 naturally yields the question whether his results are optimal, i.e.,
 whether the determinacy of these long games implies the existence of
 models with certain numbers of Woodin cardinals. In light of this, we
 prove the following theorem:
  
  \begin{theorem}\label{TheoremLongIntro}
    Suppose that $\PI^1_{n+1}$-determinacy for games of length
    $\omega^2$ holds and let $x \in\BS$ be arbitrary. Then, there is a
    proper class model $M$ of $\zfc$ with $\omega+n$ Woodin cardinals
    such that $x \in M$.
  \end{theorem}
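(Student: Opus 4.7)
My plan follows the two-step strategy announced in the abstract. The first step is to produce, from $\PI^1_{n+1}$-determinacy of games of length $\omega^2$, a countable set of reals $A$ with $x \in A$ such that $M_n(A)$ exists, is fully iterable, and satisfies $A = \bR^{M_n(A)} \wedge \ad$. The second step extracts from $M_n(A)$ a proper class model of $\zfc$ with $\omega + n$ Woodin cardinals containing $x$.

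For the first step, I would set up an auxiliary length-$\omega^2$ game of $\PI^1_{n+1}$ payoff type. The initial $\omega$-block is used by Player I to issue a challenge---for instance, candidate data purporting to witness a failure of iterability, or of $\ad$, in the eventual $M_n(A)$---while the subsequent $\omega$-blocks are used by Player II to enumerate the reals of $A$, to supply $M_n^\#$-certificates for each played $a \in A$, and to certify that the resulting $M_n(A)$ is internally correct. The payoff is arranged so that Player II wins iff the enumerated structure successfully refutes Player I's challenge; using the standard projective description of the $M_n^\#$-operator over countable sets of reals, the payoff can be kept within $\PI^1_{n+1}$. The determinacy hypothesis then produces a winning strategy for one of the players, and a reflection argument---carried out by running $M_n^\#$-constructions relative to an arbitrary Player I strategy---should exclude a winning strategy for I. Following Player II's winning strategy against a suitable enumeration produced by I then yields the desired $A$.

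For the second step, I would exploit that $M_n(A) \models \ad$ with $A = \bR^{M_n(A)}$. The internal $\ad$-structure of $M_n(A)$ produces $\omega$ Woodin cardinals below $\Theta^{M_n(A)}$ (via the $\hod$-analysis in $\ad$-models, or equivalently via the core model induction that underlies the equivalence of $\ad^{L(\bR)}$ with an inner model with $\omega$ Woodins), while the $n$ Woodin cardinals of $M_n(A)$ sit above. Concatenating gives a set-sized model with $\omega + n$ Woodins containing $x$, and a class-length iteration of $M_n^\#(A)$, available from iterability of the $M_n^\#$-operator, stretches this to a proper class model of $\zfc$ with the same property.

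The principal obstacle is the first step. Calibrating the auxiliary game so that the payoff remains within $\PI^1_{n+1}$ while still encoding iterability of $M_n(A)$ and the internal truth of $\ad$ requires delicate use of projective descriptions of $M_n^\#$ over arbitrary countable sets of reals; and excluding a Player I winning strategy requires a nontrivial extension of the tree-production arguments of \cite{AMS} from the $\PI^1_2$ case to higher projective levels, the principal difficulty being the threading of iterability verification through $n$ layers of Woodin cardinals.
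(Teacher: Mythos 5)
Your two-step skeleton matches the paper's (it is the one announced in the abstract), but both steps are missing the ideas that actually make them work. In Step 1, the engine of the paper's argument is a Friedman/Kechris--Solovay style game in which Player I plays the \emph{complete theory} of a minimal $\Pi^1_{n+1}$-iterable premouse over the set of reals $X=\{x_i\}$ that the two players jointly enumerate, a premouse asserting ``$\dot X=\bR \wedge \neg\ad$,'' together with auxiliary integer moves $a,b$ in the first $\omega$-block and a clause $\psi$ forcing $a\oplus b$ into the least non-determined set definable from the parameter $x_0$. One then argues by contradiction: assuming $M_n(A)\not\models\ad$ stationarily often, a winning strategy for \emph{either} player in the long game, reflected to a countable hull $W$, induces a winning strategy for that alleged non-determined set inside the model built over $\bR^W$ --- so neither player can win, contradicting determinacy. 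Your version, in which Player II must positively certify iterability and $\ad$ of the structure it enumerates, does not explain where those certificates come from; that is precisely the content of the argument, not a detail. Two further ingredients you omit are genuinely needed: a separate game (the paper's Lemma \ref{LemmaRealsStationary}) establishing $M_n(A)\cap\bR=A$ on a club, and the upgrade from $\Pi^1_{n+1}$-iterability (which is all the payoff can express) to $\omega_1$-iterability, obtained by comparing the played premouse against the genuinely iterable $M_n(\bR^W)|\delta$.

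Step 2 as you describe it would fail. The $\hod$-analysis does not produce $\omega$ Woodin cardinals below $\Theta^{M_n(A)}$: in this minimal model $\Theta=\theta_0$, so the Solovay sequence is trivial and $\hod$ contributes essentially one Woodin at $\Theta$. The $\omega$ Woodin cardinals instead come from a Prikry-type forcing over the Martin measure on Turing degrees, producing a sequence $\cQ_n$ of local models obtained by simultaneous pseudo-comparison and pseudo-genericity iteration, whose heights $\delta_n$ are Woodin in $L[\cQ_\infty]$. More seriously, ``concatenating'' the $n$ Woodins of $M_n(A)$ on top is the hard part, not an afterthought: the $\delta_n$ are countable ordinals of $M_n(A)$ and are certainly not Woodin there, and the Woodin of $M_n(A)$ is not obviously Woodin over $L[\cQ_\infty]$. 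The paper bridges this by computing the derived model of $\cQ_\infty$ ($\bR^*_h=\bR^V$ and $\Hom^*_h=\DELTA^2_1$), deducing the agreement $L[\cQ_\infty][h]=L(U,A)[G][h]$, and then running a $\cP$-construction that transplants the extenders of $M_n(A)$ onto $\cQ_\infty$ while verifying that no level projects below $\lambda=\sup_n\delta_n$, so that the $\delta_n$ stay Woodin. Without some substitute for this translation procedure, your set-sized model with $\omega+n$ Woodin cardinals does not exist.
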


  The Woodin cardinals of the model we construct in the proof of
  Theorem \ref{TheoremLongIntro} are in reality countable. Moreover,
  the model is a premouse, i.e., fine structural, and we can in fact
  construct it such that it is active, i.e., it has a sharp on top. As
  a corollary of the theorem and the results from \cite{Ne04}, we
  obtain the following equiconsistencies, which also connect
  projective determinacy for games of length $\omega^2$ to projective
  determinacy for games on reals.\footnote{We would like to thank the
    referee for asking whether \eqref{eq:3} and \eqref{eq:4} could be
    added to Corollary \ref{TheoremLongPD}; see also \cite{AgMu}.}

  \begin{corollary}\label{TheoremLongPD}
    The following schemata are equiconsistent:
    \begin{enumerate}
    \item $\zfc$ + $\{`` \PI^1_n$-determinacy for games of length
      $\omega^2$ on $\omega$'' $: n\in\omega\}$. \label{eq:2}
    \item $\zf$ + $\dc$ + $\ad$ + $\{``$there are $n$ Woodin
      cardinals''$: n\in\omega\}$. \label{eq:1.5}
    \item $\zfc$ + $\{``$there are $\omega+n$ Woodin
      cardinals''$: n\in\omega\}$. \label{eq:1}
    \item $\zf$ + $\dc$ + $\ad$ + $\{`` \PI^1_n$-determinacy for games of length
      $\omega$ on $\bR$'' $: n\in\omega\}$.\label{eq:3}
    \item $\zf$ + $\dc$ + $\ad$ +
      $\{`` V^{\col(\omega,\bR)} \vDash \PI^1_n$-determinacy for games
      of length $\omega$ on $\omega\text{''} : n\in\omega\}$.\label{eq:4}
    \end{enumerate}
  \end{corollary}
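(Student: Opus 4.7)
The plan is to set up a cycle of consistency implications among the five schemata, pivoting on the equivalence of \eqref{eq:2} and \eqref{eq:1}, which comes directly from Theorem \ref{TheoremLongIntro} together with Neeman's results. To see that \eqref{eq:2} is at least as consistent as \eqref{eq:1}, I would apply Theorem \ref{TheoremLongIntro} levelwise: for each $n$, $\PI^1_{n+1}$-determinacy for games of length $\omega^2$ yields a proper class $\zfc$-model with $\omega+n$ Woodin cardinals, so the consistency of each finite fragment of \eqref{eq:2} gives the consistency of the corresponding fragment of \eqref{eq:1}. For the converse, I would invoke \cite[Theorem 2A.3]{Ne04}: from a weakly iterable model with $\omega+n$ Woodin cardinals and a sharp on top, Neeman derives analytic determinacy for games of length $\omega\cdot(\omega+n)$, which implies $\PI^1_{n+1}$-determinacy for length-$\omega^2$ games by the folklore argument mentioned in the introduction. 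The sharp can be absorbed into the scheme by passing to $\omega+n+1$ Woodin cardinals, or extracted from the active premouse produced by Theorem \ref{TheoremLongIntro}.

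To bring \eqref{eq:1.5} into the picture, I would use the standard correspondences between $\ad$ and inner models with Woodin cardinals. In the direction from \eqref{eq:1} to the consistency of \eqref{eq:1.5}, apply the Derived Model Theorem at the supremum $\lambda$ of the first $\omega$ Woodin cardinals in a $\zfc$-model with $\omega+n$ Woodins: the derived model at $\lambda$ satisfies $\zf+\dc+\ad$, and the $n$ Woodins above $\lambda$ survive into it. For the converse, I would run the $\hod$-style analysis inside an $\ad$-model: $\ad$ alone already produces $\omega$ Woodin cardinals in an $\hod$-type inner model, and the $n$ extra Woodins of \eqref{eq:1.5} extend this to a $\zfc$-model with $\omega+n$ Woodin cardinals.

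To absorb \eqref{eq:3} and \eqref{eq:4}, I would use the coding of length-$\omega$ games on reals as length-$\omega^2$ games on $\omega$: each real move unpacks into $\omega$ natural-number moves, and under this coding a projective payoff for a real game matches a projective payoff for a length-$\omega^2$ game after a careful levelwise analysis of complexity. Absoluteness between an $\ad$-model and its $\col(\omega,\bR)$-extension then links \eqref{eq:4} with \eqref{eq:3} directly. This full levelwise translation, together with the match-up of Woodin cardinal counts, is the content of \cite{AgMu}, as noted in the footnote.

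The main obstacle is therefore already isolated in Theorem \ref{TheoremLongIntro}: producing the inner model with $\omega+n$ Woodin cardinals from projective determinacy of length-$\omega^2$ games is the deep step. Once that theorem is in hand, the other directions are applications of established machinery (Neeman's upper bound, the Derived Model Theorem, $\hod$-analysis, and game-coding), and one only has to verify that the level shifts can be absorbed into each scheme so that the cycle of implications closes consistency-wise.
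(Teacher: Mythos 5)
Your outline has the right overall shape (a cycle of consistency implications, with Theorem \ref{TheoremLongIntro} and Neeman's Theorem 2A.3 handling the passage between \eqref{eq:2} and \eqref{eq:1}), but it treats as ``established machinery'' exactly the step the paper has to work for. The direction from \eqref{eq:1.5} to \eqref{eq:1} is not a routine $\hod$ analysis: under $\ad$, $\hod$-type models give you a single Woodin cardinal ($\Theta$), not $\omega$ of them, and the actual argument is the Prikry-like forcing over the Martin measure on Turing degrees followed by a $\cP$-construction that re-attaches the $n$ Woodins of the ambient $\ad$ model on top of the $\omega$ new ones (Section \ref{SectOmega+1WdnsFromAD}). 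Running that machinery requires the $\ad$ model to be of the special form $M_n(A)$ with $M_n(A)\cap\bR=A$ and to satisfy $\dc$, the scale property for $\Sigma^2_1$, $\Theta=\theta_0$, and Mouse Capturing (Section \ref{SectionDC}); an arbitrary model of $\zf+\dc+\ad$ with $n$ Woodins has none of this for free. The paper bridges the gap by performing a fully backgrounded extender construction inside the given model (spending finitely many extra Woodin cardinals, which is harmless for a schema) to produce $M_n(\bR^M)$ with $M_n(\bR^M)\cap\bR = \bR^M$, and only then invoking Section \ref{SectOmega+1WdnsFromAD}. Your proposal skips this reduction entirely. Relatedly, your arrow into \eqref{eq:1.5} goes through the Derived Model Theorem and asserts that the $n$ Woodins above $\lambda$ ``survive'' into $L(\bR^*,\Hom^*)$; Woodinness is not automatically inherited by that inner model, and this needs an argument. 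The paper avoids the issue by obtaining \eqref{eq:1.5} directly from \eqref{eq:2} via Theorem \ref{TheoremADM1} together with Theorem \ref{thm:M1DC}.

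For \eqref{eq:3} and \eqref{eq:4}, the direct game-coding you describe does not close the loop, because the schemata live over different base theories: \eqref{eq:2} is a $\zfc$ statement while \eqref{eq:3} and \eqref{eq:4} are statements over $\zf+\dc+\ad$, so unpacking real moves into $\omega$-blocks of integer moves only relates games inside one fixed model and by itself says nothing about consistency across the two base theories. The paper instead extracts mice: from \eqref{eq:3} it adapts the games of Section \ref{SectADinM1} to show that $M_n^\sharp(\bR)$ exists in the $\ad$ model, which then witnesses \eqref{eq:1.5} since strategies for integer games are coded by reals; conversely it works inside $M_{n+4}(A)$ produced from \eqref{eq:2} and uses Neeman's proof of determinacy from $M_{n+3}^\sharp(\bR)$ to verify determinacy of length-$\omega$ games on $\bR$. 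The link with \eqref{eq:4} is made by noting that $\bR^V$ becomes countable in the collapse extension $V[g]$, so the hypothesis there yields $M_{n+1}^\sharp(\bR^V)$, which is pulled back to $V$ by homogeneity of the collapse. Deferring details to \cite{AgMu} is reasonable given the footnote, but describing its content as a complexity-preserving coding of games is not what is actually needed or what the paper does.
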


  The proof of Theorem \ref{TheoremLongIntro} has two main parts: in
  the first one, the hypothesis is shown to imply that for a closed
  and unbounded set of countable sets of reals $A$, there is a fine
  structural model of the Axiom of Determinacy with $n$ Woodin
  cardinals whose set of real numbers is precisely $A$. In the second
  part we use a Prikry-like partial order to force over these models
  and obtain via a translation procedure an infinite sequence of
  Woodin cardinals below the already existing ones. The proof of
  Corollary \ref{TheoremLongPD} is sketched in the final section of
  this paper.\\

  \paragraph{\textbf{Background}} The situation for games of length
  $\omega$ is well understood: There is a tight connection beween
  determinacy for these games and the existence of inner models with
  large cardinals. Martin \cite{Ma75} showed that Borel games are
  determined in $\zfc$. Contrary to that, determinacy for $\SIGMA^1_1$
  (i.e., analytic) games cannot be proved in $\zfc$ alone. By theorems
  of Martin \cite{Ma70} and Harrington \cite{Ha78} $\SIGMA^1_1$ games
  are determined if, and only if, $x^\sharp$ exists for every
  $x\in\BS$. Martin and Steel \cite{MaSt89} proved, for each
  $n\in\omega$, that the existence of $n$ Woodin cardinals with a
  measurable cardinal above implies the determinacy of all
  $\SIGMA^1_{n+1}$ sets. Woodin (unpublished) improved this for odd
  $n$ by showing that the existence of $M_n^\sharp(x)$, the canonical
  active $\omega_1$-iterable inner model with $n$ Woodin cardinals
  constructed over $x$, for all reals $x$ suffices to show
  $\SIGMA^1_{n+1}$ determinacy. Afterwards, Neeman improved this in
  \cite{Ne95} even further and showed that for all $n$, the existence
  of $M_n^\sharp(x)$ for all reals $x$ implies determinacy of all
  $\Game^n({<}\omega^2-\PI^1_1)$ sets. Concerning the other direction,
  Woodin (see \cite{MSW}) showed that if $\SIGMA^1_{n+1}$ games are
  determined, then $M_n^\sharp(x)$ exists for all reals $x$, thus
  establishing a level-by-level characterization of projective
  determinacy in terms of the existence of inner models with large
  cardinals. Similar characterizations are known for
  $\sigma$-projective games of length $\omega$ (see \cite{Ag} and
  \cite{AMS}). Determinacy for games of length $\omega$ with payoff in
  $\Game^\bR \PI^1_1$ is equivalent to $\ad^{L(\bR)}$ (see
  \cite{MaSt08}).

  Woodin showed that the existence of $\omega^{2}$ Woodin cardinals
  under choice is equiconsistent with $\ad^+ + \dc$ and the existence
  of a normal fine measure on $\ctbleset$ (see Remark 9.98 in
  \cite{Wo10}). By this and a result of Trang (see \cite[Theorem
  2.3.11]{Tr13}, case $\alpha = 2$), determinacy of analytic games of
  length $\omega^{3}$ implies the consistency of $\omega^{2}$ Woodin
  cardinals under choice. In fact, a similar result holds for
  $\omega^\alpha$ Woodin cardinals and determinacy of analytic games
  of length $\omega^{1+\alpha}$ for all $1 < \alpha < \omega$ and all
  limit ordinals $\omega \leq \alpha < \omega_1$. For details see
  Theorems 2.2.1 and 2.3.11 in \cite{Tr13}, as well as \cite{Tr14} and
  \cite{Tr15}.\\

  \paragraph{\textbf{Further results}} The proof of Theorem
  \ref{TheoremLongIntro} uses game arguments based on techniques from
  Martin and Steel \cite{MaSt08} and \cite{MSW}, tracing back to
  arguments of H. Friedman \cite{Fr71} and Kechris and Solovay
  \cite{KS85} as well as inner model theoretic methods based on the
  unpublished notes \cite{St}. The method of the proof of Theorem
  \ref{TheoremLongIntro} can also be used to show the following two
  generalizations:

 \begin{theorem}\label{TheoremLongomega+omegaIntro}
   Suppose that $\Game^\bR\PI^1_1$-determinacy for games of length
   $\omega^2$ holds and let $x \in\BS$ be arbitrary. Then, there is a
   proper class model $M$ of $\zfc$ with $\omega+\omega$ Woodin
   cardinals such that $x \in M$.
 \end{theorem}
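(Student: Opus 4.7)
The plan is to follow the same two-step strategy used for Theorem~\ref{TheoremLongIntro}, systematically replacing the projective pointclass $\PI^1_{n+1}$ throughout by $\Game^\bR\PI^1_1$ and, correspondingly, the fine-structural witness with $n$ Woodin cardinals by one with $\omega$ Woodin cardinals. The natural target is suggested by the Martin--Steel equivalence between $\Game^\bR\PI^1_1$-determinacy for games of length $\omega$ and $\ad^{L(\bR)}$ \cite{MaSt08}: at length~$\omega$, the appropriate level is a premouse at the $L(\bR)$-level with $\omega$ Woodin cardinals, and adding $\omega$ new Woodin cardinals below them via the translation will yield $\omega+\omega$ in total.

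In the first step, I would adapt the game-theoretic construction from the proof of Theorem~\ref{TheoremLongIntro} to show that $\Game^\bR\PI^1_1$-determinacy for games of length $\omega^2$ implies that for a closed unbounded set of countable $A\subseteq\bR$ there is a fine-structural iterable premouse $\cM_A$ with $\bR^{\cM_A}=A$ satisfying $\ad$ together with ``there are $\omega$ Woodin cardinals.'' The auxiliary $\omega^2$-game would have the two players cooperate to build $A$ and a play of the hypothesized game, subject to a correctness side-condition demanding that the candidate mouse correctly compute $\Game^\bR\PI^1_1$-truth about its reals; this replaces the $M_n$-correctness at the $\PI^1_{n+1}$ level used in the proof of Theorem~\ref{TheoremLongIntro} by $M_\omega$-style correctness at the $\Game^\bR\PI^1_1$ level.

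In the second step, the Prikry-like forcing and translation from the proof of Theorem~\ref{TheoremLongIntro} apply to each such $\cM_A$ essentially verbatim: they operate below the existing $\omega$ Woodin cardinals of $\cM_A$ and, through the translation procedure, introduce $\omega$ new Woodin cardinals beneath them. Combined with the stacking and absoluteness arguments used there, this produces a proper class model of $\zfc$ with $\omega+\omega$ Woodin cardinals containing the chosen real~$x$.

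The main obstacle is the first step: upgrading the fine-structural and iterability machinery from the projective $M_n$-setting to the $L(\bR)$-level $M_\omega$-setting. One must verify the correct form of $M_\omega$-correctness at $\Game^\bR\PI^1_1$, establish iterability of the relevant candidate mice with $\omega$ Woodin cardinals (in particular the background constructions needed to realize the winning side of the auxiliary game), and check that the genericity-iteration and coding arguments continue to work when the payoff complexity is $L(\bR)$-level rather than projective. Once this correctness-and-iterability package is in place, the game-theoretic skeleton of the proof of Theorem~\ref{TheoremLongIntro} and the subsequent Prikry translation to $\omega+\omega$ Woodin cardinals go through by direct analogy.
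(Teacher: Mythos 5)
Your proposal follows essentially the same route as the paper: there, Theorem \ref{TheoremLongomega+omegaIntro} is obtained by rerunning the proof of Theorem \ref{TheoremADM1} with $1$-smallness replaced by $\omega$-smallness and $\Pi^1_2$-iterability replaced by weak ($\Game^\bR\Pi^1_1$-)iterability, so that the payoff of the auxiliary game lands in $\Game^\bR\PI^1_1$ and the comparison arguments go through, after which the Prikry-type forcing and $\cP$-construction of Section \ref{SectOmega+1WdnsFromAD} are applied unchanged to $M_\omega(A)$. The only point where you are vaguer than the paper is the side condition in the game: the paper imposes the definable iterability notion at the right level (weak iterability) rather than a ``correct computation of $\Game^\bR\PI^1_1$-truth'' requirement, and this choice is exactly what controls the complexity of the payoff set.
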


 As above, it is not hard to show that determinacy of analytic games
 of length $\omega\cdot(\omega+\omega)$ implies determinacy of
 $\Game^\bR \PI^1_1$ games of length $\omega^2$. Our methods also
 generalize to games with $\sigma$-projective payoff and premice of
 class $S_\alpha$ (see \cite{Ag} and \cite[Definition 4.1]{AMS}).

\begin{theorem}\label{TheoremLongSalphaIntro}
  Suppose that $\sigma$-projective determinacy for games of length
  $\omega^2$ holds and let $x \in\BS$ be arbitrary. Then, for every
  $\alpha < \omega_1$, there is a proper class model $M$ of $\zfc$
  with $\omega$ Woodin cardinals with supremum $\lambda$ which is of
  class $S_\alpha$ above $\lambda$ and such that $x \in M$.
\end{theorem}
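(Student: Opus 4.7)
The plan is to carry out the two-step strategy of Theorem \ref{TheoremLongIntro} with the inner model theory of $S_\alpha$-premice, as developed in \cite{AMS}, in place of the inner model theory of $M_n$-premice. Fix $\alpha < \omega_1$ and $x \in \BS$.

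The first step is to prove, using the hypothesis of $\sigma$-projective determinacy for games of length $\omega^2$, an analogue of the first half of the proof of Theorem \ref{TheoremLongIntro}: namely, that there is a countable set of reals $A$ with $x \in A$ such that the canonical $S_\alpha$-model constructed over $A$ is a premouse of class $S_\alpha$ satisfying $A = \bR$ and the Axiom of Determinacy. The game arguments from \cite{MaSt08} and \cite{MSW}, in the tradition of \cite{Fr71} and \cite{KS85}, which are used in the main theorem to extract the existence of a fine-structural model of $\ad$ from $\PI^1_{n+1}$-determinacy for games of length $\omega^2$, should transfer essentially without change. The only modification is to replace the complexity computations for $\PI^1_{n+1}$-sets and the appeals to $M_n^\sharp$ by the corresponding computations for the $\alpha$-th level of the $\sigma$-projective hierarchy and the existence of $S_\alpha$-mice, both available uniformly in $\alpha$ under the hypothesis by \cite{Ag} and \cite{AMS}.

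The second step is to apply the Prikry-like forcing and translation procedure from the second half of the proof of Theorem \ref{TheoremLongIntro} to the $S_\alpha$-premouse $N$ produced above. The forcing only adds a sequence of Prikry points below the large cardinal structure of $N$, and the translation then produces a premouse in which $\omega$ new Woodin cardinals appear below the pre-existing structure. These new Woodins have some countable supremum $\lambda$, and, since neither the forcing nor the translation touches the fine structure above the relevant extender sequence, the resulting premouse is of class $S_\alpha$ above $\lambda$. Iterating out, one obtains a proper class model $M$ of $\zfc$ with the desired properties.

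The main obstacle will be verifying that the translation procedure from \cite{St}, which is originally formulated for premice whose upper structure consists of finitely many Woodin cardinals topped by a measurable, continues to work when that upper structure is replaced by the more intricate $S_\alpha$-hierarchy, and that the iterability needed at intermediate stages of the translation survives in the $\sigma$-projective setting. Once these inner-model-theoretic compatibility checks are carried out, the remaining arguments are formally identical to those for Theorem \ref{TheoremLongIntro}, so that $\alpha$ enters only through the bookkeeping of the fine structure above $\lambda$.
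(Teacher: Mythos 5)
Your proposal matches the paper's own (very brief) proof, which likewise proceeds by rerunning the two-step argument for Theorem \ref{TheoremLongIntro} with 1-smallness replaced by ``not of class $S_\alpha$'' in the definition of $\varphi$-witness, $\Pi^1_2$-iterability replaced by $\Pi^1_\alpha$-iterability from \cite{Ag}, the comparison lemmas supplied by \cite{Ag}, and the forcing/$\cP$-construction of Sections \ref{SectionDC} and \ref{SectOmega+1WdnsFromAD} generalized to put the $S_\alpha$-structure on top. The compatibility checks you flag as the main obstacle are exactly the points the paper declares to generalize straightforwardly, so the two arguments are essentially identical.
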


\paragraph{\textbf{Outline}} In Section \ref{SectionPreliminaries}, we
establish conventions and recall some known facts about extender
models which will be used later on. Focusing first on the case
$n = 1$, Section \ref{SectADinM1} contains the main argument and shows
that determinacy for games of length $\omega^2$ with $\PI^1_2$ payoff
implies the existence of fine structural models of the Axiom of
Determinacy with one Woodin cardinal. In Section \ref{SectionDC}, we
argue that this fine structural model satisfies $\dc$ and $\ad^+$. In
Section \ref{SectOmega+1WdnsFromAD}, we show how to obtain a model of
$\zfc$ with $\omega+1$ Woodin cardinals from this model. Finally, in
Section \ref{SectionRemaining} we sketch the proof of Corollary
\ref{TheoremLongPD} and explain how to carry out the modifications
needed to prove Theorems \ref{TheoremLongomega+omegaIntro} and
\ref{TheoremLongSalphaIntro}. \\

We would like to thank John Steel for valuable discussions related to
this paper, in particular to Lemma \ref{LemmaRealsStationary}, and for
making the unpublished notes \cite{St} available to us. Moreover, we
would like to thank Grigor Sargsyan for his helpful comments on an
earlier version of this manuscript. Finally, we would like to thank
the referee for carefully reading our paper and making several
valuable suggestions.

\section{Preliminaries}\label{SectionPreliminaries}

For basic set theoretic definitions and results we refer to
\cite{Ka08} and \cite{Mo09}. Moreover, we work with canonical, fine
structural models with large cardinals, called \emph{premice}. We
refer the reader to e.g., \cite{St10} for an introduction, and to
\cite{MS94}, \cite{SchStZe02}, and \cite{St08b} for additional
background. In particular, we will use Mitchell-Steel indexing for
extender sequences and the notation from \cite{St10}. As in
\cite{St08b} we will consider relativized premice constructed over
arbitrary transitive sets $X$. Let
$\lpm = \{\dot \in, \dot E, \dot F, \dot X\}$ denote the language of
relativized premice, where $\dot E$ is the predicate for the extender
sequence, $\dot F$ is the predicate for the top extender, and $\dot X$
is the predicate for the set over which we construct the premouse.

For a transitive set $X$, we say an \emph{$X$-premouse}
$M = (J_\alpha^{\vec{E}}, \in, \vec{E}, E_\alpha, X)$ for
$\vec{E} = (\dot E)^M$, $E_\alpha = (\dot F)^M$, and $X = (\dot X)^M$
is \emph{active} if $E_\alpha \neq \emptyset$. Otherwise, we say $M$
is \emph{passive}. We let
$M | \gamma = (J_\gamma^{\vec{E}}, \in, \vec{E} \upharpoonright
\gamma, E_\gamma, X)$ for $\gamma \leq M \cap \Ord$. Moreover, we
write $M || \gamma$ for the passive initial segment of $M$ of height
$\gamma$, i.e.
$M || \gamma = (J_\gamma^{\vec{E}}, \in, \vec{E} \upharpoonright
\gamma, \emptyset, X)$, for $\gamma \leq M \cap \Ord$. In particular,
$M||\Ord^M$ denotes the premouse $N$ which agrees with $M$ except that
we let $(\dot F)^N = \emptyset$. We say an ordinal $\eta$ is a
\emph{(strong) cutpoint} of $M$ if there is no extender $E$ on the
$M$-sequence with $\crit(E) \leq \eta \leq \lh(E)$.

An arbitrary $X$-premouse might not satisfy the Axiom of Choice, but
it can be construed as an ordinary premouse which satisfies the Axiom
of Choice if a well-order of $X$ is added generically. In particular,
every $X$-premouse $M$ is \emph{well-ordered mod $X$}, i.e., for every
set $Y \in M$ there is an ordinal $\alpha$ and a surjection $h$ in $M$
such that $h \colon X \times \alpha \twoheadrightarrow Y$. In this
article we shall mainly be interested in $X$-premice which satisfy the
Axiom of Determinacy (and hence not the full Axiom of Choice), where
$X \in \ctbleset$, i.e., $X$ is a countable set of reals.

Throughout this article, we work under the assumption that all premice
are tame, i.e., that there is no extender on the sequence of a
premouse overlapping a Woodin cardinal. This results in no loss of
generality, as otherwise the conclusions of the theorems in Section
\ref{SectIntro} hold.

In the rest of this section we summarize some facts about premice
which we are going to need later. Most of these can be found in
\cite{MSW} and \cite{Uh16} for $x$-premice for a real $x$ and they
straightforwardly generalize to $X$-premice for arbitrary countable
sets of reals $X$. Fix such a set $X \in \ctbleset$ for the rest of this section.

\begin{definition}
  Let $M$ be an $X$-premouse and let $\alpha<\omega_1^L$. Then we say
  that $M$ is \emph{$\alpha$-small} if, and only if, for every ordinal
  $\kappa \leq M \cap \Ord$ such that there is an extender with
  critical point $\kappa$ on the $M$-sequence, in $M | \kappa$ there
  is no set of ordinals $W$ of order-type $\alpha$ such that every
  ordinal in $W$ is a Woodin cardinal in $M | \kappa$.
\end{definition}

If it exists and is unique, we denote the $\omega_1$-iterable,
countable, sound $X$-premouse which is not $\alpha$-small, but all of
whose proper initial segments are $\alpha$-small, by
$M_\alpha^\sharp(X)$. In this case $M_\alpha(X)$ denotes the result of
iterating the top most measure of $M_\alpha^\sharp(X)$ and its images
out of the universe.

We will now argue that under certain conditions there is a comparison
lemma for $n$-small $X$-premice. This will be used in the proof of
Theorem \ref{TheoremADM1}. For premice over a real $x$ this can be
found for example in \cite[Lemma 2.11]{MSW}; the argument there
generalizes to premice over countable sets of reals. For the reader's
convenience we will briefly sketch the main ideas and recall the
statements. The following notion will be important in what follows to
ensure that $\cQ$-structures exist.

\begin{definition}\label{def:notdefWdn}
  Let $M$ be a sound $X$-premouse and let $\delta$ be a cardinal in $M$ or
  $\delta = M \cap \Ord$. We say that \emph{$\delta$ is not definably
    Woodin over $M$} if, and only if, there exists an ordinal
  $\gamma \leq M \cap \Ord$ such that $\gamma \geq \delta$ and either
  \begin{enumerate}[$(i)$]
  \item over $M|\gamma$ there exists an $r\Sigma_n$-definable set
    $A \subset \delta$ for some $n<\omega$ such that for no
    $\kappa < \delta$ do the extenders on the $M$-sequence witness
    that $\kappa$ is strong up to $\delta$ with respect to $A$, or
\item $\rho_n(M|\gamma) < \delta \text{ for some } n < \omega.$
  \end{enumerate}
\end{definition} 

For several iterability arguments to follow we need our premice to
satisfy the following property, which, as a fine structural argument shows,
is preserved during iterations. This in turn ensures that
$\cQ$-structures exist in iterations of a premouse $M$ satisfying it. 

\begin{definition}\label{def:nodefWdns}
  Let $M$ be a sound $X$-premouse. We say $M$ has \emph{no definable Woodin
    cardinals} if, and only if, for all $\delta \leq M \cap \Ord$ we
  have that $\delta$ is not definably Woodin over $M$.
\end{definition}

The proof of \cite[Lemma 2.11]{MSW} generalizes to $X$-premice and
shows the following lemma. Here the hypothesis that $M_n^\sharp(x)$
exists for all $x \in \bR$ is used to compare the countable premice
$M$ and $N$ inside the model $M_n^\sharp(x)$ for a real $x$ coding $M$
and $N$. The fact that the iteration strategies for $M$ and $N$ are
guided by $\cQ$-structures ensures that their restriction to trees in
$H^{M_n^\sharp(x)}_{\delta_x}$, where $\delta_x$ is the least Woodin
cardinal in $M_n^\sharp(x)$, is in $M_n^\sharp(x)$ and hence
$\omega_1^V$-iterability suffices for comparison in this situation.

\begin{lemma}\label{lem:comomega1it}
  Suppose that $M_n^\sharp(x)$ exists for all $x \in \bR$. Let $M$ and
  $N$ be countable $\omega_1$-iterable sound $X$-premice such that
  every proper initial segment of $M$ and $N$ is $n$-small and they
  both do not have definable Woodin cardinals. Then there are iterates
  $M^*$ and $N^*$ of $M$ and $N$ respectively such that one of the
  following holds:
\begin{enumerate}
\item $M^*$ is an initial segment of $N^*$ and there is no drop on the
  main branch in the iteration from $M$ to $M^*$,
\item $N^*$ is an initial segment of $M^*$ and there is no drop on the
  main branch in the iteration from $N$ to $N^*$.
\end{enumerate}
\end{lemma}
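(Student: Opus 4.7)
The plan is to reduce the comparison of $M$ and $N$ to one carried out inside $M_n^\sharp(x)$, where $x$ is a real coding both premice, exploiting the least Woodin cardinal of that model (which is countable in $V$) to bound the length of the coiteration.

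First, I would fix a real $x$ with $M, N \in L_{\omega_1}[x]$ and, by hypothesis, form $M_n^\sharp(x)$. Write $\delta_x$ for its least Woodin cardinal; note that $\delta_x$ is countable in $V$. The aim is to run the coiteration of $M$ and $N$ inside $M_n^\sharp(x)$ using iteration strategies that live in $M_n^\sharp(x)$, at least up to trees of size ${<}\delta_x$.

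Second, I would exploit the hypothesis that $M$ and $N$ have no definable Woodin cardinals. A standard fine-structural argument (as in \cite{MSW}) shows that this property is preserved under iteration, so every non-dropping iterate of $M$ or $N$ also lacks definable Woodin cardinals. This implies that the unique cofinal branch of any putative iteration tree $\cT$ on $M$ (or $N$) of countable limit length is the one whose $\cQ$-structure $\cQ(b,\cT)$ is a sound, $\omega_1$-iterable, $n$-small $\cM(\cT)$-premouse with no definable Woodin cardinals extending $\cM(\cT)$. Because the proper initial segments of $M$ and $N$ are $n$-small, these $\cQ$-structures are themselves $n$-small and arise as initial segments of the output of the fully-backgrounded $L[\vec{E}]$-construction performed inside $M_n^\sharp(x)$ below $\delta_x$. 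Hence $\Sigma_M \upharpoonright H_{\delta_x}^{M_n^\sharp(x)}$ and $\Sigma_N \upharpoonright H_{\delta_x}^{M_n^\sharp(x)}$ both belong to $M_n^\sharp(x)$.

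Third, I would carry out the coiteration of $M$ and $N$ inside $M_n^\sharp(x)$ using these restricted strategies. By the usual argument, the extenders used have strictly increasing lengths, so since $\delta_x$ is inaccessible in $M_n^\sharp(x)$, the process terminates at some stage $\gamma < \delta_x$. The resulting iterates $M^*$ and $N^*$ compare, and the $\omega_1^V$-iterability of $M$ and $N$ guarantees that this trees in $H_{\delta_x}^{M_n^\sharp(x)}$ really are by the $V$-strategies, so the comparison is genuine. The standard universality/soundness argument then forces one of $M^*, N^*$ to be an initial segment of the other, with no drop on the main branch on the shorter side, giving conclusion (1) or (2). The main obstacle, and the point where the proof of \cite[Lemma 2.11]{MSW} genuinely needs to be checked to carry over to the $X$-premouse setting, is the internalization step: verifying that the relevant $\cQ$-structures are correctly identified inside $M_n^\sharp(x)$, i.e., that the $n$-smallness and no-definable-Woodin properties are absolute between $V$ and $M_n^\sharp(x)$ for $X$-premice. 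Once this is in place, the Woodinness of $\delta_x$ handles termination in the usual way.
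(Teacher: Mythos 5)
Your proposal is correct and takes essentially the same route as the paper, which itself only sketches the argument by deferring to \cite[Lemma 2.11]{MSW}: code $M$ and $N$ by a real $x$, coiterate inside $M_n^\sharp(x)$ using the fact that the $\cQ$-structure--guided strategies restricted to $H^{M_n^\sharp(x)}_{\delta_x}$ belong to $M_n^\sharp(x)$, so that $\omega_1^V$-iterability suffices for the comparison to succeed. Your additional remarks on identifying the $\cQ$-structures via the backgrounded construction below $\delta_x$ and on termination below $\delta_x$ are exactly the details one must check when transporting the MSW argument to $X$-premice, and they are consistent with what the paper intends.
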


      In the statement of Lemma \ref{lem:comomega1it} the assumption
      that $M$ and $N$ do not have definable Woodin cardinals can be
      replaced by the assumption that $M$ and $N$ do not have Woodin
      cardinals; see the remark after the proof of Lemma 2.11 in
      \cite{MSW}.

      There is a variant of this lemma for the case that one of the
      premice is only $\Pi^1_n$-iterable as introduced in Definitions
      1.4 and 1.6 in \cite{St95}. In this case, the last model of the
      iteration tree on that premouse need not be fully well-founded,
      but the argument from \cite[Lemma 2.2]{St95} (see also Corollary
      2.15 in \cite{MSW}) yields that we still have a comparison
      lemma.

\begin{lemma}\label{lem:coitPi12omega1}
  Suppose that $M_n^\sharp(x)$ exists for all $x \in \bR$. Let $M$ and
  $N$ be countable $n$-small sound solid $X$-premice which both do
  not have Woodin cardinals. Moreover, assume that $M$ is
  $\omega_1$-iterable and $N$ is $\Pi^1_{n+1}$-iterable. Then there is
  an iteration tree $\cT$ on $M$ and a putative \footnote{We say that
    $\cU$ is a \emph{putative iteration tree} if it satisfies all
    properties of an iteration tree, but we allow the last model, if
    it exists, to be ill-founded.} iteration tree $\cU$ on $N$ of
  length $\lambda+1$ for some limit ordinal $\lambda$ such that one of
  the following holds:
\begin{enumerate}
\item $\cM_\lambda^\cT$ is an initial segment of $\cM_\lambda^\cU$ and
  there is no drop on the main branch through $\cT$. In this case
  $\cM_\lambda^\cU$ need not be fully well-founded, but it is
  well-founded up to $\cM_\lambda^\cT \cap \Ord$.
\item $\cM_\lambda^\cU$ is an initial segment of $\cM_\lambda^\cT$ and
  there is no drop on the main branch through $\cU$. In this case
  $\cM_\lambda^\cU$ is fully well-founded and $\cU$ is an iteration
  tree.
\end{enumerate}
\end{lemma}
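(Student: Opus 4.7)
The plan is to carry out a simultaneous comparison of $M$ and $N$ inside $M_n^\sharp(x)$ for a real $x$ coding $M$, $N$, and $X$, following the approach of Steel in \cite[Lemma 2.2]{St95} adapted to the $n$-small setting. Let $\delta_x$ denote the least Woodin cardinal of $M_n^\sharp(x)$. Since $M$ and $N$ are $n$-small with no Woodin cardinals, all $\cQ$-structures needed to guide cofinal branches along countable iteration trees on either side are themselves $n$-small and live below $\delta_x$; in particular, they are fully $\omega_1$-iterable inside $M_n^\sharp(x)$ in the sense required by Lemma \ref{lem:comomega1it}.

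First I would build iteration trees $\cT$ on $M$ and $\cU$ on $N$ by the usual least-disagreement method. Successor stages are entirely standard. At each limit stage $\eta < \omega_1$, on the $\cT$-side, the $\omega_1$-iterability of $M$ together with $\cQ$-structure-guided branch selection supplies the unique cofinal well-founded branch whose direct limit carries the correct $\cQ$-structure computed from $\cT \upharpoonright \eta$. On the $\cU$-side, $\Pi^1_{n+1}$-iterability of $N$ supplies a cofinal branch whose direct limit $\cQ$-structurally agrees with the one demanded by the comparison; this direct limit, however, need not be fully well-founded. Because $M$ and $N$ are countable and the sequence of critical points used along each side is strictly increasing, the comparison terminates at some countable limit ordinal $\lambda$.

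At stage $\lambda$, two possibilities arise. If both $\cM_\lambda^\cT$ and $\cM_\lambda^\cU$ are well-founded and no disagreement remains, the usual termination argument yields that one side is an initial segment of the other with no drops on the relevant main branch; if the shorter side happens to be $\cU$, then $\cM_\lambda^\cU$ is fully well-founded and this is conclusion (2), while if the shorter side is $\cT$ we land in conclusion (1) with both last models well-founded. If instead the $\Pi^1_{n+1}$-branch on $\cU$ produces a putatively ill-founded last model while the $\cT$-side is well-founded, then agreement of extenders and of $\cQ$-structures below $\lambda$ forces the well-founded part of $\cM_\lambda^\cU$ to extend past $\cM_\lambda^\cT \cap \Ord$, and $\cM_\lambda^\cT$ is an initial segment of this well-founded part; this is conclusion (1).

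The main obstacle, as in \cite{St95}, is verifying that the $\Pi^1_{n+1}$-iterability hypothesis on $N$ really does supply a $\cQ$-structure-correct branch at every limit stage of $\cU$. This relies on phrasing the statement ``there exists a branch through the current tree whose $\cQ$-structure agrees with the one prescribed by the comparison'' as a $\Pi^1_{n+1}$ formula in a real coding the tree; the fact that the relevant $\cQ$-structures are $n$-small and $\omega_1$-iterable inside $M_n^\sharp(x)$, combined with $\Sigma^1_{n+2}$-absoluteness between $V$ and $M_n^\sharp(x)$, ensures that such branches exist both in $V$ and in $M_n^\sharp(x)$. Once this complexity calculation is carried out, the remainder of the argument is a straightforward adaptation of the proof of Lemma \ref{lem:comomega1it} with the observation from \cite{St95} that an ill-founded last model on the putative side still admits a meaningful comparison relation with the well-founded side.
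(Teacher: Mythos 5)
Your proposal is correct and follows essentially the same route as the paper, which does not give a proof of this lemma but instead defers to the argument of \cite[Lemma 2.2]{St95} (see also Corollary 2.15 in \cite{MSW}): a least-disagreement comparison carried out inside $M_n^\sharp(x)$, with $\cQ$-structure--guided branches on the $\omega_1$-iterable side and $\Pi^1_{n+1}$-iterability supplying $\cQ$-structure--correct (though possibly not fully well-founded) branches on the other side. Your sketch is a reasonable unpacking of exactly that cited argument, the only cosmetic weak point being that termination of the comparison rests on the standard reflection argument (performed inside $M_n^\sharp(x)$, where the trees lie in $H_{\delta_x}$) rather than merely on the critical points being increasing.
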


Finally, note that by a standard argument $\PI^1_2$-determinacy, or
equivalently that $M^\sharp_1(x)$ exists and is
$\omega_1$-iterable for all $x \in \mathbb{R}$, implies that
$M_1^\sharp(X)$ exists and is $\omega_1$-iterable for any countable
set of reals $X$.

\section{Models of the Axiom of Determinacy with a Woodin
  Cardinal} \label{SectADinM1} Recall that
$\mathcal{P}_{\omega_1}(\mathbb{R})$ denotes the set of all countable
sets of reals. We consider models of the form $M_n(A)$, where
$n \in \omega$ and $A \in \mathcal{P}_{\omega_1}(\mathbb{R})$. It is
not hard to see that---provided they exist---many of these structures
are models of the Axiom of Choice. Our first theorem shows that, if
games of length $\omega^2$ with $\PI^1_{n+1}$ payoff are determined,
then many of these structures are models of the Axiom of Determinacy
and we can in addition have that $M_n(A) \cap \bR = A$.
 
\begin{theorem}\label{TheoremADM1}
  Let $n \in \omega$ and suppose that determinacy for $\PI^1_{n+1}$ games
  of length $\omega^2$ holds. Then, there is a club
  $\mathcal{C} \subset \mathcal{P}_{\omega_1}(\mathbb{R})$ such that
  for all $A \in \mathcal{C}$, $M^\sharp_n(A)$ exists, is
  $\omega_1$-iterable, $M^\sharp_n(A) \cap \bR = A$, and
  \[ M_n(A) \models\zf + \ad.\] 
\end{theorem}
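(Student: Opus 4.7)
The plan is to show that the collection
\[ \mathcal{C}_0 = \{A \in \ctbleset : M^\sharp_n(A) \cap \bR = A \text{ and } M_n(A) \models \zf + \ad\} \]
contains a club in $\ctbleset$. Existence and $\omega_1$-iterability of $M^\sharp_n(A)$ for every countable $A$ are already available from the hypothesis combined with the generalization recorded at the end of Section~\ref{SectionPreliminaries}, so the substance is to establish the two remaining clauses on a club. Closure of $\mathcal{C}_0$ under increasing $\omega$-chains I would handle by a direct-limit/elementarity argument: any real of $M^\sharp_n(A)$, where $A = \bigcup_k A_k$, is a fine-structural Skolem term in finitely many parameters from some $A_k$, hence lies in $A_k$; and any putative failure of $\ad$ in $M_n(A)$ witnessed by a set $B$ would reflect to some $M_n(A_k)$.

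The bulk of the proof, and the first half of the main obstacle, is unboundedness for the ``no new reals'' clause. Given $B_0 \in \ctbleset$, I would use the length $\omega^2$ game as an enumeration game, reading off one real per block of $\omega$ moves and setting $A = B_0 \cup \{z_k : k < \omega\}$. The payoff, a Kechris--Solovay--style condition, would be arranged so that Player~I wins iff $A$ is closed under the reals of $M^\sharp_n(A)$; verifiability as a $\PI^1_{n+1}$ condition is provided by the unique $\omega_1$-iteration strategy of $M^\sharp_n(A)$, whose restriction to countable trees is guided by $\cQ$-structures and has $\PI^1_{n+1}$ complexity via Lemma~\ref{lem:comomega1it}. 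A winning strategy for~II, run in parallel against a master enumeration of the putative new reals of $M^\sharp_n(A)$, would produce a real both inside and outside the countable set $A$, a contradiction; hence I wins, and every enumeration according to I's strategy lies in $\mathcal{C}_0$ with respect to the no-new-reals clause.

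For the $\ad$ clause, fixing an $A$ already satisfying $M^\sharp_n(A) \cap \bR = A$ and a putative counterexample $B \in M_n(A)$ to determinacy (coded from $A$ plus finitely many ordinal parameters, using that $M_n(A)$ is well-ordered mod $A$), I would construct a second length $\omega^2$ game whose first block simulates a run of the length-$\omega$ game with payoff $B$, while the remaining $\omega$ blocks carry out a comparison/iteration side-game certifying, via Lemma~\ref{lem:coitPi12omega1}, that the resulting strategy is absolute to $M_n(A)$. The main technical obstacle I anticipate throughout is complexity bookkeeping: the iteration strategies and $\cQ$-structures that must be invoked in each payoff can \emph{a priori} push the complexity above $\PI^1_{n+1}$, and keeping the payoffs within $\PI^1_{n+1}$ relies essentially on Lemmas~\ref{lem:comomega1it} and~\ref{lem:coitPi12omega1}, together with the hypothesis that the premice involved are $n$-small with no definable Woodin cardinals in the sense of Definition~\ref{def:nodefWdns}. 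Granted this, $\PI^1_{n+1}$-determinacy of the $\omega^2$-game furnishes a winning strategy which, via the comparison framework, translates back into a winning strategy for $B$ inside $M_n(A)$, contradicting the choice of $B$ and completing the club argument.
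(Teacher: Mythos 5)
Your overall instinct---a length-$\omega^2$ game with a Kechris--Solovay-style payoff, with Lemmas \ref{lem:comomega1it} and \ref{lem:coitPi12omega1} doing the complexity control---is the right one and matches the paper in spirit, but there are two genuine gaps. The first is the complexity of your payoff. The condition ``$A$ is closed under the reals of $M_n^\sharp(A)$'' is not a $\PI^1_{n+1}$ condition on the play, and your justification for it (that the $\cQ$-structure-guided $\omega_1$-iteration strategy of $M_n^\sharp(A)$ is $\PI^1_{n+1}$) is false: already ``$N=M_n^\sharp(x)$'' is $\Pi^1_{n+2}$, and Lemma \ref{lem:comomega1it} is a comparison lemma for $\omega_1$-iterable mice, not a complexity bound on the strategy. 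The paper's essential device, which your proposal omits, is that Player I does not assert anything about $M_n^\sharp(A)$ directly; instead he plays (via the bits $v_i$) the complete $\langpm$-theory of a candidate premouse that is only required to be a minimal $\varphi$-witness and $\Pi^1_{n+1}$-iterable---conditions that genuinely are $\Pi^1_{n+1}$ in the play---and then Lemmas \ref{lem:Pi12omega1itGeneral} and \ref{lem:CommonIterate} upgrade this candidate to an $\omega_1$-iterable model that coincides with the definable hull of the relevant level of $M_n(A)$. Without this (or an equivalent) the payoff set is simply too complicated for the determinacy hypothesis to apply. Relatedly, you never explain how the winning strategy for the long game gets \emph{inside} the model $M_n(A)$: the paper takes a countable hull $W\prec V_\kappa$ containing $\sigma$, works with $\bar\sigma=\sigma\cap W$, and uses that $\cN_p$ is pointwise definable from $\bR^W$ (so $\cN_p=\cN_q$ for any two winning runs) to see that the induced strategy $\tau$ for the inner Gale--Stewart game is an element of $\cN_p$. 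Your phrase ``certifying that the resulting strategy is absolute to $M_n(A)$'' gestures at this but Lemma \ref{lem:coitPi12omega1} does not deliver it.

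The second gap is your decomposition of ``contains a club'' into unboundedness plus closure of $\mathcal{C}_0$ under increasing $\omega$-chains. The closure argument fails: if $A=\bigcup_k A_k$, a real of $M_n(A)$ definable from finitely many parameters in $A_k$ need not lie in $M_n(A_k)$, because $M_n(A_k)$ is a different premouse built over a different set, not a hull or elementary substructure of $M_n(A)$; likewise a non-determined set in $M_n(A)$ need not reflect to any $M_n(A_k)$. The paper sidesteps this entirely by arguing contrapositively: one assumes the bad set is stationary, intersects it with the club of sets of the form $\bR^W$ for countable $W\prec V_\kappa$ (and, for the $\ad$ clause, with the club from Lemma \ref{LemmaRealsStationary}), and derives a contradiction from \emph{either} player having a winning strategy in $\cG_{\varphi,\psi}$; non-stationarity of the bad set then yields the club. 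You should restructure your argument along these lines rather than trying to prove closure of $\mathcal{C}_0$ itself.
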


To simplify the notation we will from this point on only consider the
case $n=1$. The general case $n \in \omega$ can be shown by
straightforward modifications of the proof we give for $n=1$
below. For the rest of this section assume that $V$ is a model of
$\zfc$ and Projective Determinacy, i.e., that $M_n^\sharp(x)$ exists
for all reals $x$. Note that the hypothesis of Theorem
\ref{TheoremLongIntro}, determinacy for $\PI^1_{n+1}$ games of length
$\omega^2$, trivially implies Projective Determinacy since after the
first $\omega$ moves the following rounds can be used to ``play
witnesses for projections''. Whenever we are assuming a stronger
determinacy hypothesis, we will point it out explicitly.

Before moving on to the proof of Theorem \ref{TheoremADM1}, we recall
some basic model-theoretic facts we will need later. In addition to
the language of premice,
$\lpm = \{\dot \in, \dot E, \dot F, \dot X\}$, we will now also
consider the language $\langpm$ resulting from enhancing $\lpm$ with
constants $\dot x_i$, for $i \in \omega$. For an $\langpm$-model
$\cM = (M,\in, \dot E^\cM,\dot{F}^\cM,\dot X^\cM,\{\dot x_i^\cM \colon
i < \omega\})$ we write $\cM \upharpoonright \lpm$ for the restriction
$(M,\in,\dot E^\cM, \dot F^\cM, \dot X^\cM)$ of the model $\cM$ to the
smaller language $\lpm$.

Let
$\mathcal{M} = (M,\in, \dot E^\cM, \dot F^\cM, \dot X^\cM, \{x_i
\colon i \in \omega\})$ for $x_i = \dot x_i^\cM$, $i \in \omega$, be a
model in the enhanced language, so in particular
$\{x_i \colon i\in\omega\} \subseteq M$. The \emph{definable closure}
of $\{x_i \colon i\in\omega\}$ in $\mathcal{M} \upharpoonright \lpm$
is defined to be the submodel
\[ (\bar M, \in, \dot E^\cM \cap \bar{M}, \dot F^\cM \cap \bar{M},
  \dot X^\cM \cap \bar{M}) \] of $\mathcal{M} \upharpoonright \lpm$
where $\bar{M}$ consists of all $a\in M$ such that for some $k<\omega$
and some $\lpm$-formula $\phi(v,v_0, \dots, v_k)$,
\begin{align*}
\cM \upharpoonright \lpm \models \text{``$\phi[a, x_0, \dots, x_k]$ and there is a
  unique $x$ such that $\phi[x, x_0, \dots, x_k]$.''}
\end{align*}

For sufficiently nice theories $T$, the definable closure of a model
of $T$ does not depend on the model itself but only on the theory. 

\begin{lemma}\label{LemmaNDoesNotDependOnM}
  Suppose $T\supset \zf$ is a complete, consistent theory in the
  language $\langpm$ with the property that whenever
\[\mathcal{M} = (M,\in, \dot E^\cM, \dot F^\cM, \dot X^\cM, \{\dot
  x_i^\cM \colon i \in \omega\})\] 
is a model of $T$ and $\mathcal{N}^\cM$ is the definable closure of
$\{\dot x_i^\cM \colon i\in\omega\}$ in $\mathcal{M}\upharpoonright\lpm$, then
\begin{enumerate}
\item for each $i\in\omega$, $\cM\models \dot x_i^\cM \in
  \mathbb{R}$, \label{eq:LemmaNDoesNotDependOnM1} 
\item $\mathcal{N}^\cM \prec \mathcal{M} \upharpoonright
  \lpm$. \label{eq:LemmaNDoesNotDependOnM2}
\end{enumerate}
Then $\mathcal{N}^\cM$ does not depend on $\mathcal{M}$, i.e., if
$\cP$ is another model of $T$ with properties
\eqref{eq:LemmaNDoesNotDependOnM1} and
\eqref{eq:LemmaNDoesNotDependOnM2}, then $\cN^\cM$ and $\cN^\cP$ are
isomorphic.
\end{lemma}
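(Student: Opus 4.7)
The strategy is to exhibit an explicit isomorphism $f : \cN^\cM \to \cN^\cP$ by matching elements via their defining formulas: if $a \in \cN^\cM$ is the unique witness of an $\lpm$-formula $\phi(v, \dot x_0^\cM, \dots, \dot x_k^\cM)$, declare $f(a)$ to be the unique witness of $\phi(v, \dot x_0^\cP, \dots, \dot x_k^\cP)$ in $\cP \upharpoonright \lpm$. This is essentially the familiar construction of the prime (term) model of a complete theory, and the decisive hypothesis is that $T$ is complete in the \emph{enhanced} language $\langpm$ that already contains the constants $\dot x_i$. Because of this, the $\langpm$-sentence $\exists! v\, \phi(v, \dot x_0, \dots, \dot x_k)$ is decided by $T$, so $\cM$ and $\cP$ agree on exactly which formulas $\phi$ define elements of the definable closure.

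The plan is to establish three things in turn. First, that $f$ is well defined: if two formulas $\phi_1, \phi_2$ both isolate the same $a$ in $\cM$, then $\cM$ models the $\langpm$-sentence asserting that the unique witnesses of $\phi_1$ and $\phi_2$ over $\dot{\vec x}$ coincide, so this sentence is in $T$ and the two candidates for $f(a)$ in $\cP$ agree. By the same completeness argument, the range of $f$ lies in $\cN^\cP$ and a symmetric construction gives a two-sided inverse, so $f$ is a bijection. Second, to verify that $f$ is an $\lpm$-isomorphism, fix an $\lpm$-formula $\psi(v_1, \dots, v_m)$ and $a_1, \dots, a_m \in \cN^\cM$ defined by $\phi_1, \dots, \phi_m$; by \eqref{eq:LemmaNDoesNotDependOnM2} applied to $\cM$, $\cN^\cM \upharpoonright \lpm \models \psi[a_1, \dots, a_m]$ is equivalent to $\cM \upharpoonright \lpm \models \psi[a_1, \dots, a_m]$, which in turn is equivalent to the $\langpm$-sentence
\[
\exists v_1 \cdots \exists v_m \Bigl(\bigwedge_{i\le m} \phi_i(v_i, \dot x_0, \dots, \dot x_{k_i}) \wedge \psi(v_1, \dots, v_m)\Bigr)
\]
holding in $\cM$; completeness of $T$ transfers this to $\cP$, and a final application of \eqref{eq:LemmaNDoesNotDependOnM2} to $\cP$ pushes the statement down to $\cN^\cP$ with witnesses $f(a_1), \dots, f(a_m)$. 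Third, the predicates $\dot E$, $\dot F$, $\dot X$ are just further atomic $\lpm$-formulas, so they are handled by the same clause.

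No substantive obstacle is anticipated: the argument is routine once the bookkeeping between $\lpm$ and $\langpm$ is kept straight, with the only nontrivial ingredient being the elementarity in \eqref{eq:LemmaNDoesNotDependOnM2}, which is needed to pass between truth in $\cN^\cM$ and truth in $\cM \upharpoonright \lpm$. Hypothesis \eqref{eq:LemmaNDoesNotDependOnM1}, asserting that each $\dot x_i^\cM$ is a real in $\cM$, is not used in this uniqueness argument; it is presumably invoked only later, when one wishes to identify $\cN^\cM$ concretely with a premouse whose set of reals is $\{\dot x_i^\cM : i \in \omega\}$.
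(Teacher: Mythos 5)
Your proof is correct and follows essentially the same route as the paper's: both define the canonical map sending the unique witness of $\phi(v,\dot x_0,\dots,\dot x_k)$ in $\cM$ to the unique witness of the same formula in $\cP$, and both use completeness of $T$ in the enhanced language $\langpm$ to transfer uniqueness, definability, and membership in the predicates $\dot E,\dot F,\dot X$. Your treatment of well-definedness and bijectivity is somewhat more explicit than the paper's, and your closing observation is accurate: the hypothesis that each $\dot x_i^\cM$ is a real is invoked in the paper only to conclude that the constants receive literally the same interpretation in both models, which is not needed for the isomorphism claim itself.
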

\begin{proof}
Let $T$ be as in the statement and let
\[\mathcal{M} = (M,\in, \dot E^\cM, \dot F^\cM, \dot X^\cM,\{\dot
  x^\cM_i \colon i\in\omega\})\]
and
\[\mathcal{P} = (P,\in, \dot E^\cP, \dot F^\cP, \dot X^\cP,\{\dot
  x^\cP_i \colon i\in\omega\})\] be two models of $T$. Since $T$ is
complete, $\mathcal{M}$ and $\mathcal{P}$ are elementarily equivalent
with respect to the language $\langpm$. If $\mathcal{N}^\cM$ and
$\mathcal{N}^\cP$ are the respective definable closures of
$\cM \upharpoonright \lpm$ and $\cP \upharpoonright \lpm$, the natural
function $\rho$ given by
\begin{align*}
  \text{the unique $a \in M$ such that $\cM \upharpoonright \lpm
  \models \phi[a, \dot x_1^\cM, \dots, \dot x_k^\cM]$ } \\ \mapsto
  \text{ the unique $b \in P$ such that $\cP \upharpoonright \lpm
  \models \phi[b, \dot x_1^\cP, \dots, \dot x_k^\cP]$ } 
\end{align*} 
for some $k < \omega$ and some $\lpm$-formula
$\phi(x, v_1, \dots, v_k)$ is an isomorphism from $\mathcal{N}^\cM$ to
$\mathcal{N}^\cP$. This follows from the following observations:
\begin{enumerate}
\item Since $\dot x_i$ are constants interpreted by reals and $T$ is
  complete, they have the same interpretation in $\mathcal{M}$ and
  $\mathcal{P}$, i.e., for all $i \in \omega$, $\dot x_i^\cM = \dot x_i^\cP$.
\item If $x \in \mathcal{N}^\cM$, then there is an $\lpm$-formula
  $\psi(v,v_1,\dots,v_k)$ such that $\cM \upharpoonright \lpm \models
  \psi[x, \dot x_1^\cM, \dots, \dot x_k^\cM]$ and $x$ is unique with
  this property in $\cM \upharpoonright \lpm$. Thus, if $x \in \dot
  E^\cM$, 
\begin{align*}
  \cM \models \text{``$x$ is the unique element satisfying
  $\psi[x,\dot x_1^\cM,\dots, \dot x_k^\cM]\wedge x\in \dot E^\cM$,''}
\end{align*}
and so by considering the $\langpm$-sentence ``there exists a unique
$x$ with $\psi(x, \dot x_1,\dots, \dot x_k) \wedge x \in \dot E$'',
\begin{align*}
  \cP \models \text{``$\rho(x)$ is the unique element satisfying
  $\psi[\rho(x), \dot x_1^\cP, \dots, \dot x_k^\cP] \wedge \rho(x) \in
  \dot E^\cP$'',}
\end{align*}
hence $\rho(x) \in \dot E^\cP$. If $x \not\in \dot E^\cM$, we have
$x \not\in \dot E^\cP$ by the same argument.
\end{enumerate}
The argument for the other predicates is analogous; hence, all
predicates are interpreted the same way. Therefore, $\mathcal{N}^M$
and $\mathcal{N}^P$ are indeed isomorphic.
\end{proof}

We will now set a general context in which we prove the following two
lemmas, as we want to apply them in both the proof of Theorem
\ref{TheoremADM1} and the proof of Lemma \ref{LemmaRealsStationary}
below, for different formulae $\varphi$.

\begin{definition}\label{def:phiwitness}
  Let $X \in \ctbleset$ and let $\cN$ be a countable $X$-premouse. Let
  $\varphi$ be an $\lpm$-formula without free variables.
  \begin{enumerate}
  \item We say $\cN$ is a \emph{$\varphi$-witness} if, and only if,
    $\cN$ is 1-small, sound, and solid, $\cN \vDash \zf$, there are no
    Woodin cardinals in $\cN$, and $\cN \vDash \varphi$.
  \item We say $\cN$ is a \emph{minimal $\varphi$-witness} if, and
    only if, $\cN$ is a $\varphi$-witness and no proper initial
    segment of $\cN$ is a $\varphi$-witness, i.e., whenever $\cP$ is a
    proper initial segment of $\cN$ satisfying $\zf$ + ``there are no
    Woodin cardinals'', then $\cP \nvDash \varphi$.
  \end{enumerate}
\end{definition}

\begin{lemma}\label{lem:Pi12omega1itGeneral} 
  Let $X \in \ctbleset$ and suppose that $\cN$ is a countable
  $\Pi^1_2$-iterable $X$-premouse which is a minimal $\varphi$-witness
  for some $\lpm$-formula $\varphi$. Moreover, assume that there is
  another countable $X$-premouse $\cM$ which is a $\varphi$-witness
  and $\omega_1$-iterable. Then $\cN$ is in fact $\omega_1$-iterable.
\end{lemma}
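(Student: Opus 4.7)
The plan is to compare $\cN$ with $\cM$ using Lemma \ref{lem:coitPi12omega1}, extract a fully elementary embedding of $\cN$ into a common iterate, and lift the iteration strategy of $\cM$ to one for $\cN$ via the standard copying construction. Before comparing, I would first reduce to the case where $\cM$ is itself a minimal $\varphi$-witness, by replacing $\cM$ by its least initial segment which is a $\varphi$-witness; this initial segment is countable, sound, solid, $1$-small, contains no Woodin cardinals, and inherits $\omega_1$-iterability from $\cM$. Hence from now on both $\cM$ and $\cN$ are countable, sound, $1$-small minimal $\varphi$-witnesses, with $\cM$ being $\omega_1$-iterable and $\cN$ being $\Pi^1_2$-iterable.

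Apply Lemma \ref{lem:coitPi12omega1} with $M = \cM$ and $N = \cN$ to obtain a tree $\cT$ on $\cM$ and a putative tree $\cU$ on $\cN$ of length $\lambda + 1$, falling under one of its two stated outcomes. The key step is to use the minimality of both $\varphi$-witnesses to rule out the two asymmetric subcases. Suppose, for instance, that outcome $(2)$ held with $\cM_\lambda^\cU \triangleleft \cM_\lambda^\cT$: since the main branch of $\cU$ does not drop, the iteration map $\cN \to \cM_\lambda^\cU$ is fully elementary, so $\cM_\lambda^\cU \models \varphi$, is $1$-small, and has no Woodin cardinals. Thus $\cM_\lambda^\cT$ has a proper initial segment satisfying $\varphi + \zf + $ ``there are no Woodin cardinals''. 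Pulling this statement back along the iteration embedding on the $\cM$-side of $\cT$ (passing to the drop model if the main branch drops, and using that this drop model is itself a proper initial segment of $\cM$), elementarity yields a proper initial segment of $\cM$ satisfying the same sentences; combined with soundness and solidity inherited as an initial segment of $\cM$, this produces a proper initial segment of $\cM$ that is a $\varphi$-witness, contradicting the minimality of $\cM$. A symmetric argument using the minimality of $\cN$ rules out outcome $(1)$ with $\cM_\lambda^\cT \triangleleft \cM_\lambda^\cU$. The only remaining possibility is $\cM_\lambda^\cT = \cM_\lambda^\cU$ with no drop on either main branch; in particular $\cU$ is a genuine iteration tree with fully wellfounded last model, and there is a fully elementary embedding $\iota : \cN \to \cM_\lambda^\cU$.

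Finally, the $\omega_1$-iterability of $\cN$ follows by the standard copy construction along $\iota$: given any countable putative iteration tree $\cT^*$ on $\cN$, copy it through $\iota$ to a tree $\iota \cT^*$ on $\cM_\lambda^\cU$; since $\cM_\lambda^\cU$ is a non-dropping iterate of the $\omega_1$-iterable $\cM$, the tree $\iota \cT^*$ admits a cofinal wellfounded branch $b$, and the same $b$ is a cofinal wellfounded branch through $\cT^*$. I expect the main obstacle to be carrying out the pull-back argument cleanly in the dropping case, and verifying that all the clauses of Definition \ref{def:phiwitness} (in particular soundness and solidity) transfer back to the pre-image. This should follow from the standard fine-structural analysis of drops in Mitchell-Steel iteration trees, together with the fact that $\varphi$ is a single $\lpm$-sentence.
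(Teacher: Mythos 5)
Your overall strategy coincides with the paper's: coiterate $\cM$ and $\cN$ via Lemma \ref{lem:coitPi12omega1}, use minimality to kill the outcomes in which the $\cN$-side wins, and conclude that $\cN$ embeds elementarily into an $\omega_1$-iterable model. However, your preliminary reduction to $\cM$ minimal, and the subsequent ruling out of the outcome $\cM_\lambda^\cU \lhd \cM_\lambda^\cT$, are unnecessary: in that outcome $[0,\lambda]_U$ does not drop, so $\cN$ already embeds fully elementarily into $\cM_\lambda^\cU$, an initial segment of a non-dropping iterate of the $\omega_1$-iterable $\cM$, and you are done. Accordingly, the paper only needs to show that $\cN$ cannot \emph{win} the comparison, i.e., it rules out (i) $\cM_\lambda^\cT = \cM_\lambda^\cU$ with a drop on the main branch of $\cU$ and (ii) $\cM_\lambda^\cT \lhd \cM_\lambda^\cU$; it never needs $\cM$ to be minimal, only to be a $\varphi$-witness.

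The genuine gap is that the case you must rule out, $\cM_\lambda^\cT \lhd \cM_\lambda^\cU$, is not ``symmetric'' to the case you wrote out, and it is exactly where the paper's proof does its work. The asymmetries are: $\cU$ is only a putative tree, so $\cM_\lambda^\cU$ may be illfounded and one has to work inside its wellfounded part (which contains $\cM_\lambda^\cT$); and Lemma \ref{lem:coitPi12omega1}(1) gives no drop information about $[0,\lambda]_U$. If $[0,\lambda]_U$ does not drop, the existential statement ``some level $\cdot|\alpha$ models $\zf$ + no Woodin cardinals + $\varphi$'' reflects from $\cM_\lambda^\cU$ back to $\cN$ and contradicts minimality of $\cN$ directly. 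If it does drop, one must locate the first drop $\beta+1$, note that $\cM_{\beta+1}^*$ is a proper initial segment of $\cM_\xi^\cU$ where $\xi$ is the $U$-predecessor and $[0,\xi]_U$ does not drop, reflect the witnessing ordinal from $\cM_\lambda^\cU$ to $\cM_{\beta+1}^*$ along the tail embedding, and then transfer it to $\cM_\xi^\cU$ and hence, via the elementary non-dropping map $\cN \to \cM_\xi^\cU$, to $\cN$. You flag this as ``the main obstacle'' but leave it unresolved; it is the heart of the proof rather than a routine verification, and your stated recipe for the dropping case (``the drop model is itself a proper initial segment of'' the base model) is not literally correct, since the drop model sits inside an iterate, not inside the base model. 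Finally, your case analysis silently skips $\cM_\lambda^\cT = \cM_\lambda^\cU$ with a drop on $[0,\lambda]_U$, which falls under outcome (1); this is dispatched by observing that the drop forces $\rho_\omega(\cM_\lambda^\cU) < \cM_\lambda^\cU \cap \Ord$, whereas the non-dropping $\cT$-side makes the common last model a model of $\zf$.
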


\begin{proof} Let $x$ be a real coding $\cM$ and
  $\cN$ and consider the coiteration of $\cM$ and $\cN$ inside
  $M_1^\sharp(x)$ in the sense of Lemma \ref{lem:coitPi12omega1} using
  $\omega_1$-iterability for $\cM$ and $\Pi^1_2$-iterability for
  $\cN$. Let $\cT$ and $\cU$ be the resulting iteration trees on $\cM$
  and $\cN$ with final models $\cM^*$ and $\cN^*$ respectively. We
  will show that $\cN$ cannot win this comparison, i.e., 
  $\cN^* \unlhd \cM^*$ and there is no drop on the main branch through
  $\cU$. This implies that $\cN$ is elementarily embeddable into the
  $\omega_1$-iterable premouse $\cM^*$ and thus $\omega_1$-iterable.

  Assume first that $\cM^* = \cN^*$, there is no drop on the main
  branch through $\cT$, and there is at least one drop on the main
  branch through $\cU$. Then $\cM^*$ is (by elementarity) a model of
  $\zf$, contrary to the fact that
  $\rho_\omega(\cN^*) < \cN^* \cap \Ord$.

  Finally assume, again towards a contradiction, that
  $\cM^* \lhd \cN^*$ and there is no drop on the main branch through
  $\cT$. The model $\cN^*$ need not be fully well-founded, but this does
  not affect the rest of the argument as we shall work in the
  well-founded part of $\cN^*$.

  Notice that $\cM^*$ is a proper initial segment of $\cN^*$ which (by
  elementarity) satisfies $\zf$, ``there are no Woodin cardinals,''
  and $\varphi$. Therefore, it cannot be that there is no drop in
  model on the main branch through $\cU$, by elementarity and the
  minimality of $\cN$. Assume for simplicity that there is exactly one
  drop in model on the main branch through $\cU$, say, at level
  $\beta+1 < \lambda$ (the general case is similar: if there is more
  than one drop, we repeat the argument). Using the notation from
  \cite[Section 3.1]{St10}, the fact that there is a drop in model at
  stage $\beta+1$ implies that $\cM_{\beta+1}^*$ is a proper initial
  segment of $\cM_\xi^\cU$, where $\xi$ is the $U$-predecessor of
  $\beta+1$ and $\cM_{\beta+1}^*$ is the model to which the next
  extender on the main branch through $\cU$ is applied. So by
  elementarity between $\cM_{\beta+1}^*$ and $\cN^*$, there is an
  ordinal $\alpha^*$ witnessing the failure of the minimality property
  for $\cM_{\beta+1}^*$, i.e., the following hold:
  \begin{enumerate}
  \item $\alpha^* < \cM_{\beta+1}^* \cap \Ord < \cM_\xi^\cU \cap \Ord$,
  \item $\cM_{\beta+1}^*|\alpha^*$ is a model of $\zf$ with no Woodin
    cardinals, and
  \item $\cM_{\beta+1}^*|\alpha^* \vDash \varphi.$
  \end{enumerate}
  But $\cM_{\beta+1}^*$ is an initial segment of $\cM_\xi^\cU$, so the
  same holds for $\cM_\xi^\cU | \alpha^*$. Now by elementarity
  again---this time between $\cN$ and $\cM_\xi^\cU$---this failure of
  the minimality property also holds for $\cN$, contradicting the fact
  that $\cN$ is a minimal $\varphi$-witness.
\end{proof}

\begin{lemma} \label{lem:CommonIterate} Let $X \in \ctbleset$ and let
  $\cM$ and $\cN$ be $\omega_1$-iterable countable $X$-premice which
  are minimal $\varphi$-witnesses for some $\lpm$-formula
  $\varphi$. Then $\cM$ and $\cN$ have a common iterate and on both
  sides of the iteration there is no drop in model on the main branch
  through the iteration
  tree. 
\end{lemma}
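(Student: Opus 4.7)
The plan is to apply the comparison lemma, Lemma \ref{lem:comomega1it}, to the pair $(\cM,\cN)$ and then use the minimality of both sides to collapse the output into a genuine common iterate with no drops on either main branch. Both $\cM$ and $\cN$ are countable, $\omega_1$-iterable, sound, $1$-small, $X$-premice with no Woodin cardinals (since they are $\varphi$-witnesses); by the remark following Lemma \ref{lem:comomega1it}, the hypothesis ``no definable Woodin cardinals'' may be weakened to ``no Woodin cardinals'', so comparison applies. Let $\cT$ on $\cM$ and $\cU$ on $\cN$ be the resulting iteration trees with last models $\cM^*$ and $\cN^*$. Up to swapping roles, I may assume $\cM^* \unlhd \cN^*$ and that there is no drop in model on the main branch of $\cT$. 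In particular, the branch embedding $\cM \to \cM^*$ is fully elementary, so $\cM^*$ is itself a $\varphi$-witness: it is sound, solid, $1$-small, models $\zf$ without Woodin cardinals, and satisfies $\varphi$.

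Next I would rule out the possibility that $\cM^* \lhd \cN^*$ as a proper initial segment, by an argument patterned after the proof of Lemma \ref{lem:Pi12omega1itGeneral}. If the main branch of $\cU$ has no drop in model, then $\cN \to \cN^*$ is fully elementary, and pulling the $\varphi$-witness proper initial segment $\cM^* \lhd \cN^*$ back along this embedding exhibits a proper initial segment of $\cN$ which is a $\varphi$-witness, contradicting the minimality of $\cN$. If the main branch of $\cU$ does drop, let $\beta+1$ be the largest drop index, and let $\xi$ be its $\cU$-predecessor. From $\beta+1$ onward the branch is elementary, so the existence of a proper initial-segment $\varphi$-witness inside $\cN^*$ pulls back to a proper initial-segment $\varphi$-witness $\cM^*_{\beta+1}|\alpha^*$ inside the model $\cM^*_{\beta+1}$ from which the extender at stage $\beta+1$ is applied; since $\cM^*_{\beta+1}$ is itself a proper initial segment of $\cM^\cU_\xi$, the same $\cM^\cU_\xi|\alpha^*$ witnesses the failure of minimality inside $\cM^\cU_\xi$. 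Iterating this pull-back through the finitely many drops on the main branch of $\cU$ eventually locates a proper initial segment of $\cN = \cM^\cU_0$ that is a $\varphi$-witness, again contradicting minimality.

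It remains to rule out $\cM^* = \cN^*$ with a drop on the main branch of $\cU$. But such a drop forces $\rho_\omega(\cN^*) < \cN^* \cap \Ord$, while the elementary embedding $\cM \to \cM^*$ yields $\cM^* \vDash \zf$ and in particular $\rho_\omega(\cM^*) = \cM^* \cap \Ord$; since $\cM^* = \cN^*$, this is a contradiction. Thus $\cM^* = \cN^*$ and there is no drop in model on either main branch, as required.

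The main obstacle I anticipate is the bookkeeping in the iterated pull-back across the drops on the main branch of $\cU$: the single-drop argument from Lemma \ref{lem:Pi12omega1itGeneral} supplies the template, but one must verify carefully that at each step the $\varphi$-witness property genuinely transfers from a segment of $\cN^*$ to a segment of the model just below the drop, and that the process terminates at $\cN$ after finitely many iterations (using that any branch of an iteration tree has only finitely many drops).
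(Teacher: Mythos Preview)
Your proof is correct and follows essentially the same approach as the paper: apply the comparison lemma, rule out strict initial segments via the pull-back argument of Lemma~\ref{lem:Pi12omega1itGeneral}, and then exclude drops when $\cM^* = \cN^*$. The only cosmetic difference is in this last step: you invoke the $\zf$-versus-projectum contradiction (exactly as in the first paragraph of the proof of Lemma~\ref{lem:Pi12omega1itGeneral}), whereas the paper instead observes that $\cM^*_{\beta+1}$ is a $\varphi$-witness properly contained in the minimal $\varphi$-witness $\cM^\cU_\xi$; both arguments work.
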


\begin{proof}
  Let $\cT$ and $\cU$ be the iteration trees of length $\lambda +1$
  for some ordinal $\lambda$ on $\cM$ and $\cN$ respectively obtained
  from a successful comparison in the sense of Lemma
  \ref{lem:comomega1it}. Write $\cM^* = \cM_\lambda^\cT$ and
  $\cN^* = \cM_\lambda^\cU$ for the last models of the iteration
  trees. We cannot have $\cM^* \lhd \cN^*$, by the argument of Lemma
  \ref{lem:Pi12omega1itGeneral}. Similarly, the alternative
  $\cN^* \lhd \cM^*$ leads to a contradiction, so we must have
  $\cN^* = \cM^*$.
  
  Only one side of the comparison can drop; assume that there is a
  drop in model on the main branch through $\cU$. The case that the
  main branch through $\cT$ drops is analogous. As in the proof of
  Lemma \ref{lem:Pi12omega1itGeneral}, we assume for simplicity that
  there is exactly one drop in model along the main branch through
  $\cU$, say at stage $\beta+1 < \lambda$; the general case is dealt
  with similarly by repeating the argument. By elementarity,
  $\cM^* = \cN^*$ and $\cM_{\beta+1}^*$ are
  $\varphi$-witnesses. Moreover, as $\cN$ is a minimal
  $\varphi$-witness, by elementarity the same holds for $\cM_\xi^\cU$,
  where $\xi$ is as in the proof of Lemma
  \ref{lem:Pi12omega1itGeneral} the $U$-predecessor of $\beta+1$. But
  $\cM_{\beta+1}^* \lhd \cM_\xi^\cU$, contradicting the minimality
  property for $\cM_\xi^\cU$. Therefore, both sides of the comparison
  do not drop in model.
\end{proof}

We are now going to define a collection of games of length
$\omega^2$ which are generalizations of the game in
\cite[Lemma 3]{MaSt08}. The argument there goes back to ideas in
\cite{Fr71} 
allowing one of the two players in the game to play the theory of a
model with certain properties in addition to the usual moves. In the
proofs of Theorem \ref{TheoremADM1} and Lemma
\ref{LemmaRealsStationary} below we will consider two different
instances of games from this collection where Player I plays a
complete and consistent theory in the language of premice with
additional constant symbols.

Before we give the definition of the games, recall that if
$X \in \ctbleset$ and $\cM$ is an $X$-premouse, then analogously to
the existence of a definable well-order in $L$, there is a uniformly
definable $X$-parametrized family of well-orders the union of whose
ranges is $\cM$ (cf. \cite[Proposition 2.4]{St08b}). More
specifically, we can fix a formula $\theta(\cdot,\cdot,\cdot)$ in the
language of premice $\lpm$ such that for any such $X$ and any
$X$-premouse $\cM$, the following hold:
\begin{enumerate}[$(i)$]
\item for any $x \in \cM$, there is some $\alpha \in \Ord^\cM$ and
  some $r \in X$ such that
  \[\cM \models \theta(\alpha,r,x);\]
\item for all $r \in X$ and $\alpha \in \Ord^\cM$ there is at most one
  $x \in \cM$ such that
  \[\cM \models \theta(\alpha,r,x).\]
\end{enumerate}

Moreover, fix recursive bijections $m$ and $n$ assigning an odd number
$>1$ to each $\langpm$-formula $\varphi$ such that $m$ and $n$ have
disjoint recursive ranges and for every $\varphi$, $m(\varphi)$ and
$n(\varphi)$ are larger than
$\max\{ i \, \colon \, \dot x_i \text{ occurs in } \varphi\}$.

\begin{definition}\label{def:Gvarphipsi}
  Let $\varphi$ and $\psi(x_0,a,b)$ be $\lpm$-formulae and let
  $\mathcal{G}_{\varphi,\psi}$ denote the following game of length
  $\omega^2$ on $\omega$: Fix some enumeration
  $(\phi_i \colon i \in \omega)$ of all $\langpm$-formulae such that
  $\dot x_i$ does not appear in $\phi_j$ if $j \leq i$. Then a typical
  run of $\mathcal{G}_{\varphi,\psi}$ looks as follows:
  \[ \begin{array}{c|ccccccc} \mathrm{I} & x_0 & a & v_0, x_1 & & v_1,
      x_3 & & \hdots \\ \hline \mathrm{II} & & b & & x_2 & & x_4 &
      \hdots \end{array} \]
  \begin{enumerate}
  \item Player I starts by playing some parameter $x_0 \in \BS$;
  \item Players I and II take turns playing natural numbers to
    construct reals $a, b \in \BS$;
  \item Players I and II take turns, respectively playing sequences of
    natural numbers $(v_i, x_{2i+1})$ and $x_{2i+2}$ in $\BS$, for
    $i \in \omega$. We ask that $v_i \in \{0,1\}$.
  \end{enumerate}

  Here $v_i$ will be interpreted as the truth value of the formula
  $\phi_i$ from the enumeration fixed above. This can be thought of as
  Player I either accepting or rejecting the formula $\phi_i$. If so,
  the play determines a complete theory $T$ in the language $\langpm$.

  Player I wins the game $\mathcal{G}_{\varphi,\psi}$ if, and only if,
  \begin{enumerate}
  \item \label{rule:x_1}  $x_1 = a \oplus b$.
  \item \label{rule:reals} For each $i \in \omega$, $T$ contains the
    sentence $\dot x_i \in \BS$ and, moreover, for each
    $j,m \in \omega$, $T$ contains the sentence $\dot x_i(m) = j$ if,
    and only if, $x_i(m) = j$.
  \item \label{rule:dc} For every formula $\phi(x)$ with one free
    variable in the expanded language $\langpm$, and $m(\phi)$ and
    $n(\phi)$ as fixed above, $T$ contains the statements
    \[ \exists x\, \phi(x) \to \exists x\, \exists \alpha \,
      (\phi(x) \land \theta(\alpha, \dot x_{m(\phi)}, x)), \]
    \[ \exists x\, (\phi(x) \land x \in \dot X) \to \phi(\dot
      x_{n(\phi)}). \]
  \item \label{rule:pm} $T$ is a complete, consistent theory such that
    for every countable model $\cM$ of $T$ and every model $\cN^*$
    which is the definable closure of $\{x_i : i < \omega\}$ in
    $\cM \upharpoonright \lpm$, $\cN^*$ is well-founded and if $\cN$
    denotes the transitive collapse of $\cN^*$,
    \begin{enumerate}
    \item $\cN$ is an $X$-premouse, where
      $X = \{x_i \colon i \in \omega\}$,
    \item \label{rule:pmVarphi} $\cN$ is a minimal $\varphi$-witness,
    \item $\cN$ is $\Pi^1_2$-iterable in the sense of \cite[Definition
      1.6]{St95}, and
    \item \label{rule:pmPsi} $\cN \vDash \psi(x_0,a,b)$.
  \end{enumerate}
\end{enumerate}
If Player I plays according to all these rules, he wins the game. In
this case there is a unique premouse $\cN_p$ as in (\ref{rule:pm})
associated to the play $p = (x_0,a \oplus b,v_0, x_1, x_2, \dots)$ of
the game. Otherwise, Player II wins.
\end{definition}

\begin{remark}
  Rule \eqref{rule:dc} in the game $\mathcal{G}_{\varphi,\psi}$
  ensures that if $\cM$ is a model of the theory $T$, then the
  definable closure of $\{x_i:i\in\omega\}$ in
  $\cM \upharpoonright \lpm$ is an elementary substructure of
  $\cM \upharpoonright \lpm$ (by the Tarski-Vaught criterion) by the
  following argument: Suppose $\exists x \phi(x)$ holds in $\cM$. Then
  rule \eqref{rule:dc} ensures that
  $\exists x\, \exists \alpha \, (\phi(x) \land \theta(\alpha, \dot
  x_{m(\phi)}, x))$. Now, the formula
  $\phi(x) \wedge \exists \alpha (\theta(\alpha, \dot x_{m(\phi)}, x)
  \wedge \forall \beta \in \alpha \, \neg \exists y(\phi(y) \land
  \theta(\beta, \dot x_{m(\phi)}, y)))$ uniquely defines a witness $x$
  for $\phi(x)$ (the minimal witness according to the well-order given
  by $\theta(\cdot, \dot x_{m(\phi)}, \cdot)$). Hence, rule
  \eqref{rule:pm} can be followed by Player I by playing an
  appropriate theory $T$, as then the model $\cN$ is uniquely determined
  by it, by Lemma \ref{LemmaNDoesNotDependOnM}.
\end{remark}

To prove Theorem \ref{TheoremADM1}, we first need to show the
following lemma. We thank John Steel for pointing out to us that it
can be proved via a modification of our argument for Theorem
\ref{TheoremADM1}.

\begin{lemma}\label{LemmaRealsStationary}
  Suppose that $\PI^1_2$ games of length $\omega^2$ are
  determined. Then there is a club $\mathcal{C}^* \subset
  \mathcal{P}_{\omega_1}(\mathbb{R})$ such that for all $A \in
  \mathcal{C}^*$, 
  \[ \mathbb{R}\cap M_1(A) = A. \]
\end{lemma}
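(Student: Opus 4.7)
The plan is to apply the game machinery of Definition \ref{def:Gvarphipsi} with a specific pair of $\lpm$-formulae tailored to detect a real of $M_1(X)$ lying outside $X$. I would take $\varphi$ to be the sentence $\exists y\,(y\in\bR\wedge y\notin\dot X)$, so that a minimal $\varphi$-witness $\cN$ is the shortest sound $1$-small $X$-premouse without Woodin cardinals that contains a real outside $X$; and I would take $\psi(x_0,a,b)$ to say that $b$ is such an extra real of $\cN$, tied to the parameters $(x_0,a)$ by a prescribed recursive coding, so that Player I retains meaningful control over which new real he is forced to name. As in the proof of Theorem \ref{TheoremADM1}, the resulting game $\mathcal{G}_{\varphi,\psi}$ has $\PI^1_2$ payoff and is therefore determined by hypothesis.

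The crux of the argument is to show that Player I has no winning strategy. By Lemma \ref{lem:CommonIterate} and the uniqueness of minimal $\varphi$-witnesses, a putative winning $\sigma$ would effectively define a partial function sending each Player II play to the minimal $\varphi$-witness $\cN_X$ over the resulting $X$ together with a designated extra real. Player II's even-indexed moves $x_{2i+2}$ also contribute to $X$, and she can use this, together with her free choice of $b$, either to produce two runs of $\sigma$ with incompatible designations over the same $X$, or to feed the designated real back as one of the $x_{2i+2}$'s, thereby forcing it into $X$ in contradiction with $\psi$. This diagonal step parallels the corresponding argument in the proof of Theorem \ref{TheoremADM1} and is the main technical obstacle.

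By determinacy, Player II has a winning strategy $\tau$. I would then let $\mathcal{C}^* \subset \ctbleset$ consist of those countable $A \subset \bR$ closed under $\tau$ in the following sense: for every play in which Player I's $x_i$'s and theory $T$ plausibly describe an $A$-premouse, the $\tau$-responses (both $b$ and the $x_{2i+2}$'s) also lie in $A$. Standard Löwenheim-Skolem and iterative closing-off arguments show that $\mathcal{C}^*$ is closed unbounded in $\ctbleset$.

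Finally, to deduce $\bR\cap M_1(A)=A$ for $A\in\mathcal{C}^*$, suppose for contradiction that $r\in M_1(A)\cap\bR\setminus A$. Then the minimal initial segment $\cN_0\lhd M_1(A)$ containing a real outside $A$ exists, is a $\varphi$-witness, and is $\Pi^1_2$-iterable by Lemma \ref{lem:Pi12omega1itGeneral}. Player I plays by enumerating $A$ via the $x_i$'s (using that $A\in\mathcal{C}^*$ makes $\tau$'s responses stay in $A$, so the enumeration is legitimate), choosing $(x_0,a)$ so that its coded real is $r$, and declaring the theory $T$ of $\cN_0$. All rules of $\mathcal{G}_{\varphi,\psi}$ are satisfied by this play, so Player I defeats $\tau$, a contradiction. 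Hence no such $r$ exists, and $\bR\cap M_1(A)=A$, as desired.
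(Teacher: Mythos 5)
Your overall framework --- the game $\mathcal{G}_{\varphi,\psi}$ with $\varphi\equiv$ ``there is a real not in $\dot X$'' and a diagonalization against a strategy restricted to a countable hull --- is the right one, but there are two concrete problems.

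First, the design of $\psi$ does not work as described, and the paper's choice here is not a detail one can defer. If $\psi$ asserts that $b$ itself is the extra real of $\cN$, then Player II, who controls $b$, simply plays a real of $X$ and defeats every strategy of Player I; this destroys the half of the argument where you need Player I to beat Player II's strategy $\tau$. If instead the extra real is coded into $(x_0,a)$, then rule \eqref{rule:x_1} forces $x_1=a\oplus b$ into $X=\{x_i: i\in\omega\}$, so in your final step the set over which the premouse is built is no longer $A$ but $A$ together with a code for $r\notin A$, and the comparison with $M_1(A)$ collapses. The paper's $\psi$ is engineered precisely to avoid both traps: the designated real is pinned down \emph{internally} (the least real definable from $x_0$ over the canonical minimal witness that is not in $\dot X$), and per run Player I reveals only a single digit of it ($a_1=z(b_0)$ in answer to the query $b_0$). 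It is then the function $n\mapsto\bar\sigma(x_0,a_0,n)$, which lies in the hull $W$, that is shown to equal the designated real --- and that identification is exactly the ``main technical obstacle'' you set aside.

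Second, your logical architecture requires showing that Player I has no winning strategy \emph{unconditionally}, so that you can extract $\tau$ for Player II and define $\mathcal{C}^*$ as the $\tau$-closed sets. But the diagonal argument against Player I needs $\cN_p=\cN_q$ across different runs, which is obtained from Lemmas \ref{lem:Pi12omega1itGeneral} and \ref{lem:CommonIterate}; those in turn require an $\omega_1$-iterable $\varphi$-witness over $\bR^W$, i.e.\ they require choosing the hull $W$ with $\bR^W\subsetneq\bR\cap M_1(\bR^W)$. Such a choice is available only if the bad set is stationary --- which is the very thing being refuted. If $\bR\cap M_1(\bR^W)=\bR^W$ for club-many $W$, you have no $\omega_1$-iterable witness to compare against, the run-models $\cN_p$ are merely $\Pi^1_2$-iterable and need not coincide, and your ``crux'' is unsupported. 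The paper avoids this by casting the entire proof as a single contradiction: assume the bad set is stationary, then show that \emph{neither} player can win $\mathcal{G}_{\varphi,\psi}$, both cases using that assumption to select a hull $W$ with $\bR^W\subsetneq\bR\cap M_1(\bR^W)$. You should adopt that organization (or else supply a genuinely new argument that Player I cannot win over a ``good'' hull, which is not in the paper).
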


\begin{proof}
  Assume towards a contradiction that the statement of the lemma
  fails. Thus, there is a stationary set of sets
  $A \in \mathcal{P}_{\omega_1}(\mathbb{R})$ such that
  \[A \subsetneq \mathbb{R}\cap M_1(A).\] Let
  \[ \varphi \equiv \text{``there is a real } y \text{ which is not in
    } \dot X \text{''} \] and
  \begin{gather*}
    \psi(x_0,a,b) \equiv \text{``there is a real definable from } x_0
    \text{ which is not in } \dot X \text{ and } \\  \text{ if } z_0
    \text{ is the least
      real definable from } x_0 \text{ which is not in } \dot X, \\ 
                                                               \;\;\;\; \text{ then
      its } b_0\text{th digit is } a_1\text{''}, 
  \end{gather*}
  where $a = (a_0, a_1, \dots)$ and $b = (b_0, b_1, \dots)$ with
  $a_i,b_i \in \omega$ for all $i \in \omega$. This will only be
  applied in $X$-premice $\cM$ for some $X \in \ctbleset$ with
  $x_0 \in X$ and ``least'' refers to the least real in the well-order
  of elements of $\cM$ definable from $x_0$ which is given by
  $\theta(\cdot, x_0, \cdot)$.
  
  Consider the game $\mathcal{G}_{\varphi,\psi}$, i.e., after Player I
  plays the parameter $x_0$, the only relevant moves are the
  following: Player II plays a natural number $b_0$ asking Player I
  for the $b_0$th digit of the least real definable from $x_0$ which
  is not going to be in $X$ and Player II answers by playing
  $a_1$. Afterwards they continue playing the rest of
  $X = \{ x_0, x_1, \dots \}$ and the theory of a $\varphi$-witness.

  The winning condition in this game $\mathcal{G}_{\varphi,\psi}$ is
  $\Pi^1_2$, so it is determined and we can distinguish the following
  two cases to obtain a contradiction by arguing that no player can
  have a winning strategy.

\medskip
\noindent\paragraph{\bf{Case 1:}}Player I has a winning strategy
$\sigma$ in $\mathcal{G}_{\varphi,\psi}$.

Let $W$ be the transitive collapse of a countable elementary
substructure $Y$ of some large $V_\kappa$ such that $\sigma \in Y$ and
let $\pi$ denote the inverse of the collapse embedding, i.e.,
\[ \pi \colon W \cong Y \prec V_\kappa. \]

Since $\mathbb{R}^W$ is countable, it follows that
$M_1^\sharp(\mathbb{R}^W)$ exists and is $\omega_1$-iterable. Since
the set of $\mathbb{R}^W$ for such elementary substructures $W$ is a
club in $\mathcal{P}_{\omega_1}(\mathbb{R})$, we may assume that
\[\mathbb{R}^W \subsetneq \mathbb{R}\cap M_1(\mathbb{R}^W).\]

The game $\cG_{\varphi,\psi}$ can be defined in $W$. Let
$\bar\sigma \in W$ be such that $\pi(\bar{\sigma}) = \sigma$, i.e.,
$\bar{\sigma} = \sigma \cap W$. By elementarity,
\[ W \vDash \text{``$\bar\sigma$ is a winning strategy for Player I in
    $\mathcal{G}_{\varphi,\psi}$.''} \]

Let $h$ be a well-ordering of $\mathbb{R}^W$ in $V$ of order-type
$\omega$. Consider a play of the game $\mathcal{G}_{\varphi,\psi}$ in
$V$ in which Player II plays some $b\in\BS$ and $x_2, x_4, \dots$
according to $h$ and Player I plays according to the winning strategy
$\sigma$. Every proper initial segment of the play is in the domain of
$\bar\sigma$. It follows that the real part $(x_0, x_1, x_2, \dots)$
of the play, say $p$, enumerates $\mathbb{R}^W$. Furthermore, $p$ is
consistent with $\sigma$, whereby $p$ is won by Player I. This means
that $p$ determines a $\Pi^1_2$-iterable $\mathbb{R}^W$-premouse
$\cN_p$ which is a minimal $\varphi$-witness.

Let $\delta_{\bR^W}$ denote the Woodin cardinal in $M_1(\bR^W)$. Since
we chose $W$ so that
$\mathbb{R}^W \subsetneq \mathbb{R}\cap
M_1(\mathbb{R}^W)|\delta_{\mathbb{R}^W}$ and since satisfying
$\varphi$ for the $\bR^W$-premice $\cN_p$ and
$M_1(\bR^W)|\delta_{\bR^W}$ means having a real which is not in
$\bR^W$, we have that $M_1(\mathbb{R}^W)|\delta_{\bR^W}$ is an
$\omega_1$-iterable $\varphi$-witness. Thus, Lemma
\ref{lem:Pi12omega1itGeneral} implies that $\cN_p$ is
$\omega_1$-iterable as well.

Let $x_0 \in \BS$ be the first move given by $\sigma$ (so $x_0$ is
also the first move given by $\bar\sigma$). Moreover, let
$a_0 \in \omega$ be the first move of Player I after $x_0$ given by
$\sigma$ (and $\bar \sigma$). Let $\tau$ be the real defined by
\[\tau(n) = \bar\sigma(x_0, a_0, n),\] for all possible moves $n \in \omega$
of Player II for $b_0$. We claim that $\tau$ is the least real in
$\cN_p$ not in $\mathbb{R}^W$ which is definable from $x_0$. This will
be a contradiction, since $\tau \in W$ as $\bar\sigma \in W$ and
$x_0\in W$.

Let $\tau^\prime$ be the least real in $\cN_p$ not in $\mathbb{R}^W$
which is definable from $x_0$. Assume that $\tau^\prime \neq \tau$ and
choose some $n_0 \in\omega$ such that
$\tau(n_0) \neq \tau^\prime(n_0)$. Let $q$ be the play of the game
$\mathcal{G}_{\varphi,\psi}$ in which Player I plays according to
$\bar\sigma$ and Player II plays some $b \in \BS$ with first digit
$n_0$ and then $h$ as above. As Player I plays according to
$\bar\sigma$ and hence according to $\sigma$, this is a winning play
for Player I. Let $\cN_q$ be the corresponding model. In particular,
$\cN_q \vDash \psi(x_0,a,b)$, i.e. the least real in $\cN_q$ not in
$\mathbb{R}^W$ which is definable from $x_0$ has
$a_1 = \bar\sigma(x_0, a_0, n_0)$ as $n_0$th digit.

By the rules of the game, $\cN_q$ is a $\Pi^1_2$-iterable
$\bR^W$-premouse which is a minimal $\varphi$-witness. Hence, Lemma
\ref{lem:Pi12omega1itGeneral} yields (as in the case of $\cN_p$) that
$\cN_q$ is in fact $\omega_1$-iterable. So Lemma
\ref{lem:CommonIterate} implies that $\cN_p$ and $\cN_q$ coiterate to
a common model and there is no drop in model on the main branch
through the trees on both sides of the coiteration. By definition,
both $\cN_p$ and $\cN_q$ are pointwise definable from
$\bR^W$. Therefore it is easy to see that in fact $\cN_p = \cN_q$. In
particular, $\cN_p$ and $\cN_q$ have the same least real $\tau^\prime$
definable from $x_0$ which is different from all reals in $\bR^W$ and
by choice of $q$,
$\tau^\prime(n_0) = \bar\sigma(x_0, a_0, n_0) = \tau(n_0)$, which is
the desired contradiction.

\medskip
\noindent\paragraph{\bf{Case 2:}}
Player II has a winning strategy $\sigma$ in
$\mathcal{G}_{\varphi,\psi}$.

Let $W$ be the transitive collapse of a countable elementary
substructure $Y$ of some large $V_\kappa$ such that $\sigma \in Y$ and
let $\pi$ denote the inverse of the collapse embedding. Moreover, let
$\bar \sigma \in W$ be such that $\pi(\bar \sigma) = \sigma$, i.e.,
$\bar \sigma = \sigma \cap W$. Since $\mathbb{R}^W$ is countable, it
follows that $M_1^\sharp(\mathbb{R}^W)$ exists and is
$\omega_1$-iterable in $V$. As before, by our hypothesis we may assume
that
\[\mathbb{R}^W \subsetneq \mathbb{R}\cap M_1(\mathbb{R}^W).\]

Let $\cQ = M_1(\bR^W)|\alpha$, where $\alpha$ is least such that
$\cQ \vDash \zf + $ ``there are no Woodin cardinals'' and $\cQ$
contains a real which is not in $\mathbb{R}^W$. Let $\cN^{*,\cQ}$ be
the definable closure of $\bR^W$ in $\cQ$ and $\cN^\cQ$ the transitive
collapse of $\cN^{*,\cQ}$. Then $\cN^\cQ \prec \cQ$. Thus, there is
some real $z$ in $\cN^\cQ$ which is not in $\bR^W$ such that $z$ is
definable in $\cN^\cQ$ from some real $x_0 \in \bR^W$. We shall ask
Player I to begin every play of the game $\mathcal{G}_{\varphi,\psi}$
by playing this real $x_0$. Assume without loss of generality that $z$
is the least real in $\cN^\cQ\setminus \bR^W$ definable from $x_0$
according to the well-order defined by $\theta(\cdot, x_0, \cdot)$.

Consider the play $p$ in $\mathcal{G}_{\varphi,\psi}$ in which Player
II plays according to $\bar\sigma$ (and hence according to the winning
strategy $\sigma$) and Player I plays:
\begin{enumerate}
\item $x_0 \in \bR^W$, in the first round,
\item $a_1 = z(b_0)$, in response to Player II playing $b_0 \in
  \omega$ according to $\sigma$,
\item other, arbitrary, natural numbers $a_0, a_2, a_3, \dots$,
\item some enumeration $h$ of $\mathbb{R}^{W}$ in order-type $\omega$
  with $h\in V$ as in Case 1, together with the theory of $\cQ$ in the
  language $\langpm$, where the constants $\dot x_i$ are interpreted
  by the reals $x_i \in \bR^W$ according to $p$, satisfying rules
  \eqref{rule:x_1}, \eqref{rule:reals}, and \eqref{rule:dc} of the
  game $\mathcal{G}_{\varphi,\psi}$.
\end{enumerate} 
Arguing as before, one shows that the reals in played in $p$ enumerate
$\bR^W$. It follows that the model $\cN^{\cQ}$ witnesses that $p$ is
a winning play for Player I, which contradicts the fact that $\sigma$
is a winning strategy for Player II. This proves the lemma.
\end{proof}

\begin{remark}
  It is also possible to prove Lemma \ref{LemmaRealsStationary} with
  the following variant of the argument we gave above. Instead of
  playing a code for a theory via $v_i \in \{0,1\}$ for
  $i \in \omega$, we could ask Player I to play a (fine structural)
  code for a premouse $\cN$ projecting to $\omega$ digit by digit via
  $v_i \in \omega$ for $i \in \omega$. In addition, we can let Player
  I play $w_i \in \omega$ together with $v_i$ for each $i \in \omega$
  to play another real $w$ digit by digit. Then we say Player I wins
  if, and only if, the premouse $\cN$ he codes satisfies (a)-(c) in
  Definition \ref{def:Gvarphipsi} where we no longer require $\zf$ in
  the definition of $\varphi$-witness, but ask that $w$ is the minimal
  (in the natural order on formulae and ordinal parameters) real
  definable from $x_0$ over $\cN$ which is not in
  $X = \{x_i \colon i \in \omega\}$ and that no proper initial segment
  of $\cN$ satisfies this property. Then a similar argument as in
  Lemma \ref{lem:Pi12omega1itGeneral} shows that if there is an
  $\omega_1$-iterable model with this property, $\cN$ is in fact
  $\omega_1$-iterable as well. Now a similar argument as in the proof
  of Lemma \ref{LemmaRealsStationary} above shows that this game
  works. Moreover, the same idea can be used to phrase the proof of
  Theorem \ref{TheoremADM1} below differently.
\end{remark}

Finally, we are ready to prove Theorem \ref{TheoremADM1}.

\begin{proof}[Proof of Theorem \ref{TheoremADM1}]
Assume towards a contradiction that we have
\[ M_1(B)|\delta_B \not \models \ad\] for a stationary set of sets
$B \in \mathcal{P}_{\omega_1}(\mathbb{R})$, where $\delta_B$ denotes
the Woodin cardinal in $M_1(B)$. Let
  \[ \varphi \equiv \text{``} \dot X = \bR + \neg \ad \text{''} \] and
  \begin{gather*}
    \psi(x_0,a,b) \equiv \text{``there is a non-determined set of
      reals definable from }x_0 \text{ and } \\ \text{if } Z \text{ is
      the least such set in the well-order relative to } \dot X, \\
    \;\;\;\; \text{ then } a \oplus b \in Z \text{''}.
  \end{gather*}
  This will only be applied in $X$-premice $\cM$ for some
  $X \in \ctbleset$ with $x_0 \in X$ and as in the proof of Lemma
  \ref{LemmaRealsStationary} ``least'' refers to the least set in the
  well-order of elements of $\cM$ definable from $x_0$ which is given
  by $\theta(\cdot, x_0, \cdot)$.

  In this case, the game $\cG_{\varphi,\psi}$ is a variant of the
  Kechris-Solovay game in \cite{KS85} (see also the game in
  \cite[Lemma 2.3]{MSW}) adapted as a model game. The winning
  condition is $\Pi^1_2$, so the game $\cG_{\varphi,\psi}$ is
  determined. We will obtain a contradiction by arguing that no player
  can have a winning strategy.

\medskip
\noindent\paragraph{\bf{Case 1:}}Player I has a winning strategy
$\sigma$ in $\mathcal{G}_{\varphi,\psi}$.

Let $W$ be the transitive collapse of a countable elementary
substructure $Y$ of some large $V_\kappa$ with $\sigma \in Y$ and let
$\pi$ denote the inverse of the collapse embedding, i.e.
\[ \pi \colon W \cong Y \prec V_\kappa. \]
 
Since $\mathbb{R}^W = \bR \cap W$ is countable, it follows that
$M_1^\sharp(\mathbb{R}^W)$ exists and is $\omega_1$-iterable (in
$V$). By Lemma \ref{LemmaRealsStationary}, the set of $\bR^W$ for $W$
as above with the additional property that
$M_1(\bR^W) \cap \bR = \bR^W$ is club in
$\mathcal{P}_{\omega_1}(\mathbb{R})$. By assumption, we may thus
choose $W$ so that $M_1(\bR^W) \cap \bR = \bR^W$ and in addition
\[ M_1(\mathbb{R}^W)|\delta_{\mathbb{R}^W}\not\models\ad. \]

Note that the game $\cG_{\varphi,\psi}$ can be defined in $W$ and let
$\bar\sigma \in W$ be such that $\pi(\bar{\sigma}) = \sigma$, i.e.,
$\bar{\sigma} = \sigma \cap W$. By elementarity,
\[ W \models \text{``$\bar\sigma$ is a winning strategy for Player I
    in $\mathcal{G}_{\varphi,\psi}$.''} \]

Let $h$ be a well-ordering of $\mathbb{R}^W$ in $V$ of order-type
$\omega$. Clearly, every proper initial segment of $h$ is in
$W$. Consider a play of the game $\mathcal{G}_{\varphi,\psi}$ in $V$
in which Player II plays some arbitrary $b^* \in \BS$ and
$x_2, x_4, \dots$ according to $h$ and Player I plays according to the
winning strategy $\sigma$. Every proper initial segment of the play is
in the domain of $\bar\sigma$. It follows that the real part
$(x_0, x_1, x_2, \dots)$ of the play, say $p$, enumerates
$\mathbb{R}^W$. Futhermore, $p$ is consistent with $\sigma$, whereby
$p$ is won by Player I. This means that $p$ determines a
$\Pi^1_2$-iterable $\mathbb{R}^W$-premouse $\cN_p$ which is a minimal
$\varphi$-witness. In particular, $\cN_p \cap \bR = \bR^W$. Since both
$\cN_p$ and $M_1(\mathbb{R}^W)|\delta_{\bR^W}$ are
$\mathbb{R}^W$-premice and $M_1(\mathbb{R}^W)|\delta_{\bR^W}$ is a
$\varphi$-witness, $\cN_p$ is $\omega_1$-iterable by Lemma
\ref{lem:Pi12omega1itGeneral}.

Let $x_0 \in \BS$ be the first move given by $\sigma$ (so $x_0$ is
also the first move given by $\bar\sigma$). Let $Z = Z(x_0,\cN_p)$
denote the least non-determined set of reals in $\cN_p$ which is
definable from $x_0$. This exists since
$\cN_p \vDash \psi(x_0,a^*,b^*)$, where $a^*$ is the sequence of
natural numbers Player I plays after $x_0$ in response to $b^*$
according to $\sigma$. There is a natural strategy $\tau$ for Player I
in $G(Z)$---the Gale-Stewart game with winning condition $Z$ played in
$\cN_p$---which is induced by $\bar\sigma$. Let $\tau$ be the unique
strategy such that for $a,b \in \BS \cap \cN_p$,
\[a = \tau(b) \text{ if, and only if, } (x_0,a) = \bar\sigma(b).\]
Note that $\tau \in W$ as $\bar\sigma, x_0 \in W$ and, since the reals
of $\cN_p$ are those of $W$, we also have $\tau \in \cN_p$.

We claim that $\tau$ is a winning strategy for Player I (in the game
$G(Z)$ in $\cN_p$), which will contradict the fact that the set $Z$ is
non-determined in $\cN_p$. Let $a \oplus b\in \mathbb{R}^W$ be a play
by $\tau$. Let $q$ be the play of the game
$\mathcal{G}_{\varphi,\psi}$ in which Player I plays according to
$\bar\sigma$ and Player II plays $b$ and then $h$ as above. As Player
I plays according to $\bar\sigma$ and hence according to $\sigma$,
this is a winning play for Player I. Let $\cN_q$ be the corresponding
model. In particular, $\cN_q \vDash \psi(x_0,a,b)$, i.e.
$a \oplus b \in Z(x_0,\cN_q)$, where $Z(x_0,\cN_q)$ denotes the least
non-determined set of reals in $\cN_q$ which is definable from $x_0$.

By the rules of the game, $\cN_q$ is a $\Pi^1_2$-iterable
$\bR^W$-premouse which is a minimal $\varphi$-witness. Hence Lemma
\ref{lem:Pi12omega1itGeneral} yields that $\cN_q$ is in fact
$\omega_1$-iterable. So we can apply Lemma \ref{lem:CommonIterate} to
$\cN_q$ and $\cN_p$. In fact, $\cN_p = \cN_q$ as both are pointwise
definable from $\bR^W$. Therefore, $Z = Z(x_0,\cN_p) = Z(x_0,\cN_q)$
and $a \oplus b \in Z$. Hence, $\tau$ is a winning strategy for Player
I, contrary to the fact that $Z$ is non-determined in $\cN_p$.

\medskip
\noindent\paragraph{\bf{Case 2:}} Player II has a winning strategy
$\sigma$ in $\mathcal{G}_{\varphi,\psi}$.

As before, let $W$ be the transitive collapse of a countable
elementary substructure $Y$ of some large $V_\kappa$ with
$\sigma \in Y$ and let $\pi$ denote the inverse of the collapse
embedding, i.e.
\[ \pi \colon W \cong Y \prec V_\kappa. \]
Then $M_1^\sharp(\mathbb{R}^W)$ exists and is $\omega_1$-iterable in $V$.
As before, we may choose $W$ so that
\[M_1(\mathbb{R}^W) \cap \bR = \bR^W \text{ and }
  M_1(\mathbb{R}^W)|\delta_{\mathbb{R}^W}\not\models\ad.\] Let
$\bar\sigma = \sigma\cap W$, so that $\bar\sigma \in W$ and
\[ W\models \text{``$\bar\sigma$ is a winning strategy for Player II in
    $\mathcal{G}_{\varphi,\psi}$.''} \]

Let $\cQ = M_1(\bR^W)|\alpha$, where $\alpha$ is least such that there
are no Woodin cardinals in $\cQ$ and $\cQ \models \zf+\neg\ad$. Let
$\cN^{*,\cQ}$ be the definable closure of $\bR^W$ in $\cQ$ and
$\cN^\cQ$ the transitive collapse of $\cN^{*,\cQ}$. Then
$\cN^\cQ \prec \cQ$ and $\cN^\cQ$ is $\omega_1$-iterable because it is
elementary embedded in the $\omega_1$-iterable premouse
$\cQ$. Moreover, there is some non-determined set in $\cN^\cQ$
definable from some real $x_0 \in \bR^W$. We shall ask Player I to
play this real $x_0$ followed by some real $a$ and some enumeration
$h \in V$ of $\bR^W$ in order-type $\omega$ together with the theory
of $\cQ$ (of course, organized in such a way that he satisfies rules
\eqref{rule:x_1}, \eqref{rule:reals}, and \eqref{rule:dc} of the
game). Let $Z(x_0,\cN^\cQ)$ be as before.

Since $\bar\sigma$ is a winning strategy for Player II in the game
$\mathcal{G}_{\varphi,\psi}$ in $W$, in particular $\bar\sigma$ wins
against all plays in which Player I begins by playing $x_0$. As
before, there is a natural strategy $\tau \in \cN^\cQ$ for Player II
for the Gale-Stewart game inside $\cN^\cQ$ with payoff set
$Z(x_0,\cN^\cQ)$, namely, the unique strategy such that
\[b = \tau(a) \text{ if, and only if, } b = \bar\sigma(x_0, a).\] A
similar argument as in Case 1, using that Player I cannot lose a play
$p$ as above because of having played the wrong theor, but only
because $a\oplus b \notin Z(x_0,\cN_p)$, gives that $\tau$ is a
winning strategy for Player II in $\cN^\cQ$. This finishes the proof
of Theorem \ref{TheoremADM1}.
\end{proof}

\section{Dependent Choices, Scales, and Mouse
  Capturing}\label{SectionDC}

By Theorem \ref{TheoremADM1} we obtain a countable set of reals $A$
such that $M_1(A)$ is an $A$-premouse constructed over its reals and
$M_1(A) \vDash \zf + \ad$ from the assumption that all $\PI^1_2$ games
of length $\omega^2$ are determined. We aim to show that there is a
model with $\omega+1$ Woodin cardinals from this hypothesis. Before we
do that in the next section, we first show some structural properties
of this model.

First, we have that in fact $M_1(A) \vDash \dc$ by the following
theorem, which is a special case of \cite[Theorem 1.1]{Mu} (building
on \cite{St08b} and \cite{Ke84}).

\begin{theorem}\label{thm:M1DC}
  Let $A \in \ctbleset$ be a countable set of reals such that
  $M_1(A) \cap \bR = A$ and $M_1(A) \vDash \zf + \ad$. Then
  $M_1(A) \vDash \dc$.
\end{theorem}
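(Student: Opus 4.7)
The plan is to adapt Kechris's proof \cite{Ke84} that $\ad$ implies $\dc$ in $L(\bR)$, modifying it for the fine structural setting of $M_1(A)$ as in Steel \cite{St08b} and \cite{Mu}.

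First, I would observe that every element of $M_1(A)$ is ordinal definable from parameters in $A \cup \{A\}$. This is because $M_1(A)$ is generated from $A$ via the Jensen hierarchy relative to its extender sequence, and the assumption $M_1(A) \cap \bR = A$ guarantees that no additional reals enter at later stages of this construction. In particular, every set of reals in $M_1(A)$ is $\odr$ from some $a \in A$, and for each such parameter $a$ one has a uniformly definable well-ordering of the class of sets $\od$ from $a$.

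Second, I would establish $\dc_\bR$ inside $M_1(A)$. The Woodin cardinal together with $\ad$ yields, via the Moschovakis periodicity theorems and Steel's scales analysis, that every set of reals in $M_1(A)$ admits a scale. Scales provide uniformization for relations $R \subseteq \bR \times \bR$ with full range, and iterating uniformization $\omega$-many times produces the required $\dc_\bR$-sequence.

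Third, I would bootstrap $\dc_\bR$ to full $\dc$. Given $X \in M_1(A)$ and a relation $R$ on $X$ with $\forall x \in X \exists y \in X\, x R y$, the definability from the first step lets every element of $X$ be coded by a pair $(a,\alpha)$ with $a \in A$ and $\alpha \in \Ord$. Via the Moschovakis Coding Lemma, available since $M_1(A) \vDash \ad$, the ordinal parameter can be replaced by a set of reals, which reduces the search for a $\dc$-sequence in $X$ to the search for a $\dc_\bR$-sequence for a definable relation on $\bR$. Applying the second step and decoding yields the required witness inside $M_1(A)$.

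The main obstacle will be this final reduction. In $L(\bR)$, passing from $\dc_\bR$ to $\dc$ is essentially immediate, but in a fine structural mouse like $M_1(A)$ one has to navigate the stratified construction of the model and verify that scales are available cofinally in its hierarchy so that the Coding Lemma applies at every relevant level. This stratified scale analysis, extending Steel \cite{St08b} and Kechris \cite{Ke84} to premice constructed over countable sets of reals, is precisely what is carried out in the general framework of \cite{Mu}, of which the present theorem is a direct instance.
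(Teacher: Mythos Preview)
Your proposal matches the paper exactly: the paper gives no proof but simply records this as a special case of \cite[Theorem 1.1]{Mu}, building on \cite{St08b} and \cite{Ke84}, which is precisely the Kechris-via-Steel-scales argument you outline. The only imprecision is the claim that \emph{every} set of reals in $M_1(A)$ admits a scale---this fails here just as in $L(\bR)$---but you correctly note in your final paragraph that what is actually required is the cofinal availability of scaled pointclasses in the hierarchy, and that is exactly what the analysis of \cite{St08b} (extended in \cite{Mu}) supplies.
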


In what follows we argue that, in $M_1(A)$, $\Sigma^2_1$ has the scale
property and $\Theta = \theta_0$, i.e. the Solovay sequence is
trivial. Assuming $\ad$, recall that
\[ \Theta = \sup\{\beta : \text{there is a surjection } f \colon \bR
  \rightarrow \beta \},\] and
\[\theta_0 = \sup\{\beta : \text{there is an } \od \text{ surjection
  } f \colon \bR \rightarrow \beta\}.\]

These properties of $M_1(A)$ have proofs similar to those for
$L(\bR)$. E.g., using \cite{St08b} the scale analysis of $L(\bR)$ from
\cite{St08a} can be done inside $M_1(A)$ and yields that
$\Sigma_1^{M_1(A)}$ has the scale property. Moreover, as in $L(\bR)$,
it is easy to see that $(\Sigma^2_1)^{M_1(A)} =
\Sigma_1^{M_1(A)}$. Similarly, $(\Theta = \theta_0)^{M_1(A)}$. In
fact, by generalizing the arguments used for $L(\bR)$, we can also get
that $\ad^+$ holds in $M_1(A)$ but we will not need that. We summarize
this in the next theorem.

\begin{theorem}\label{thm:M1ScalesTheta}
  Let $A \in \ctbleset$ be a countable set of reals such that
  $M_1(A) \cap \bR = A$ and $M_1(A) \vDash \zf + \ad$. Then
  $M_1(A) \vDash \text{``}\Sigma^2_1\text{ has the scale property''} +
  \Theta = \theta_0$.
\end{theorem}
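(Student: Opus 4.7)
The plan is to transfer the classical results known for $L(\bR)$ to the setting of $M_1(A)$, using the fine structural theory of relativized premice from \cite{St08b} together with Steel's scale analysis from \cite{St08a}. Three assertions must be verified: first, that $\Sigma_1^{M_1(A)}$ has the scale property in $M_1(A)$; second, that $(\Sigma^2_1)^{M_1(A)} = \Sigma_1^{M_1(A)}$; and third, that $\Theta = \theta_0$ holds in $M_1(A)$.

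For the scale property, I would follow Steel's inductive construction of scales on $\Sigma_1$-definable sets of reals of $L(\bR)$, systematically replacing $L(\bR)$ by $M_1(A)$. The induction runs along the $J^{\vec E}_\alpha(A)$-hierarchy of $M_1(A)$ and traverses the usual three kinds of levels: ``trivial'' levels where reflection from a higher level produces the scale, endpoints of weak $\Sigma_1$-gaps where scales are assembled from approximating $\Sigma_1$-relations, and inadmissible levels handled via an associated projective-like pointclass. The fine structural tools needed---projecta, standard parameters, master codes, and the coding of $\Sigma_n$-definability via $\Sigma_1$ over master codes---are made available in the relativized premouse setting by \cite{St08b}, and they are robust under the presence of extenders, which is the one feature distinguishing $M_1(A)$ from $L(\bR)$. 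Since $\bR^{M_1(A)} = A$ by hypothesis, the arguments from \cite{St08a} go through level by level, yielding scales on $\Sigma_1^{M_1(A)}$ relations which themselves lie in $M_1(A)$.

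The remaining two assertions rest on the observation that every element of $M_1(A)$ is definable in $M_1(A)$ from an ordinal and an element of $A$, and that $A$ itself is $\Sigma_1$-definable in $M_1(A)$ (as the extension of $\dot X$, which under our hypothesis coincides with $\bR^{M_1(A)}$). For $(\Sigma^2_1)^{M_1(A)} = \Sigma_1^{M_1(A)}$, this allows any quantification over $\mathcal{P}(\bR) \cap M_1(A)$ to be replaced by first-order quantification over ordinals and elements of $A$, which is absorbed into a $\Sigma_1$ prefix, exactly as in $L(\bR)$. For $\Theta = \theta_0$ in $M_1(A)$, given any surjection $f \colon \bR \twoheadrightarrow \beta$ in $M_1(A)$, the definition of $f$ uses some ordinals and a single real parameter $r \in A$; varying this real parameter yields an $\od$ (parameter-free) surjection $h \colon \bR \times \bR \twoheadrightarrow \beta$, which via an $\od$ pairing function produces an $\od$ surjection $\bR \twoheadrightarrow \beta$, showing $\beta < \theta_0$.

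The main technical obstacle is the scale analysis: verifying in detail that the gap reflection phenomena of \cite{St08a}, written exclusively for $L(\bR)$, survive the passage to levels of $M_1(A)$ at which extenders are active. This is handled by the fine structural framework of \cite{St08b}, which packages precisely the uniformity needed for Steel's arguments to relativize without modification.
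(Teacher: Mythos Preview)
Your proposal is correct and follows exactly the approach the paper takes: the paper does not give a standalone proof of this theorem but rather, in the paragraph preceding it, points to \cite{St08b} and \cite{St08a} for transferring Steel's scale analysis of $L(\bR)$ to $M_1(A)$, and remarks that $(\Sigma^2_1)^{M_1(A)} = \Sigma_1^{M_1(A)}$ and $(\Theta = \theta_0)^{M_1(A)}$ hold ``as in $L(\bR)$.'' Your write-up is in fact more detailed than the paper's own justification.
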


Finally, we also have that $M_1(A)$ satisfies \emph{Mouse Capturing}
($\mc$), i.e., that for any two countable transitive sets $x$ and $y$
such that $x \subseteq y$ and $x \in \od_{y \cup \{y\}}$, $x$ is
contained in an $\omega_1$-iterable $y$-premouse. This follows from
\cite[Theorem 1.5]{St16} (due to Woodin).

\begin{theorem}\label{thm:M1MC}
  Let $A \in \ctbleset$ be a countable set of reals such that
  $M_1(A) \cap \bR = A$ and $M_1(A) \vDash \zf + \ad$. Then
  $M_1(A) \vDash \mc$.
\end{theorem}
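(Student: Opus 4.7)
The plan is to invoke Steel's Theorem 1.5 from \cite{St16} (due to Woodin), which gives sufficient structural conditions for Mouse Capturing in a determinacy model generated by its reals. Accordingly, the argument reduces to checking that $M_1(A)$ meets the hypotheses of that theorem.

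First, I would gather the structural facts already established in this section: $M_1(A) \vDash \zf + \dc + \ad$ (by hypothesis together with Theorem \ref{thm:M1DC}), and $M_1(A)$ satisfies both the scale property for $\Sigma^2_1$ and $\Theta = \theta_0$ (Theorem \ref{thm:M1ScalesTheta}). Next, I would verify that $M_1(A)$ has the right overall form: since $M_1(A) \cap \bR = A$ and $M_1(A)$ is an $A$-premouse built via the fine structural hierarchy, every element of $M_1(A)$ is ordinal definable from a real, so $M_1(A)$ is of the \emph{generated-by-its-reals} type required by the theorem. In particular, within $M_1(A)$ every set sits in the $\od_{\bR}$-hull, which is precisely the context to which Woodin's capturing argument applies.

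With these ingredients in place, Steel's theorem yields $\mc^{M_1(A)}$ directly: for each pair of countable transitive sets $x \subseteq y$ in $M_1(A)$ with $x \in \od_{y \cup \{y\}}^{M_1(A)}$, the theorem produces an $\omega_1$-iterable $y$-premouse in $M_1(A)$ containing $x$. The hard part will be being confident that the reference theorem applies to a premouse like $M_1(A)$, whose extender sequence adds structure beyond that of $L(\bR)$. The reason this is not ultimately an obstacle is that the capturing argument under these hypotheses proceeds by a core model induction that uses only the scale property at the $\Sigma^2_1$ level together with $\Theta = \theta_0$; it is insensitive to whether the ambient determinacy model is $L(\bR)$ or a fine structural premouse constructed over its reals, so the proof transfers verbatim.
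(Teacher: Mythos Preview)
Your proposal is correct and takes the same approach as the paper: both simply invoke \cite[Theorem 1.5]{St16} (due to Woodin). In fact the paper gives no proof beyond the one-line citation preceding the theorem statement, so your added verification of the hypotheses (using Theorems \ref{thm:M1DC} and \ref{thm:M1ScalesTheta} and the fact that $M_1(A)$ is generated over its reals) is more detailed than what the paper provides.
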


\section{$\omega+1$ Woodin cardinals} 
\label{SectOmega+1WdnsFromAD}

In this section we will use the results from the previous sections to
construct a premouse with $\omega +1$ Woodin cardinals. More
precisely, we prove the following theorem:

\begin{theorem}\label{thm:omega+1Wdns}
  Suppose there is some $A \in \ctbleset$ such that 
  \begin{enumerate}
  \item $M_1^\sharp(A)$ exists,
  \item $M_1(A) \cap \bR = A$, and
  \item $M_1(A) \vDash \ad$.
  \end{enumerate}
  Then there is an active premouse with
  $\omega+1$ Woodin cardinals.
\end{theorem}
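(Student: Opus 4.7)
The plan is to combine the single Woodin cardinal $\delta_A$ of $M_1(A)$ with $\omega$ additional Woodin cardinals extracted from the rich inner model structure that the AD hypothesis guarantees inside $M_1(A)$. Since $M_1(A)$ satisfies $\zf+\dc+\ad$, $\Theta=\theta_0$, the scale property for $\Sigma^2_1$, and Mouse Capturing (Theorems \ref{thm:M1DC}, \ref{thm:M1ScalesTheta}, \ref{thm:M1MC}), its $\hod$-like inner models admit a fine-structural analysis; this is the source of the extra Woodins. Meanwhile $M_1^\sharp(A)$ provides a top measure, which will supply the active top extender of the final premouse and whose iterability will be inherited by the translation.

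First, working inside $M_1^\sharp(A)$, I would use the mouse capturing and the scale analysis inside $M_1(A)$ to identify an increasing sequence of ordinals $\langle \delta_n : n<\omega\rangle$ below $\delta_A$, together with extender data witnessing (in the $\hod$-mouse sense) that each $\delta_n$ is Woodin in the appropriate $\hod$-like structure. Next, define a Prikry-like partial order $\mathbb{P}$ over $M_1^\sharp(A)$ tailored to select such a sequence $\langle \delta_n:n<\omega\rangle$ as the generic indiscernibles; Prikry-like forcing is the right tool since it adds an $\omega$-sequence without collapsing cardinals and (by the usual Prikry property) does not add bounded subsets, which is exactly what is needed to transport the Woodinness witnesses through to the extension. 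Force with $\mathbb{P}$ to obtain a generic extension $M_1^\sharp(A)[G]$ in which the $\delta_n$ form a cofinal $\omega$-sequence below $\delta_A$ and the extender structure needed at each $\delta_n$ is coded by the generic.

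Finally, perform a translation procedure in the style of Steel's unpublished notes \cite{St} to build, by recursion on levels, an active premouse $\cN$ whose extender sequence interleaves the extenders of $M_1^\sharp(A)$ with the new extenders read off from $G$ that witness Woodinness at each $\delta_n$. The top extender of $M_1^\sharp(A)$ becomes the top extender of $\cN$, so $\cN$ is active; $\delta_A$ is its top Woodin, and the $\delta_n$ are the $\omega$ Woodins below, giving $\omega+1$ in total. Iterability of $\cN$ is lifted from the $\omega_1$-iterability of $M_1^\sharp(A)$ via the translation. The main obstacle I expect is the translation step: verifying that the extenders appended at the $\delta_n$ satisfy coherence and the initial segment condition, and that $\cN$ is sound, tame, and iterable. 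This is where the combination of the Prikry genericity (ensuring the new extender data is definable and fine-structurally coherent) with the $\hod$-mouse analysis in $M_1(A)$ (ensuring the Woodinness of each $\delta_n$) must be carefully reconciled.
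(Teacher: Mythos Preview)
Your high-level shape is right --- Prikry-type forcing to produce $\omega$ Woodin cardinals inside $M_1(A)$, then a translation that adds the Woodin of $M_1(A)$ as the $(\omega+1)$st, with the top extender of $M_1^\sharp(A)$ on top --- and this is exactly the paper's strategy. But several load-bearing ideas are either missing or misdescribed.

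First, the Prikry conditions are not sequences of ordinals $\delta_n$; they are sequences of \emph{suitable premice} $\cQ_n$, each produced by a simultaneous pseudo-comparison and pseudo-genericity iteration over a Turing cone (the models $\cR^x_a$). The point is that the generic object $\cQ_\infty = \bigcup_n \cQ_n$ is already a premouse with its own extender sequence; the $\omega$ Woodin cardinals and the extenders witnessing them are not ``read off from $G$'' and appended later, they are already present in $\cQ_\infty$. What the Prikry property buys is that $\cP(\delta_n)\cap L[\vec\cQ]\subseteq\cQ_n$, so each $\delta_n$ remains Woodin in $L[\vec\cQ]$; this hinges on the $\od$-characterisation of $\cP(a)\cap\cR^x_a$ coming from Mouse Capturing and the $\Sigma^2_1$ tree $T$.

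Second, your proposal skips the step that actually lets the translation go through. Before one can run a $\cP$-construction above $\cQ_\infty$ using the extenders of $M_1(A)$, one must show that a suitable level of $L[\cQ_\infty]$ and of $M_1(A)$ agree after small forcing. The paper does this by collapsing $\lambda=\sup_n\delta_n$, computing the derived model of $\cQ_\infty$, and proving $\bR^*_h=\bR^V$ and $\Hom^*_h=\DELTA^2_1$; from this one gets $L[\cQ_\infty][h]=L(U,A)[G][h]$, which is what makes $M_1(A)$'s extenders cohere with $\cQ_\infty$. Without this agreement there is no reason to expect the extenders of $M_1(A)$ to satisfy coherence or the initial-segment condition over $\cQ_\infty$. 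Finally, the translation is not an interleaving: the $\cP$-construction places extenders from $M_1(A)$ strictly \emph{above} $\lambda$, leaving $\cQ_\infty$ untouched below.
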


Using Section \ref{SectionDC}, for the rest of this section, we fix a
countable set of reals $A$ such that $M_1(A)$ is an
$\omega_1$-iterable $A$-premouse with $M_1(A) \cap \bR = A$ and
\[ M_1(A) \vDash \zf + \dc + \ad + \text{``}\Sigma^2_1\text{ has the
    scale property''} + \Theta = \theta_0 + \mc. \]

The rest of this section is devoted to the proof of Theorem
\ref{thm:omega+1Wdns}. Most of the proof in this section closely
follows ideas from Section 3 in the unpublished notes \cite{St}. See
also Sections 6.5 and 6.6 in \cite{StW16} for a similar argument
applied to $L(\bR)$ or \cite{SaSt15}. We start by introducing some
notation, generalizing ideas from \cite[Section 3]{StW16} and
\cite{SchlTr} to our context. Suppose that $A$ is as in the statement
of Theorem \ref{thm:omega+1Wdns} and work inside $V = M_1(A)$.

The proof of Theorem \ref{thm:omega+1Wdns} can be split in several
parts. After we recall a useful standard fact, we define suitable
premice. From these we will, by pseudo-comparison and
pseudo-genericity iteration, obtain models which we can use in a
Prikry-like forcing to construct a model with $\omega$ Woodin
cardinals. Then we argue that this model can be rearranged into a
premouse on top of which we can perform a $\cP$-construction to add
one more Woodin cardinal.
 
The following standard lemma will be useful later on:

\begin{lemma} \label{lem:propL[T,a]} There is a $\Sigma^2_1$ scale
  $\vec \phi$ on a $\Sigma^2_1$ set which is universal for
  $\SIGMA^2_1$ such that, letting $T$ be the tree obtained from
  $\vec \phi$, we have for any countable transitive set $a$,
 \begin{align*}
    \mathcal{P}(a)\cap L[T,a]
                                            &= \mathcal{P}(a)\cap
                                              \od_{a \cup\{a\}} \\
                                            &= \{b\in H(\omega_1): b
                                              \text{ belongs to an
                                              $\omega_1$-iterable
                                             $a$-premouse}\}.
 \end{align*}
\end{lemma}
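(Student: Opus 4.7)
The plan is to establish both equalities inside $M_1(A)$ using the $\Sigma^2_1$-scale property, $\Theta = \theta_0$, and Mouse Capturing, all of which hold in $M_1(A)$ by Theorems \ref{thm:M1ScalesTheta} and \ref{thm:M1MC}. First I would fix a universal $\Sigma^2_1$ set $U\subseteq \bR$ and, using the $\Sigma^2_1$-scale property, a $\Sigma^2_1$-scale $\vec\phi = \langle \phi_i : i<\omega\rangle$ on $U$. Let $T$ be the tree of $\vec\phi$ on $\omega\times\kappa$ for the appropriate ordinal $\kappa < \Theta$. The crucial point is that both $U$ and $\vec\phi$ are canonically obtained from the pointclass $\Sigma^2_1$, so the tree $T$ is ordinal definable without parameters. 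In particular $L[T,a]\subseteq \od_{a\cup\{a\}}$ for every transitive $a$, which already gives the $\subseteq$-direction of the first equality.

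For the reverse inclusion, I would argue as in the classical $L(\bR)$-setting, one pointclass higher. Take $b\in \mathcal{P}(a)\cap \od_{a\cup\{a\}}$. Under $\Theta = \theta_0$, every set of reals is OD from a real, and an application of Moschovakis' coding lemma (together with the universality of $U$) shows that, coding $a$ as a real, $b$ is $\SIGMA^2_1$ in that code. Since $T$ is the tree of a $\Sigma^2_1$-scale on a universal $\Sigma^2_1$ set, $\Sigma^2_1$-truth with parameters in $a$ is absolute between $V$ and $L[T,a]$: the $\Sigma^2_1$-definable set of reals coded by $b$ can be computed inside $L[T,a]$ by reading off $T$. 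Hence $b\in L[T,a]$, finishing the first equality.

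For the second equality, I would use Mouse Capturing directly. If $b\in H(\omega_1)$ belongs to an $\omega_1$-iterable $a$-premouse $\cN$, then $b$ is definable in $\cN$ from $a$ and parameters via the canonical fine-structural well-order of $\cN$, and since iterable premice over $a$ are themselves OD from $a$, we get $b\in \od_{a\cup\{a\}}$. Conversely, if $b\in \mathcal{P}(a)\cap \od_{a\cup\{a\}}$, then by the version of $\mc$ from Theorem \ref{thm:M1MC} there is an $\omega_1$-iterable $a$-premouse containing $b$. Combined with the first equality, this yields the second.

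The main obstacle is the $\supseteq$ direction of the first equality, i.e.\ showing $\od_{a\cup\{a\}}\subseteq L[T,a]$. It requires the coding-lemma step to see that every OD-from-$a$ set is $\SIGMA^2_1(a)$ (using $\Theta=\theta_0$), together with careful $\Sigma^2_1$-absoluteness between $V$ and $L[T,a]$. Everything else is a direct translation of well-known $L(\bR)$ arguments to the slightly richer model $M_1(A)$, which is legitimate because Theorems \ref{thm:M1DC}--\ref{thm:M1MC} give $M_1(A)$ precisely the descriptive-set-theoretic features needed.
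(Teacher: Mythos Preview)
Your treatment of the second equality via Mouse Capturing is fine and matches the paper. For the inclusion $\mathcal{P}(a)\cap L[T,a]\subseteq \od_{a\cup\{a\}}$, the paper argues differently: rather than asserting that a canonical choice of $U$ and $\vec\phi$ makes $T$ itself ordinal definable, it invokes the Harrington--Kechris result that the reals of $L[T,a]$ are independent of the particular choice of universal set and scale (see \cite{HK81} and \cite[Exercise 8G.29]{Mo09}). Your version can be made to work, but ``canonically obtained'' hides the content.

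The genuine gap is in the direction $\mathcal{P}(a)\cap\od_{a\cup\{a\}}\subseteq L[T,a]$. Your appeal to ``coding lemma plus $\Sigma^2_1$-absoluteness'' does not explain why an individual $b\in\od_{a\cup\{a\}}$ is $\Sigma^2_1$-definable from a code $z$ for $a$, nor why such a definition can be evaluated in $L[T,a]$, which has far fewer reals than $V$. The paper's argument is more concrete: fixing a real $z$ coding $a$, the set $B$ of (codes relative to $z$ for) all $\od_{a\cup\{a\}}$ subsets of $a$ is a $\Sigma^2_1(z)$ set of reals; since $a$ is countable, $B$ is countable and hence thin, so the Mansfield--Solovay theorem gives $B\subseteq L[T,z]$. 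One then needs a further step you omit entirely: this places $b$ in $L[T,z]$ for every real $z$ coding $a$, and since all such $z$ arise as $\col(\omega,a)$-generics over $L[T,a]$, it follows that $b\in L[T,a]$. Without the Mansfield--Solovay step (or an equivalent thinness/basis argument) and the genericity step, your outline does not close.
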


\begin{proof}
  The second equality easily follows from $\mc$. For the first
  equality, let $U\subset\bR^2$ be any $\Sigma^2_1$ set that is
  universal for $\SIGMA^2_1$. $\Sigma^2_1$ has the scale property, so
  let $T$ be a tree on $\omega\times\omega\times\delta^2_1$ obtained
  from a $\Sigma^2_1$-scale on $U$ (thus $T$ projects to
  $U$). 

  Now, suppose $b \in\mathcal{P}(a)\cap \od_{a \cup\{a\}}$. Let $z$ be
  a real coding $a$. Then the set $B$ of all $\od_{a \cup\{a\}}$
  subsets of $a$ (coded as a real relative to $z$) is
  $\Sigma^2_1(z)$. By the Mansfield-Solovay Theorem (see for example
  \cite[Theorem 11.1]{KM08}), either $B \subseteq L[T,z]$, or $B$ contains a perfect
  subset. But since $B$ is countable, it is thin. Therefore $b_z$, the
  real coding $b$ relative to $z$, is in $L[T,z]$ and hence
  $b \in L[T,z]$. Since this holds for all $z$ which are
  $\col(\omega,a)$-generic over $L[T,a]$, it follows that
  $b \in L[T,a]$.
  
  Conversely, every real in $L[T,a]$ is definable from $a$ and ordinal
  parameters. This is because the reals of $L[T,a]$ do not depend on
  the choice of the universal set $U$ nor on the scale on $U$ (this
  follows e.g., from \cite[Exercise 8G.29]{Mo09}, see also
  \cite{HK81}).
\end{proof}

Fix a $\Sigma^2_1$ set $U$ which is universal for $\SIGMA^2_1$ in
$V = M_1(A)$ and a tree $T$ as above for the rest of this section.

\subsection{Suitable premice}

We begin by isolating a class of models suitable for our purposes.

\begin{definition}
Suppose $b$ is a countable transitive set. We write
\[Lp(b) = \bigcup\big\{M: \text{$M$ is a sound $\omega_1$-iterable
    $b$-premouse such that } \rho_\omega(M) = b \big\}.\] Moreover, we
inductively define $Lp^{1}(b) = Lp(b)$,
\[Lp^{n+1}(b) = Lp(Lp^{n}(b)),\] and
\[Lp^{\omega}(b) = \bigcup_{n<\omega} Lp^{n}(b).\]
\end{definition}

\begin{remark}
  Recall that we are working inside $M_1(A)$, which is a model of
  $\ad$, so the club filter is an ultrafilter on $\omega_1$. This can
  be used to show that $\omega_1$-iterability already implies
  $\omega_1+1$-iterability (see e.g. \cite[Lemma 7.11]{St10} for
  details), so that any two $\omega_1$-iterable $b$-premice $M$ and
  $N$ as in the definition of $Lp(b)$ can be successfully compared and
  line up, i.e $M \unlhd N$ or $N \unlhd M$. Therefore, $Lp(b)$ is a
  well-defined premouse.
\end{remark}

In the definitions below, let $a$ be an arbitrary countable transitive
set. 

\begin{definition}
  We say that an $a$-premouse $M$ is \emph{suitable} if, and only if,
  there is an ordinal $\delta$ such that
\begin{enumerate}
\item $M$ is a model of $\zfc$ - ``Replacement'' and
  $M \cap \Ord = \sup_{n<\omega} (\delta^{+n})^M$,
\item $\delta$ is the unique Woodin cardinal in $M$, and
\item $M$ is \emph{full}, i.e. for every cutpoint\footnote{In the case
    where $\eta$ is not a cutpoint, we refer to the *-transformation
    in \cite[Section 7]{St08}.} $\eta$ in $M$, $Lp(M|\eta) \unlhd M$.
\end{enumerate}
\end{definition}

If $M$ is suitable, we denote its Woodin cardinal by $\delta_M$.

\begin{lemma} \label{lem:absolSuitableLxT} Let $M$ be a countable
  $a$-premouse and $x_M$ a real coding $M$. Then for any real
  $z \geq_T x_M$, the statement ``$M$ is suitable'' is absolute
  between $V$ and $L[T,z]$.
\end{lemma}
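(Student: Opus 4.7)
The plan is to verify the absoluteness of each of the three clauses defining suitability. Clauses (1) and (2) are first-order properties of the structure $M$ itself: clause (1) asserts that $M \vDash \zfc$ minus Replacement together with the height condition $M \cap \Ord = \sup_n (\delta^{+n})^M$, and clause (2) asserts that a specific $\delta \in M$ is the unique Woodin cardinal of $M$. Since $L[T,z]$ is transitive and contains $M$ as an element (because $z \geq_T x_M$), both clauses are computed identically in $V$ and in $L[T,z]$.

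The substantive content is clause (3), fullness. Since $M$ is countable, so is each $M|\eta$ for a cutpoint $\eta$, and the set of cutpoints of $M$ is first-order over $M$, hence the same in $V$ and in $L[T,z]$. Fix such a cutpoint $\eta$ and write $a = M|\eta$; the plan is to argue that $Lp^V(a) = Lp^{L[T,z]}(a)$, which together with the absoluteness of $\unlhd$ yields the absoluteness of fullness.

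For the forward inclusion, any sound $\omega_1$-iterable $a$-premouse $N$ projecting to $a$ is countable and coded by a subset of $a$ in $V$; under $\ad + \dc$ the comparison performed inside $V = M_1(A)$ makes $N$ unique given its ordinal height, so this coding subset is $\od_{a \cup \{a\}}$, and Lemma \ref{lem:propL[T,a]} places it in $L[T,a] \subseteq L[T,z]$. By $\mc$ (Theorem \ref{thm:M1MC}) the canonical iteration strategy for $N$, restricted to countable trees, is itself $\od_{a \cup \{a\}}$ and hence also lies in $L[T,z]$, so $L[T,z]$ correctly witnesses the $\omega_1$-iterability of $N$. For the reverse inclusion, any $N \in L[T,z]$ that $L[T,z]$ verifies to be a sound, $\omega_1$-iterable $a$-mouse projecting to $a$ is countable in $V$; a winning strategy against $N$ in the iteration game in $V$ would, by $\mc$ and the universality of $T$ for $\SIGMA^2_1$, be captured by a subset of $a$ visible in $L[T,z]$, contradicting the iterability witness there.

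The main obstacle is thus the absoluteness of $\omega_1$-iterability between $V$ and $L[T,z]$, which is handled by combining Mouse Capturing with the correctness of $T$ on $\SIGMA^2_1$ sets furnished by Lemma \ref{lem:propL[T,a]}; the identification of the two versions of $Lp(a)$ then falls out. Once this is in hand, fullness is absolute at each cutpoint and the three clauses together give the absoluteness of suitability, completing the proof.
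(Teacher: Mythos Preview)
Your argument has a genuine gap in the handling of clause~(3). In the forward inclusion you appeal to Lemma~\ref{lem:propL[T,a]} to place ``the canonical iteration strategy for $N$, restricted to countable trees'' into $L[T,z]$, but that lemma only gives $\mathcal{P}(a)\cap\od_{a\cup\{a\}} \subseteq L[T,a]$; an $\omega_1$-iteration strategy is a function on $H_{\omega_1}$, not a subset of $a$, so the conclusion does not follow as stated. (A repair is possible: for each individual tree $\cT \in L[T,z]$, the correct branch is $\od$ from the countable transitive set coding $(N,\cT)$, and the lemma applied to \emph{that} set puts the branch into $L[T,z]$; but this is not the argument you gave, and the uniformity needed to assemble a strategy still has to be checked.) Your reverse inclusion is likewise vague: a bad tree in $V$ witnessing non-iterability of $N$ could have length $\geq \omega_1^{L[T,z]}$, so there is no reason it is ``visible in $L[T,z]$'' in any direct sense.

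The paper's argument avoids all of this by working at the level of pointclass complexity rather than transferring strategies. The failure of fullness---``there exist $\eta$ and a sound $\omega_1$-iterable $M|\eta$-premouse $N$ with $\rho_\omega(N)=M|\eta$ and $N\ntrianglelefteq M$''---is $\SIGMA^2_1$ in a real coding $M$: the outer existential over $N$ is an existential over reals, and ``$N$ is $\omega_1$-iterable'' is witnessed by an iteration strategy coded as a set of reals. Since $T$ is the tree of a scale on a universal $\SIGMA^2_1$ set, $L[T,z]$ is $\SIGMA^2_1$-correct, and the failure of fullness transfers in both directions immediately. This single observation replaces the entire $Lp$-matching argument you attempt.
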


\begin{proof}
  Suppose not, say $M$ is suitable in $L[T,z]$ but there is some
  $\eta < M \cap \Ord$ and a sound $\omega_1$-iterable
  $M|\eta$-premouse $N$ in $V$ with $\rho_\omega(N) = M|\eta$ such
  that $N \ntrianglelefteq M$. This statement is $\SIGMA^2_1$ and
  hence such a counterexample would also exist in $L[T,z]$. By the
  same argument, if we suppose that $M$ is suitable in $V$, every such
  $\omega_1$-iterable $M|\eta$-premouse in $L[T,z]$ is also
  $\omega_1$-iterable in $V$. Hence the statement ``$M$ is suitable''
  is absolute between $V$ and $L[T,z]$.
\end{proof}

\begin{definition}
  Let $\cT$ be a normal iteration tree on a suitable $a$-premouse $M$
  of length $< \omega_1^V$. Then we say that $\cT$ is \emph{correctly
    guided} if, and only if, for every limit ordinal $\lambda < \lh(\cT)$, if $b$
  is the branch choosen through $\cT \upharpoonright \lambda$ in
  $\cT$, then $\cQ(b, \cT \upharpoonright \lambda)$ exists and
  $\cQ(b, \cT \upharpoonright \lambda) \unlhd Lp(\cM(\cT
  \upharpoonright \lambda))$.
\end{definition}

\begin{definition}
  Let $\cT$ be a normal iteration tree on a suitable $a$-premouse $M$
  of length $< \omega_1^V$. Then we say that $\cT$ is \emph{short} if,
  and only if, $\cT$ is correctly guided and if $\cT$ has limit
  length, then $\cQ(\cT)$ exists, and $\cQ(\cT) \unlhd
  Lp(\cM(\cT))$. If $\cT$ is correctly guided but not short, then it
  is said to be \emph{maximal}.
\end{definition}

As in \cite{SchlTr}, we define the notion of being suitability-strict
in order to make the proofs of Lemmas
\ref{lem:PseudoComparisonInL[x,T]} and \ref{lem:PseudoGenItInL[x,T]}
below work.

\begin{definition}
  Let $M$ be a suitable $a$-premouse and let $\cT$ be a normal
  iteration tree on $M$ of length $< \omega_1^V$. Then we say that
  $\cT$ is \emph{suitability-strict} if, and only if, for all
  $\alpha < \lh(\cT)$,
  \begin{enumerate}[$(i)$]
  \item if $[0, \alpha]_T$ does not drop then $\cM_\alpha^\cT$ is
    suitable, and
  \item if $[0, \alpha]_T$ drops then no $\cR \unlhd \cM_\alpha^\cT$
    is suitable.
  \end{enumerate}
\end{definition}

\begin{definition}
  Let $M$ be a suitable $a$-premouse. Then we say that $M$ is
  \emph{short tree iterable} if, and only if, whenever $\cT$ is a
  short tree on $M$ of length $< \omega_1^V$,
  \begin{enumerate}[$(i)$]
  \item $\cT$ is suitability-strict,
  \item if $\cT$ has a last model, then every putative iteration tree
    $\cU$ extending $\cT$ such that $\lh(\cU) = \lh(\cT) +1$ has a
    well-founded last model, and
  \item if $\cT$ has limit length, then there exists a cofinal
    well-founded branch $b$ through $\cT$ such that
    $\cQ(b,\cT) = \cQ(\cT)$.
  \end{enumerate}
\end{definition}

\begin{definition}
  Suppose $M$ is a suitable $a$-premouse. We say $R$ is a
  \emph{pseudo-normal iterate} of $M$ if, and only if, $R$ is suitable
  and there is a normal iteration tree $\cT$ on $M$ such that either
  $\cT$ has successor length and $R$ is the last model of $\cT$ or
  $\cT$ is maximal and $R = Lp^{\omega}(\cM(\cT))$.
\end{definition}

As usual, this notion can easily be generalized to stacks of normal
trees, but we omit the technical details. The interested reader can
find them in a different setting for example in \cite{StW16},
\cite{Sa13}, or \cite{MuSa}. The following lemmas are the analogues of
Theorems 3.14 and 3.16 in \cite{StW16}. The proofs are similar to the
ones in \cite{StW16} and \cite{SchlTr} and use absoluteness as in the
proof of Lemma \ref{lem:absolSuitableLxT}; we omit further details.

\begin{lemma}[Pseudo-comparison]\label{lem:PseudoComparisonInL[x,T]}
  Suppose $M$ and $N$ are countable short tree iterable, suitable
  $a$-premice. Then, they have a common pseudo-normal iterate $R$ such
  that $R \in L[T,z]$, where $z$ is a real coding $M$ and
  $N$. Moreover,
  $\delta_{R} \leq (\max\{\delta_M,\delta_N\}^+)^{L[T,z]} =
  \omega_1^{L[T,z]}$. 
\end{lemma}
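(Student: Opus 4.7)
The plan is to carry out the coiteration of $M$ and $N$ inside $L[T,z]$ using least extender disagreement, and show that short tree iterability, together with the absoluteness properties of suitability and $Lp$, allow the construction to be executed and terminated in $L[T,z]$.

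First I would observe that, for any countable transitive set $b$ coded by a real in $L[T,z]$, the structure $Lp(b)$ is computable in $L[T,z]$: by Lemma \ref{lem:propL[T,a]}, the sound $\omega_1$-iterable $b$-premice projecting to $b$ are precisely those whose set of ordinals corresponds to a subset of $b$ in $L[T,b] \subseteq L[T,z]$. Combined with Lemma \ref{lem:absolSuitableLxT}, which ensures suitability is absolute between $V$ and $L[T,z]$ for countable premice, this lets $L[T,z]$ correctly recognize short trees and identify correct $\mathcal{Q}$-structures.

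Next I would define the coiteration trees $(\mathcal{T},\mathcal{U})$ on $M$ and $N$ inside $L[T,z]$ by the usual comparison rules. At a limit stage $\lambda$, provided $\mathcal{T}\upharpoonright\lambda$ is short, the unique cofinal branch $b$ with $\mathcal{Q}(b,\mathcal{T}\upharpoonright\lambda)=\mathcal{Q}(\mathcal{T}\upharpoonright\lambda)$ guaranteed by short tree iterability is also the \emph{unique} cofinal well-founded branch whose $\mathcal{Q}$-structure agrees with the $L[T,z]$-computable object $\mathcal{Q}(\mathcal{T}\upharpoonright\lambda)\unlhd Lp(\mathcal{M}(\mathcal{T}\upharpoonright\lambda))$, so $L[T,z]$ can select it. The same applies to $\mathcal{U}$. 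The coiteration therefore proceeds inside $L[T,z]$ until one of three things happens: (a) both sides remain short and we reach a successor stage at which the models line up, yielding a common last model $R$; (b) $\mathcal{T}$ becomes maximal at some stage $\lambda$; (c) $\mathcal{U}$ becomes maximal at some stage $\lambda$. In cases (b) and (c), one side of the coiteration has produced a common part $\mathcal{M}(\mathcal{T}\upharpoonright\lambda)=\mathcal{M}(\mathcal{U}\upharpoonright\lambda)$ (since below the maximal stage the models agree on their common part), and we set $R = Lp^\omega(\mathcal{M}(\mathcal{T}\upharpoonright\lambda))$; this is the pseudo-normal iterate on both sides.

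The length bound is obtained via the standard reflection argument: if the coiteration had length $\geq\omega_1^{L[T,z]}$, a Skolem-hull/reflection argument inside $L[T,z]$ would produce a strictly decreasing sequence of ordinals, a contradiction; hence $R$ has $\delta_R\leq\omega_1^{L[T,z]}$, and in fact $\delta_R\leq(\max\{\delta_M,\delta_N\}^+)^{L[T,z]}=\omega_1^{L[T,z]}$ (the final equality holds because $M,N$ are countable and their Woodins are collapsed in $L[T,z]$).

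The main obstacle is verifying that $R$ is genuinely suitable in both cases (b), (c). In case (a) this is immediate from suitability-strictness. In the maximal case, one must argue that $Lp^\omega(\mathcal{M}(\mathcal{T}\upharpoonright\lambda))$ is $\omega_1$-iterable and full above the candidate Woodin $\delta = \delta(\mathcal{T}\upharpoonright\lambda)$, that $\delta$ remains Woodin in $R$ (this is the standard fact that the common part of a maximal tree has a Woodin cardinal in its $Lp$-closure), and that $R$ has the right height $\sup_{n<\omega}(\delta^{+n})^R$. These follow by adapting the analogous arguments from \cite{StW16} and \cite{SchlTr}, replacing $L(\mathbb{R})$ there with our $M_1(A)$ and using that $\mathcal{M}(\mathcal{T}\upharpoonright\lambda)$, being a countable object in $L[T,z]$, has all relevant $Lp$-extensions correctly computed by $L[T,z]$.
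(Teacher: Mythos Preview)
Your proposal is correct and follows essentially the same approach the paper indicates: the paper omits the proof, pointing only to the absoluteness argument of Lemma~\ref{lem:absolSuitableLxT} and the analogous proofs in \cite{StW16} and \cite{SchlTr}, which is precisely the route you take---compute $Lp$ and $\cQ$-structures inside $L[T,z]$ via Lemma~\ref{lem:propL[T,a]}, run the coiteration there, and handle the maximal case by appealing to the cited references. One small point worth making explicit: in your cases (b) and (c), if one tree becomes maximal at stage $\lambda$ then so does the other, since $\cM(\cT\upharpoonright\lambda)=\cM(\cU\upharpoonright\lambda)$ in a least-disagreement coiteration and a $\cQ$-structure for one would serve as a $\cQ$-structure for the other (the paper uses exactly this observation later, in the proof of Claim~\ref{cl:ComparisonMaximal}).
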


\begin{lemma}[Pseudo-genericity iteration]\label{lem:PseudoGenItInL[x,T]}
  Let $M$ be a countable, short tree iterable, suitable
  $a$-premouse. Then for a cone of reals $z$, $M$ is countable in
  $L[T,z]$, and there is a non-dropping pseudo-normal iterate $R$ of
  $M$ in $L[T,z]$ such that $z$ is generic over $R$ for Woodin's
  extender algebra at $\delta_R$, and
  $\delta_{R} = \omega_1^{L[T,z]}$.
\end{lemma}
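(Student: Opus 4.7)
The plan is to carry out the standard genericity iteration of $M$ with respect to $z$ \emph{inside} $L[T,z]$, guided by $\cQ$-structures from the $Lp$-hierarchy, which are absolute to $L[T,z]$ by Lemma \ref{lem:propL[T,a]}. Fix a real $x_0$ coding both $M$ and the transitive set $a$, and let the cone be $\{z : z \geq_T x_0\}$. For any such $z$, the premouse $M$ is countable in $L[T,z]$, suitability is absolute between $V$ and $L[T,z]$ by Lemma \ref{lem:absolSuitableLxT}, and for every countable transitive $b\in L[T,z]$ we have $(Lp(b))^V = (Lp(b))^{L[T,z]}$, so the $\cQ$-structures that guide short trees on $M$ are correctly identified inside $L[T,z]$.

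Working inside $L[T,z]$, I would recursively build a normal iteration tree $\cT$ on $M$ using Woodin's extender algebra $\mathbb{B}_\delta$ at the current Woodin cardinal $\delta = \delta_{\cM_\alpha^\cT}$: at successor stages, if $z$ fails to satisfy some condition in $\mathbb{B}^{\cM_\alpha^\cT}_{\delta}$ that can be decided by an extender on the sequence of $\cM_\alpha^\cT$, apply the least such extender; at a limit stage $\lambda$, either $\cT\upharpoonright\lambda$ is short, in which case short tree iterability of $M$ provides a cofinal well-founded branch $b$ with $\cQ(b,\cT\upharpoonright\lambda) = \cQ(\cT\upharpoonright\lambda)$ and this branch is definable inside $L[T,z]$ by searching for any cofinal branch realizing $\cQ(\cT\upharpoonright\lambda)$ (an absolute property), or else $\cT\upharpoonright\lambda$ is maximal, in which case we halt and set $R = Lp^\omega(\cM(\cT\upharpoonright\lambda))$. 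In either case, $R$ is a non-dropping pseudo-normal iterate of $M$ in $L[T,z]$.

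The termination and cardinality claims follow from a counting argument: the tree lies entirely in $L[T,z]$, so its length is at most $\omega_1^{L[T,z]}$; conversely, standard genericity-iteration bookkeeping, using the $\delta_{\cM_\alpha^\cT}$-c.c. of the extender algebra at each stage and the fact that $z$ introduces boundedly many bad conditions, forces the iteration to process cofinally many stages below $\omega_1^{L[T,z]}$ before $z$ becomes generic, so $\delta_R$ is pushed up to $\omega_1^{L[T,z]}$. The main obstacle is the limit-stage analysis: one must argue that the true short-tree iteration strategy on $M$ in $V$ and the strategy computed by $Lp$-guidance inside $L[T,z]$ agree, which is exactly where short tree iterability combined with the absoluteness of $Lp$ comes in. A secondary point is verifying, in the maximal case, that $Lp^\omega(\cM(\cT))$ is genuinely suitable (full, with unique Woodin cardinal $\delta(\cT)$) so that it can serve as the output $R$ of the iteration.
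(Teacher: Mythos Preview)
The paper does not give a proof of this lemma; it merely points to \cite{StW16} and \cite{SchlTr} and says the argument is as there, using absoluteness as in Lemma~\ref{lem:absolSuitableLxT}. Your sketch is precisely that standard argument: run the extender-algebra genericity iteration inside $L[T,z]$, using Lemma~\ref{lem:propL[T,a]} to see that $Lp$ (and hence the $\cQ$-structure-guided branch choice) is computed correctly there, and invoke short tree iterability of $M$ in $V$ at short limit stages. So in outline your approach matches what the paper intends.

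There is, however, a real gap in your justification of $\delta_R = \omega_1^{L[T,z]}$. You write that ``bookkeeping\ldots forces the iteration to process cofinally many stages below $\omega_1^{L[T,z]}$ before $z$ becomes generic.'' That is not true on general grounds: a genericity iteration for a single real can perfectly well terminate at a stage countable in $L[T,z]$ (indeed, $z$ might already be generic over $M$). What rules this out here is an absorption argument, not bookkeeping: if the process halted with $R$ countable in $L[T,z]$ and $z$ generic over $R$ for the extender algebra at $\delta_R$, then by fullness of $R$ together with Lemma~\ref{lem:propL[T,a]} one shows (via a $\cP$-construction as in Subclaim~\ref{SubclaimR*[x]}) that $\bR\cap L[T,z]\subseteq R[z]$; hence $\delta_R$ is countable in $R[z]$, contradicting the $\delta_R$-c.c.\ of the extender algebra. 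The paper in fact spells out exactly this argument later, in the proof of Lemma~\ref{lem:propRax} (Claim~\ref{cl:ComparisonMaximal} and its subclaim, and clause~\eqref{eq:omega1delta}). You should replace the ``bookkeeping'' sentence with this reflection argument.
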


\subsection{The models $\cR^x_a$}
As before, let $a$ be an arbitrary countable transitive set. Let $x$
be a real such that $a \in L[x]$.  We consider the simultaneous
pseudo-comparison of all short tree iterable, suitable $a$-premice
coded by some real $z \leq_T x$. We carry out this pseudo-comparison
while at the same time performing a pseudo-genericity iteration making
every $z \leq_T x$ generic over the common part of the final
model. Note that there are only countably many reals $z \leq_T x$ for
every fixed real $x$ and let $\cR^- = \cR_a^{x,-}$ denote the
resulting model. I.e., either $\cR^-$ is the common last model of the
iteration trees obtained by the process described above, or all of
these trees are maximal and $\cR^-$ is the common part model of one
(and hence all) of these trees. Moreover, let
$\cR = \cR_a^x = Lp(\cR_a^{x,-})$ and
$\delta = \cR_a^{x,-} \cap \Ord$.

\begin{lemma} \label{lem:propRax} We have the following properties.
  \begin{enumerate}
  \item As an $\cR^{-}$-premouse, no level of $\cR$ projects across
    $\delta$, \label{eq:RNotProjAcross} 
  \item $\delta_{\cR} = \delta$ is a Woodin cardinal in
    $\cR$, \label{eq:RdeltaWoodin}
  \item
    $\mathcal{P}(\delta)\cap \cR = \mathcal{P}(\delta) \cap \od_{\cR^{-}
      \cup \{\cR^{-}\}},$ \label{eq:ODdeltainR}
  \item
    $\mathcal{P}(a)\cap \cR = \mathcal{P}(a) \cap \od_{a \cup \{a\}},$
    \label{eq:ODinR} and
  \item $\omega_1^{L[T,x]} = \delta$. \label{eq:omega1delta}
  \end{enumerate}
\end{lemma}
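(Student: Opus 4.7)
The plan is to dispose of items (1), (3), (4), and (5) using the definitions together with Lemma \ref{lem:propL[T,a]} and the pseudo-iteration lemmas, leaving (2) for last as the main obstacle.

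Item (1) is immediate from $\cR = Lp(\cR^-)$: every level $N \unlhd \cR$ is sound with $\rho_\omega(N) = \cR^-$, so no level projects strictly below $\cR^- \cap \Ord = \delta$. For (5), I would combine the upper bound $\delta \leq \omega_1^{L[T,x]}$ from Lemma \ref{lem:PseudoComparisonInL[x,T]} (applied with a real $z \leq_T x$ coding all relevant inputs) with the fact that the simultaneous pseudo-genericity iteration makes $x$ itself generic over $\cR^-$ for the extender algebra at $\delta$; Lemma \ref{lem:PseudoGenItInL[x,T]} then forces $\delta = \omega_1^{L[T,x]}$.

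Items (3) and (4) follow from Lemma \ref{lem:propL[T,a]}. For (3), I apply that lemma with $\cR^-$ in the role of $a$. The $\subseteq$ direction uses that $\cR = Lp(\cR^-)$ is lightface definable from $\cR^-$, whence $\mathcal{P}(\delta) \cap \cR \subseteq \od_{\cR^- \cup \{\cR^-\}}$. For the $\supseteq$ direction, the third characterization of Lemma \ref{lem:propL[T,a]} places any $\od_{\cR^- \cup \{\cR^-\}}$-subset of $\delta$ into some $\omega_1$-iterable $\cR^-$-premouse, and a standard hull-and-condense argument produces a sound such premouse projecting to $\cR^-$, which is an initial segment of $\cR$. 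Item (4) is analogous: the $\subseteq$ direction invokes Lemma \ref{lem:propL[T,a]} with $a$, using that every subset of $a$ appearing in $\cR$ belongs to a sound $\omega_1$-iterable $a$-premouse obtained by hulling the relevant level of $\cR$ down to $a$; the $\supseteq$ direction uses $\mc$ (Theorem \ref{thm:M1MC}) together with fullness of $\cR^-$, noting that the captured $a$-premouse lies in $Lp(a)$ and that fullness yields $Lp(a) \unlhd \cR^- \subseteq \cR$.

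The hard part is (2), the Woodinness of $\delta$ in $\cR$. My approach follows the template of \cite[Section 3]{St} (compare \cite[Section 6.5]{StW16} for $L(\bR)$). Suppose, toward contradiction, that some $A \subset \delta$ in $\cR$ witnesses the failure of Woodinness, so no $\kappa < \delta$ is strong up to $\delta$ with respect to $A$ via extenders on the $\cR$-sequence. By (3), $A \in \od_{\cR^- \cup \{\cR^-\}}$ and hence $A \in L[T, \cR^-]$. Since each of the short tree iterable suitable premice we pseudo-compared satisfied ``$\delta_M$ is Woodin in $M$'', the extenders on the $\cR^-$-sequence already witness Woodinness relative to every $B \in \cR^-$; the crux is to extend this witnessing to $A \in \cR \setminus \cR^-$. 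For this, I would locate the least level $\cR|\gamma$ containing a putative counterexample and argue that $\cR|\gamma$ itself defines a $\cQ$-structure identifying the failure of Woodinness at $\delta$; such a $\cQ$-structure would have to project strictly below $\delta$, contradicting (1). Carrying this fine-structural $\cQ$-structure argument over from its original setting of $L(\bR)$ in \cite{St} to $M_1(A)$ is the bulk of the work, and it leans on the structural properties of $M_1(A)$ ($\dc$, the scale property for $\Sigma^2_1$, $\Theta = \theta_0$, and $\mc$) recorded in Section \ref{SectionDC}.
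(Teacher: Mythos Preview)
Your proposal inverts the structure of the argument and, in doing so, misses the key step. In the paper, the real work is not item (2) but a claim you never address: that the simultaneous pseudo-comparison reaches a limit stage at which \emph{all} of the iteration trees are maximal. Recall that, as set up just before the lemma, $\cR^-$ is defined disjunctively---either as a common last model (if the trees are short) or as the common part model $\cM(\cT)$ (if the trees are maximal). The paper shows we are in the second case by assuming all trees are short, obtaining a suitable last model $\cR^*$ that is countable in $L[T,x]$, and then proving a subclaim that $\mathbb{R}\cap L[T,x] \subseteq \cR^*[x]$ via a $\cP$-construction argument using fullness of $\cR^*$. This yields a contradiction, since $\delta^*$ is then countable in $\cR^*[x]$ while remaining a cardinal there by the $\delta^*$-c.c.\ of the extender algebra. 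Once maximality is established, (2) is immediate: maximality of $\cT$ means precisely that there is no $\cQ$-structure in $Lp(\cM(\cT)) = \cR$, so $\delta$ is Woodin in $\cR$. Item (5) is then obtained by re-running the same subclaim argument with $\cR$ in place of $\cR^*$.

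Your proposed direct argument for (2) has a concrete gap: you claim that a $\cQ$-structure $\cR|\gamma$ ``would have to project strictly below $\delta$, contradicting (1)''. This is false. A $\cQ$-structure witnesses that $\delta$ is not definably Woodin, and by Definition~\ref{def:notdefWdn} this can happen via a definable set with no strongness witness \emph{without} any drop in projectum; there is no reason $\rho_\omega(\cR|\gamma) < \delta$, so no contradiction with (1) arises. What the existence of such a $\cQ$-structure inside $Lp(\cR^-)$ would actually contradict is the maximality of the trees---but you have not proved maximality, and that is exactly the missing ingredient. Your treatment of (4) also differs from the paper's (which for $\subseteq$ uses that iteration adds no new subsets of $a$, and for $\supseteq$ simply invokes $\od_{a\cup\{a\}}\subseteq\od_{\cR^-\cup\{\cR^-\}}$ together with (3)), but this is a minor divergence compared to the gap at (2).
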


\begin{proof} We carry out the pseudo-comparisons and
  pseudo-genericity iterations to obtain $\cR^-$ within $L[T,x]$ in
  the sense of Lemmas \ref{lem:PseudoComparisonInL[x,T]} and
  \ref{lem:PseudoGenItInL[x,T]}.

  \begin{claim}\label{cl:ComparisonMaximal}
    The pseudo-comparisons reach a limit stage in which all of the
    iteration trees are maximal.
  \end{claim}
  \begin{proof}
    If the pseudo-comparisons reach a limit stage in which one
    iteration tree $\cT$ is maximal, this already implies that all
    iteration trees are maximal since they agree on their common part
    model and thus a short iteration tree $\cU$ would provide a
    $\cQ$-structure $\cQ(\cU) \unlhd Lp(\cM(\cU)) = Lp(\cM(\cT))$ for
    $\cT$, contradicting the maximality of $\cT$.

    Therefore, we can suppose toward a contradiction that all
    iteration trees occuring in the pseudo-comparisons are short. Then
    the pseudo-comparisons are in fact comparisons and they end
    successfully using the short tree iteration strategies. They give
    rise to a last model $\cR^*$ such that every $z\leq_T x$ is
    generic over $\cR^*$ for Woodin's extender algebra. Moreover, the
    main branches through all iteration trees in the comparisons are
    non-dropping and we have elementary iteration embeddings
    \[j_\cN \colon \cN \to \cR^*\] for each short tree iterable,
    suitable $a$-premouse $\cN$ coded by some real $z\leq_T x$.
    In particular, $\cR^*$ is suitable, witnessed by a Woodin cardinal
    $\delta^* = \delta_{\cR^*}$.

    The proof of the comparison lemma (cf. e.g., the claim in the
    proof of \cite[Theorem 3.11]{St10}) shows that, if a coiteration
    terminates successfully, the comparison process lasts at most
    countably many steps in $L[T,x]$, and so $\cR^*$ is countable in
    $L[T,x]$. By construction, $x$ is generic over $\cR^*$ for
    Woodin's extender algebra at $\delta^*$, so we shall write
    $\cR^*[x]$ for the corresponding generic extension.

    \begin{subclaim}\label{SubclaimR*[x]}
      $\mathbb{R}\cap L[T,x] \subseteq \mathbb{R}\cap \cR^*[x].$
    \end{subclaim}
    \begin{proof}
      Recall that Woodin's extender algebra at $\delta^*$ has the
      $\delta^*$-c.c. and hence
      $((\delta^*)^+)^{\cR^*} = ((\delta^*)^+)^{\cR^*[x]}$. Let
      $\gamma = ((\delta^*)^+)^{\cR^*}$. Consider the countable set
      $\cR^*[x]|\gamma$ and the model $L[T, \cR^*[x]|\gamma]$. Since
      $x \in \cR^*[x]|\gamma$, we have that
      $L[T,x] \subseteq L[T, \cR^*[x]|\gamma]$. Now, Lemma
      \ref{lem:propL[T,a]} implies that every real $y$ in
      $L[T, \cR^*[x]|\gamma]$, belongs to an $\omega_1$-iterable
      $\cR^*[x]|\gamma$-premouse $N^y$. By taking an initial segment
      if necessary, we can assume that $N^y$ is sound and
      $\rho_\omega(N^y) \leq \gamma$. Since
      $L[T, x] \subseteq L[T, \cR^*[x]|\gamma]$, it suffices to show
      that every real $y$ in an $\omega_1$-iterable
      $\cR^*[x]|\gamma$-premouse $N^y$ belongs to $\cR^*[x]$.

      Let $\bar N^y = \cP^{N^y}(\cR^*|\gamma)$ be the
      $\cR^*|\gamma$-premouse obtained as the result of a
      $\cP$-construction above $\cR^*|\gamma$ inside $N^y$ in the
      sense of \cite{SchSt09} or \cite[Section 3]{St08b}. As the size
      of the extender algebra at $\delta^*$ is small, $\bar N^y$ is
      again a premouse and by definability of the forcing
      $\rho_\omega(\bar N^y) \leq \gamma$ (see for example
      \cite[Section 3]{St08b} for a similar argument). Therefore, the
      suitability of $\cR^*$ yields $\bar N^y \unlhd \cR^*$ and hence
      $N^y = \bar N^y[x] \unlhd \cR^*[x]$, where $\bar N^y[x]$ and
      $\cR^*[x]$ are construed as $\cR^*[x]|\gamma$-premice. Thus
      $y \in \cR^*[x]$, as desired.
    \end{proof}

    Since $\cR^*$ is countable in $L[T,x]$, in particular $\delta^*$,
    the Woodin cardinal in $\cR^*$, is countable in $L[T,x]$. Using
    the subclaim, this implies that $\delta^*$ is countable in
    $\cR^*[x]$. But the extender algebra at $\delta^*$ in $\cR^*$ has
    the $\delta^*$-c.c., so $\delta^*$ remains a cardinal in
    $\cR^*[x]$---a contradiction.
  \end{proof}

  Since all iteration trees are maximal, we have $\cR^- = \cM(\cT)$,
  where $\cT$ is an iteration tree of limit length on a suitable
  $a$-premouse coded by some real $z \leq_T x$. Then $\cR = Lp(\cR^-)$
  satisfies \eqref{eq:RNotProjAcross} and \eqref{eq:RdeltaWoodin}.

  For \eqref{eq:ODdeltainR}, note that
  $\mathcal{P}(\delta) \cap \od_{\cR^{-} \cup \{\cR^{-}\}}$ is the set
  of all subsets of $\delta$ which belong to an $\omega_1$-iterable
  $\cR^{-}$-premouse. Since $\cR = Lp(\cR^{-})$ these correspond to
  initial segments of $\cR$.

  This also implies that every subset of $a$ in $\od_{a \cup \{a\}}$
  belongs to $\cR$ since
  $\od_{a \cup \{a\}} \subseteq \od_{\cR^{-} \cup \{\cR^{-}\}}$. For
  the other inclusion in \eqref{eq:ODinR}, suppose that $b$ is a
  subset of $a$ and $b \in \cR$. As no new subsets of $a$ are added
  during the iteration,
  $b \in N$ for some short tree iterable, suitable $a$-premouse $N$
  coded by some real $z \leq_T x$. But then $N | \eta$ is
  $\omega_1$-iterable for every ordinal $\eta$ such that
  $\rho_\omega(N|\eta) = a$ as, in these cases, $\cQ$-structures
  exist. Hence, $b$ belongs to an $\omega_1$-iterable $a$-premouse and
  hence to $\od_{a \cup \{a\}}$.

  Finally, \eqref{eq:omega1delta} follows by the same argument as in
  the proof of the subclaim since the assumption that
  $\delta < \omega_1^{L[T,x]}$ together with \eqref{eq:RNotProjAcross}
  and \eqref{eq:RdeltaWoodin} suffices to derive the contradiction.
\end{proof}

The construction of $\cR_a^x$ depends only on the Turing degree of
$x$, i.e., $x \equiv_T y$ implies $\cR_a^x = \cR_a^y$. Therefore, we will
also write $\cR_a^d$ for $\cR_a^x$, if $d = [x]_T$.

\subsection{Prikry-like forcing a premouse}\label{subsec:Prikrypremouse}
We define a Prikry-like partial order $\bP$ to add a premouse with
infinitely many Woodin cardinals. Let $\mathcal{D}$ denote the set of
all Turing degrees and let $\mu$ denote the Martin measure on
$\mathcal{D}$. Further, let $\cD^m$ be the set of all increasing
sequences of Turing degrees of length $m$ and $\mu_m$ be the measure
on $\cD^m$ induced by the product of $\mu$. More precisely, let
$\mu_0 = \mu$ and assume inductively that $\mu_{k}$ is already defined
on $\cD^k$ for some $k < m$. Then we let for any $X \in \cD^{k+1}$,
$\mu_{k+1}(X) = 1$ if, and only if, for $\mu_0$-a.e. $d_0$ and
$\mu_k$-a.e.  $(d_1, \dots, d_k)$, $(d_0, \dots, d_k) \in X$.

To define the Prikry-like partial order $\bP$, we first define a
sequence of premice along an increasing sequence of Turing
degrees. For $\vec d = (d_0, \dots, d_m) \in \cD^{m+1}$, we let
\[ \cQ^{\vec d}_0(a) = \cR_a^{d_0} \] if $d_0$ is large enough
so that $\cR_a^{d_0}$ is defined, and recursively
\[ \cQ^{\vec d}_{i+1}(a) = \cR_{\cQ^{\vec d}_i(a)}^{d_{i+1}} \] for
$i < m$, if $d_{i+1}$ is large enough so that
$\cR_{\cQ^{\vec d}_i(a)}^{d_{i+1}}$ is defined.

Recall that $U$ is a $\Sigma^2_1$ set which is universal for
$\SIGMA^2_1$ in $V = M_1(A)$. Now, the conditions in $\bP$ are of the
form $(s, \vec X)$, where
\begin{enumerate}
\item $s = (\cS_0, \dots, \cS_n)$ is a sequence of premice such that
  for some $\vec d_s \in \cD^{n+1}$, $\cS_i= \cQ_i^{\vec d_s}(\emptyset)$
  for all $i \leq n$; and
\item $\vec X = (X_k \colon k < \omega) \in L(U,A)$\footnote{The reason for
    requiring $\vec X \in L(U,A)$ will become apparent in the proof of
    Lemma \ref{lem:agreement}.} is a sequence of sets such that for
  all $k < \omega$,
  \begin{enumerate}
  \item $X_k$ is a collection of $(k+1)$-sequences of premice, and
  \item
    $(\cQ^{\vec d}_{0}(\cS_n), \dots, \cQ^{\vec d}_{k}(\cS_n)) \in
    X_{k}$ for $\mu_{k+1}$-a.e. $\vec d \in \mathcal{D}^{k+1}$.
\end{enumerate}   
\end{enumerate}

We call $s$ the stem of the condition $(s, \vec X)$. For two
conditions $(s, \vec X)$ and $(r, \vec Y)$ in $\bP$ with
$s = (\cS_0, \dots, \cS_n)$ and $r = (\cR_0, \dots, \cR_m)$, we let
$(s, \vec X) \leq (r, \vec Y)$ if, and only if, one of the following holds:
\begin{enumerate}
\item $s = r$ and $X_i \subseteq Y_i$ for
all $i<\omega$; or 
\item for some $k < \omega$ and a sequence
  $(\cQ^{\vec d}_{0}(\cR_m), \dots, \cQ^{\vec d}_{k}(\cR_m)) \in Y_k$
  given by some $\vec d \in \cD^{k+1}$,
\begin{enumerate}
\item $s = r^{\frown} (\cQ^{\vec d}_{0}(\cR_m), \dots, \cQ^{\vec d}_{k}(\cR_m))$, and
\item for all $i < \omega$ and sequences $\vec e \in \cD^{i+1}$ such
  that
  $(\cQ^{\vec e}_{0}(\cS_n), \dots, \cQ^{\vec e}_{i}(\cS_n))$ is
  defined and belongs to $X_i$, we have
  \[ \big(\cQ^{\vec d^{\frown} \vec e}_{0}(\cR_m), \dots, \cQ^{\vec
      d^{\frown} \vec e}_{k}(\cR_m), \cQ^{\vec d^{\frown} \vec
      e}_{k+1}(\cR_m), \dots, \cQ^{\vec d^{\frown} \vec
      e}_{k+i+1}(\cR_m)\big) \in Y_{k+i+1}. \]
\end{enumerate}
\end{enumerate}

\comm{ We can actually restrict to the following dense suborder of
  $\bP$. We call a condition $(s, \vec X) \in \bP$ a \emph{measure-one
    tree} if, and only if, $\vec X = (X_k \colon k < \omega)$, where each
  $X_k$ is a set of stems each of which is compatible with
  $s = (\cS_0, \dots, \cS_n)$ and the following hold:
\begin{enumerate}
\item $\vec X$ is closed under initial segments in the following sense:
whenever $(x_0, \dots, x_k) \in X_k$ for some $k < \omega$,
  then $(x_0, \dots, x_i) \in X_i$ for all $i < k$; and
\item for all $k<\omega$ and all $x \in X_k$, 
  \[ \Big\{d\in\mathcal{D}:x^{\frown}(\cR_{\cS_n}^{d}) \in X_{k+1}\Big\} \in \mu.\]
\end{enumerate}}

The next lemma shows that $\bP$ has the Prikry property. As the proof
is analogous to e.g., the proof of Corollary 6.39 in \cite{StW16}, we
omit it.

\begin{lemma}\label{lem:PrikryProperty}
  Let $(s, \vec X) \in \bP$ be a condition and $\Lambda$ a countable
  set of sentences in the forcing language. Then there is some
  $(s, \vec Y) \leq (s, \vec X)$ such that $(s, \vec Y)$ decides
  $\phi$, for all $\phi \in \Lambda$. 
\end{lemma}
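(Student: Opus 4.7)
The plan is to first reduce to a single sentence and then carry out a homogenization using the fact that the products $\mu_k$ of the Martin measure are countably complete ultrafilters, which holds because $M_1(A) \vDash \ad$.

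For the reduction, I would enumerate $\Lambda = \{\phi_n : n < \omega\}$ and, using $\dc$ in $M_1(A)$ from Theorem \ref{thm:M1DC}, recursively build a descending sequence of conditions $(s, \vec X) = (s, \vec X^{-1}) \geq (s, \vec X^0) \geq (s, \vec X^1) \geq \cdots$, each $(s, \vec X^n)$ deciding $\phi_n$ via the single-sentence case. Diagonalizing by setting $Y_k = \bigcap_{i \leq k} X^i_k$ yields a sequence $\vec Y$ that is measure-one at every level by countable completeness of $\mu_{k+1}$, and $(s, \vec Y)$ refines each $(s, \vec X^n)$, hence decides every sentence in $\Lambda$.

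For the single-sentence case, the idea is to classify each potential one-step extension of the stem into three categories — $\phi$\emph{-positive} (some refinement of $\vec X$ produces a condition forcing $\phi$), $\phi$\emph{-negative} (some refinement forces $\neg \phi$), or \emph{undetermined} — and then homogenize. Concretely, for each $k < \omega$ and each $(k+1)$-sequence $(\cT_0, \dots, \cT_k) \in X_k$ arising from some $\vec d \in \cD^{k+1}$ as $(\cQ_0^{\vec d}(\cS_n), \dots, \cQ_k^{\vec d}(\cS_n))$, this classification defines a three-valued function $f_k$ on $X_k$; since $\mu_{k+1}$ is countably complete under $\ad$, there is a $\mu_{k+1}$-measure-one set $X'_k \subseteq X_k$ on which $f_k$ is constant. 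Setting $\vec Y = (X'_k)_{k<\omega}$, the core combinatorial point — formally the same as the classical Prikry or Mathias argument, and as in \cite[Corollary 6.39]{StW16} — is that the constant value of $f_0$ on $X'_0$ cannot be ``undetermined'': if it were, iterating the homogenization at each one-step extension of the stem would produce a refinement of $(s, \vec Y)$ below which no finite extension of the stem decides $\phi$, contradicting the fact that $\phi$ is decided in any generic extension.

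The main obstacle I anticipate is verifying that the homogenization stays inside $L(U, A)$, as required by the definition of $\bP$. This demands that the three-valued classification functions $f_k$ be definable in $L(U,A)$, which in turn requires that the forcing relation $\Vdash_{\bP}$ restricted to conditions in $L(U,A)$ be definable there. Both facts should follow from the definability of $U$ and of the Martin measure together with the restriction to $\vec X \in L(U,A)$ built into the definition of $\bP$; the key is to check that selecting the $\mu_{k+1}$-measure-one piece of $X_k$ on which $f_k$ is constant can be carried out uniformly and internally in $L(U,A)$, so that $\vec Y$ itself lies in $L(U,A)$. This is precisely the role played by the clause $\vec X \in L(U,A)$ in the definition of the forcing.
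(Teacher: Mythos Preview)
The paper omits the proof entirely, referring to \cite[Corollary 6.39]{StW16}, and your proposal is precisely the standard Prikry-type argument that reference points to: reduce to a single sentence by diagonalization (using $\dc$ and countable completeness of the $\mu_k$), then homogenize over stem extensions and rule out the ``undetermined'' case by fusion. Your identification of the $L(U,A)$-definability issue is apt and is indeed handled by the restriction $\vec X \in L(U,A)$ built into $\bP$; one minor point is that your three-way classification can overlap (a stem extension may be both $\phi$-positive and $\phi$-negative), but this is harmless once you fix a convention, and the argument goes through as you describe.
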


Now fix a $G$ which is $\bP$-generic over $M_1(A)$ and let
$\vec \cQ = (\cQ_n \colon n < \omega)$ be the union of the stems of
conditions in $G$. Write $\delta_n$ for the largest cardinal in
$\cQ_n$. By definition, all $\cQ_n$ are such that
$\cQ_n = Lp(\cQ_n|\delta_n)$ and $Lp^{\omega}(\cQ_n|\delta_n)$ is a
suitable premouse, so $\delta_n$ is a Woodin cardinal in $\cQ_n$. Let
\[ \cQ_\infty = \bigcup_{n<\omega} \cQ_n. \]

\begin{lemma}\label{lem:propQn}
  The following hold:
  \begin{enumerate} 
  \item for all $n<\omega$, $\Pot(\delta_n) \cap L[\vec \cQ] \subseteq
    \cQ_n$, \label{eq:propQn1}
  \item for all $n < \omega$, $\delta_n$ is a Woodin cardinal in
    $L[\vec \cQ]$, \label{eq:propQn2}
  \item $\cQ_n = \cQ_\infty | (\delta_n^+)^{\cQ_\infty}$; hence,
    $L[\cQ_\infty] = L[\vec \cQ]$. \label{eq:propQn3}
  \end{enumerate}
\end{lemma}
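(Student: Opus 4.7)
The three clauses are interlocked, with (1) doing the main work and (2)--(3) following as consequences. The plan is to first extract (1) via a Prikry-style homogeneity argument combined with Mouse Capturing, then derive (2) from a standard extender-lifting argument, and finally (3) by comparing the cardinal structures of $\cQ_\infty$ and the individual $\cQ_n$'s.

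For (1), I would fix $A \in \Pot(\delta_n) \cap L[\vec{\cQ}]$ and a definition $\alpha \in A \iff L[\vec{\cQ}] \vDash \phi(\alpha, \eta, \vec{\cQ})$. By genericity, fix a condition $(s, \vec X) \in G$ whose stem $s = (\cQ_0, \dots, \cQ_n)$ already contains $\cQ_n$. The plan is to combine the Prikry property (Lemma \ref{lem:PrikryProperty}) with a homogeneity argument on the tail forcing---permuting the measure-one components of $\vec X$ via measure-preserving rearrangements of Turing-degree sequences---to show that, for each $\alpha < \delta_n$, the forcing value of $\phi(\check\alpha, \check\eta, \tau_{\vec{\cQ}})$ above $(s, \vec X)$ depends only on $s$ and not on the choice of tail. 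This exhibits $A$ as a subset of $\cQ_n|\delta_n$ that is $\od_{\cQ_n|\delta_n \cup \{\cQ_n|\delta_n\}}$ in $V = M_1(A)$. Since $\cQ_n = Lp(\cQ_n|\delta_n)$, Mouse Capturing in $V$ (Theorem \ref{thm:M1MC}) then places $A$ inside an $\omega_1$-iterable $\cQ_n|\delta_n$-premouse, which must be an initial segment of $\cQ_n$, yielding $A \in \cQ_n$.

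Parts (2) and (3) would then follow without further use of the forcing. For (2), $\delta_n$ is Woodin in $\cQ_n$ because $Lp^\omega(\cQ_n|\delta_n)$ is suitable and agrees with $\cQ_n$ on $\Pot(\delta_n)$. Given $B \in \Pot(\delta_n) \cap L[\vec{\cQ}]$, part (1) gives $B \in \cQ_n$, so there is an extender $E$ on the $\cQ_n$-sequence witnessing $B$-reflection up to some $\lambda < \delta_n$; applying $E$ to $L[\vec{\cQ}]$, the required agreement $(V_\lambda)^{\ult(L[\vec{\cQ}], E)} = (V_\lambda)^{L[\vec{\cQ}]}$ follows from (1) applied at $\xi \le \lambda$. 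For (3), the recursive construction $\cQ_{m+1} = \cR_{\cQ_m}^{d_{m+1}}$ makes the chain $(\cQ_m)_{m<\omega}$ increasing, so $\cQ_n$ is a natural initial segment of $\cQ_\infty$ of height $\gamma_n := \cQ_n \cap \Ord$. Every $\xi \in (\delta_n, \gamma_n)$ already has a surjection from $\delta_n$ inside $\cQ_n = Lp(\cQ_n|\delta_n)$, giving $\gamma_n \le (\delta_n^+)^{\cQ_\infty}$, and conversely (1) together with the inaccessibility of $\delta_n$ from (2) forces $(\delta_n^+)^{\cQ_\infty} \le \gamma_n$. The identity $L[\cQ_\infty] = L[\vec{\cQ}]$ is then immediate, since $\vec{\cQ}$ is definable from $\cQ_\infty$ via $\cQ_n = \cQ_\infty | (\delta_n^+)^{\cQ_\infty}$.

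The hard part will be the homogeneity argument in (1): showing that the forcing value of $\phi(\check\alpha, \check\eta, \tau_{\vec{\cQ}})$ above a condition with stem $s$ is determined by $s$ alone. This is the characteristic Prikry-forcing-style symmetry (in the spirit of \cite{St} and \cite{StW16}), and it requires permuting the Martin-measure-one tail sets $\vec X \in L(U,A)$ in a way that commutes with the recursive assignment $\vec d \mapsto \cQ^{\vec d}_i$; this is precisely where the restriction $\vec X \in L(U,A)$ built into the definition of $\bP$ becomes essential, since it places the relevant parameters in a model where the Martin measure and the universal $\Sigma^2_1$ set behave coherently.
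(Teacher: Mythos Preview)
Your overall architecture is right---Prikry property, then ordinal definability from a stem, then mouse capturing---and parts (2) and (3) are fine. But there is a genuine gap in your plan for (1), and a misattribution.

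\medskip

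\textbf{The gap.} You want to fix a condition in $G$ with stem $s=(\cQ_0,\dots,\cQ_n)$ and argue, via ``homogeneity,'' that the forcing value of $\phi(\check\alpha,\check\eta,\tau_{\vec\cQ})$ depends only on $s$. But the term $\dot a$ depends on the entire generic sequence $\vec\cQ$, not just its first $n+1$ entries, so no permutation of tails will make $\dot a^{G}$ equal to $\dot a^{G'}$ for two generics with the same length-$(n{+}1)$ stem. What actually works is much simpler than permutations: any two conditions with the \emph{same} stem are compatible, since one can intersect their measure-one tails. Combined with the Prikry property, this shows that a deciding condition \emph{in $G$} determines $a$ from its stem alone. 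The catch is that the deciding condition lying in $G$ may have a stem $(\cQ_0,\dots,\cQ_k)$ with $k>n$: density only guarantees that $G$ meets the set of conditions deciding all $\delta_n$-many sentences, and the stem length of that condition is not under your control. So the argument gives $a\in\od_{\cQ_k\cup\{\cQ_k\}}$, hence $a\in\cQ_k$, for some possibly large $k$. To get down to $\cQ_n$ you need the preliminary fact
\[
\Pot(\delta_n)\cap\cQ_{m+1}\subseteq\cQ_m\quad\text{for all }m,
\]
which the paper proves first, directly from Lemma~\ref{lem:propRax}\eqref{eq:ODdeltainR},\eqref{eq:ODinR}. This chaining-down step is missing from your plan and is not replaced by any homogeneity argument.

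\medskip

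\textbf{The misattribution.} The restriction $\vec X\in L(U,A)$ plays no role in this lemma; compatibility of same-stem conditions only uses that the componentwise intersection $\vec X\cap\vec Y$ is again a legitimate tail, which is immediate. The paper inserts that restriction for a completely different reason (Lemma~\ref{lem:agreement}), so that $\bP$ and $G$ can be recovered inside $L(U,A)[\cQ_\infty][h]$.
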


\begin{proof}
  We show \eqref{eq:propQn1}, from which \eqref{eq:propQn2} and
  \eqref{eq:propQn3} follow.  Let us first show that
  $\Pot(\delta_n) \cap \cQ_{n+1} \subseteq \cQ_n$; a consequence of
  this is that $\Pot(\delta_n) \cap \cQ_m \subseteq \cQ_n$ whenever
  $n<m$.  To see this, suppose there is a subset $a$ of $\delta_n$
  which is in $\cQ_{n+1}$. By Lemma \ref{lem:propRax}\eqref{eq:ODinR},
  $\Pot(\delta_n) \cap \cQ_{n+1} = \Pot(\delta_n) \cap \od_{\cQ_n \cup
    \{\cQ_n\}}$.
  Lemma \ref{lem:propRax}\eqref{eq:ODdeltainR} implies that
  $\Pot(\delta_n) \cap \cQ_{n} = \Pot(\delta_n) \cap \od_{\cQ_n^- \cup
    \{\cQ_n^-\}}$, but this is equal to
  $\Pot(\delta_n) \cap \od_{\cQ_n \cup \{\cQ_n\}} $ by definability of
  $\cQ_n = Lp(\cQ_n^-)$. Hence, we have $a \in \cQ_n$, as desired.


  To prove \eqref{eq:propQn1}, let
  $a \in \Pot(\delta_n) \cap L[\vec \cQ]$. Let $\dot a$ be a term
  defining $a$ from $\vec \cQ$ and an ordinal parameter in
  $M_1(A)[G]$. The Prikry property (Lemma \ref{lem:PrikryProperty})
  yields a $k<\omega$ and a condition $(s, \vec X)$ with $s$ of the
  form $(s_0, \dots, s_k)$, with $n<k$, which decides all statements
  of the form ``$\xi \in \dot a$''. By genericity we can choose
  $(s, \vec X) \in G$, so that, in particular, $s_i = \cQ_i$ for all
  $i \leq k$.
  \begin{claim} We have $\xi \in a$ if, and only if,
    \[\exists t \exists \vec Y (t \text{ is of
      the form } (t_0, \dots, t_k) \, \wedge \, t_i = \cQ_i \text{ for all }
    i \leq k \, \wedge \, (t, \vec
    Y) \, \Vdash \, \xi \in \dot a). \] 
  \end{claim}
  \begin{proof}
    If $\xi \in a$, then the condition $(s, \vec X)$ is a witness for
    the displayed equation. Conversely, suppose there is some
    condition $(t, \vec Y)$ as in the displayed equation, but
    $\xi \notin a$. Then we must have
    $(s, \vec X) \, \Vdash \, \xi \notin \dot a$. Note that $t =
    s$. Define $\vec Z$ by $Z_i = X_i \cap Y_i$ for all $i <
    \omega$. Then $(s, \vec Z) \leq (s, \vec X)$ and
    $(s, \vec Z) = (t, \vec Z) \leq (t, \vec Y)$. Now, let $H$ be
    $\bP$-generic over $M_1(A)$ such that $H$ contains $(s, \vec
    Z)$. Then, in $M_1(A)[H]$, both $\xi \in a$ and $\xi \notin a$
    hold, a contradiction.
  \end{proof}
  The claim yields that $a \in \od_{\cQ_k \cup \{\cQ_k\}}$ and thus
  $a \in \od_{\cQ_k^- \cup \{\cQ_k^-\}} \cap \Pot(\delta_k) = \cQ_k
  \cap \Pot(\delta_k)$, by Lemma
  \ref{lem:propRax}\eqref{eq:ODdeltainR}. So in particular
  $a \in \cQ_k \cap \Pot(\delta_n)$. By the remark at the beginning of
  the proof, this implies $a \in \cQ_n$, as desired.
\end{proof}

Write $\lambda = \sup_{n<\omega} \delta_n$ and fix any premouse
$\cP\models\zfc$ extending $\cQ_\infty$ in which $\lambda$ remains a
cardinal. We form a derived model of $\cP$. More precisely, working in
$M_1(A)[G]$, let $\bS$ be the partial order consisting of sequences
$(h_0, \dots, h_k)$ such that for all $0 \leq n \leq k$,
$h_n \in M_1(A)$ is $\col(\omega, \delta_n)$-generic over
$\cQ_n$. The order on $\bS$ is sequence extension. Fix $\hat h$
$\bS$-generic over $M_1(A)[G]$, let $(h_n \colon n < \omega)$ be the
induced sequence, and let $h$ be given by $h(n,m) = h_n(m)$. We may
abuse notation and identify $\hat h$ with $h$ and with the
corresponding $\col(\omega, {<}\lambda)$-generic filter over
$M_1(A)[G]$, since $(\delta_n \colon n < \omega)$ is definable from
$\cQ_\infty$ by the previous lemma.

Using this $h$, we can build the derived model of $\cP$: Write
\[\bR^*_h = \bigcup_{n\in\omega}
  \bR \cap \cP[h\upharpoonright\delta_n],\] and
\begin{align*}
  Hom^*_h = \{ p[S] &\cap \bR^*_h | \exists n < \omega (\cP[h
  \upharpoonright \delta_n] \vDash S \text{ is a } \\ & {<}\lambda
  \text{-absolutely complemented tree})\}. 
\end{align*}

Note that $\bR^*_h$, and $\Hom^*_h$ only depend on $\cQ_\infty$ and
$h$, not on the full premouse $\cP$, so this notation makes sense. For
this reason, we sometimes do not distinguish between $\cP$ and
$\cQ_\infty$ in what follows.

\subsection{Preparation for adding extenders on
  top}\label{subsec:prepaddext}

Our next goal is to \emph{add the extenders of $M_1(A)$} on top of
$\cQ_\infty$ while preserving the Woodin cardinals of $\cQ_\infty$ to
obtain a model with $\omega +1$ Woodin cardinals.  This will be done
via a $\cP$-construction. For this, some preparation is needed--- we
need to show e.g.,
\[L_\xi[\cQ_\infty][h] = (M_1(A)|\xi)[G][h]\] for some ordinal $\xi$
below the Woodin cardinal of $M_1(A)$.

We first need the following lemmas which again essentially can be
found also in \cite{St}. Recall that we have $V = M_1(A)$ and write
$\bR^V = M_1(A) \cap \bR = A$.

\begin{lemma}\label{LemmaRstarisRV}
$\mathbb{R}^*_h = \mathbb{R}^V$.
\end{lemma}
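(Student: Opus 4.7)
The plan is to verify $\mathbb{R}^*_h=\mathbb{R}^V$ by proving the two inclusions separately: $\subseteq$ by a nice-name argument and $\supseteq$ by a density argument inside $\bP$.

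First I would establish $\mathbb{R}^*_h\subseteq\mathbb{R}^V$. Given $x\in\cP[h\!\upharpoonright\!\delta_n]\cap\mathbb{R}$, observe that the fragment $h\!\upharpoonright\!\delta_n$ corresponds to the tuple $(h_0,\dots,h_{n-1})$, and each $h_k$ lies in $V=M_1(A)$ by the very definition of $\bS$. The forcing producing this fragment is a product of the $\col(\omega,\delta_k)$ for $k<n$, of size strictly less than $\delta_n$ in $\cP$, so any $\cP$-name for $x$ can be taken in $\cP|\gamma$ for some $\gamma<\delta_n$. By Lemma~\ref{lem:propQn}\eqref{eq:propQn3}, $\cP|\gamma=\cQ_\infty|\gamma=\cQ_{n-1}|\gamma$, which is in $V$ since each $\cQ_k$ is produced by the $\cR_a^d$-construction carried out inside $V$ (invoking Lemma~\ref{lem:absolSuitableLxT} for the needed absoluteness of suitability). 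Hence $x$ is the interpretation of a $V$-name by a $V$-filter, so $x\in V$.

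For $\mathbb{R}^V\subseteq\mathbb{R}^*_h$ it suffices to show that every $r\in A$ lies in some $\cQ_n$, for then $r\in\cQ_n\subseteq\cP\subseteq\cP[h\!\upharpoonright\!\delta_{n+1}]$ and so $r\in\mathbb{R}^*_h$. By Theorem~\ref{thm:M1ScalesTheta}, $V\vDash\Theta=\theta_0$, so every real of $V$ is ordinal-definable in $V$; fix an ordinal $\alpha$ such that $r$ is the unique real satisfying some $\in$-formula with parameter $\alpha$. The plan is then to prove that
\[ D_\alpha=\{(s,\vec X)\in\bP:\alpha\in\cS_{|s|-1}\cap\Ord\} \]
is dense in $\bP$. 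Granting this, genericity of $G$ yields $\alpha\in\cQ_n\cap\Ord$ for some $n$; since $r$ is then ordinal-definable from an element of $\cQ_n$, we have $r\in\od_{\cQ_n\cup\{\cQ_n\}}\cap\mathcal{P}(\omega)$, which by Lemma~\ref{lem:propRax}\eqref{eq:ODinR} applied with $a=\cQ_n$ equals $\mathcal{P}(\omega)\cap\cQ_{n+1}$.

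To prove the density of $D_\alpha$, fix $(s',\vec Y)\in\bP$ with $s'=(\cS_0,\dots,\cS_n)$. By Lemma~\ref{lem:propRax}\eqref{eq:omega1delta}, $\delta_{\cR_{\cS_n}^d}=\omega_1^{L[T,d]}$, which exceeds $\alpha$ for all Turing degrees $d$ in a cone. Since every cone has Martin measure one, the set of $d$ with $\alpha\in\cR_{\cS_n}^d$ has $\mu$-measure one, and so one can append $\cR_{\cS_n}^d$ to the stem for some such $d$ while shrinking the measure-one sets in $\vec Y$ to witness the order condition. The main obstacle is the bookkeeping needed to ensure that the extended sequence $\vec X$ lies in $L(U,A)$ and satisfies the product-measure compatibility with $\vec Y$ dictated by the order on $\bP$; this is handled by defining each $X_i$ as the natural ``tail'' of $Y_{i+1}$ indexed by the appended piece of stem and then intersecting with the cone, which keeps $\vec X$ in $L(U,A)$ since $\vec Y$ was.
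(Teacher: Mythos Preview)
Your $\subseteq$ direction is fine and matches the paper's (terse) observation that each $(\cQ_n,h_n)\in M_1(A)$.

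The $\supseteq$ direction, however, has a real gap. The inference ``$\Theta=\theta_0$, so every real of $V$ is ordinal-definable in $V$'' is incorrect: $\Theta=\theta_0$ says only that the Solovay sequence is trivial (equivalently, every \emph{set of reals} is $\od_{\{r\}}$ for some real $r$), not that every real is $\od$. In $L(\bR)$ under $\ad$ one has $\Theta=\theta_0$, yet only countably many reals are $\od$. More to the point, your target statement ``every $r\in A$ lies in some $\cQ_n$'' is false in general: each $\cQ_n$ is a premouse over $\emptyset$ (or over an earlier $\cQ_m$), so $\bR\cap\cQ_\infty$ is contained in the reals appearing in some $\omega_1$-iterable premouse, i.e., in the $\od$ reals by $\mc$. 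A generic $r\in A$ is simply not there. (Even granting $\od$-ness, your density argument needs the defining ordinal $\alpha$ to be countable in $V$, which you have not secured.)

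The paper's argument avoids this by not trying to put $x$ into $\cQ_\infty$ at all. Instead it exploits the pseudo-genericity iterations built into the construction of $\cR_a^d$: for any condition $(s,\vec X)$ with stem ending in $\cS_k$, one shrinks each $X_n$ to those sequences arising from $\vec d$ with $x\leq_T d_0$, which is still $\mu_{n+1}$-measure one. Below such a condition, every later $\cQ_i$ is built so that $x$ is generic over it for Woodin's extender algebra at $\delta_i$; since that forcing is absorbed by $\col(\omega,\delta_i)$, one gets $x\in\cQ_i[h_i]\subseteq\bR^*_h$. The key idea you are missing is that reals of $V$ enter $\bR^*_h$ through the generic $h$, not through the stems.
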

\begin{proof}
  $\mathbb{R}^*_h \subseteq \mathbb{R}^V$ is easy to see as each pair
  $(\cQ_n, h_n)$ is in $M_1(A)$, so let $x \in \bR^V$ for the other
  inclusion. Let $(s, \vec X) \in \bP$ be an arbitrary condition in
  the Prikry-like forcing defined above, say
  $s = (\cS_0, \dots, \cS_k)$. For each $n<\omega$ consider
  \[ Y_n = \{ t \in X_n \mid \exists \vec d \in \cD^{n+1} (x \leq_T
    d_0 \wedge t = (\cQ_0^{\vec d}(\cS_k), \dots, \cQ_n^{\vec
      d}(\cS_k)))\} \] and note that $(s, \vec Y) \leq (s, \vec
  X)$. Moreover, since pseudo-genericity iterations are included in
  the construction of the premice $\cR_a^d$, it follows by density
  that for all $i >k$, $x$ is generic over $\cQ_i$ for Woodin's
  extender algebra at $\delta_i$. Hence $x \in \bR^*_h$.
\end{proof}

\begin{lemma}\label{LemmaHomstarIsDelta21}
  $Hom^*_h = \DELTA^2_1$. 
\end{lemma}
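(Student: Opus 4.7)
The plan is to establish $\Hom^*_h = \DELTA^2_1$ by proving both inclusions, with the identification $\bR^*_h = \bR^V$ from Lemma \ref{LemmaRstarisRV} allowing free passage between $\Hom^*_h$-representations and genuine Suslin representations in $V$.

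For the inclusion $\Hom^*_h \subseteq \DELTA^2_1$, take $B = p[S] \cap \bR^*_h$ with $S \in \cP[h \upharpoonright \delta_n]$ a $<\lambda$-absolutely complemented tree, and let $\tilde S$ be the tree complementing $S$ in $\cP[h \upharpoonright \delta_n]$. By Lemma \ref{LemmaRstarisRV}, the equalities $B = p[S]$ and $\bR^V \setminus B = p[\tilde S]$ hold in $V$, so $B$ is Suslin and co-Suslin in $V$. By Theorem \ref{thm:M1ScalesTheta}, $V \vDash \ad +$ ``$\SIGMA^2_1$ has the scale property'' $+\; \Theta = \theta_0$, and under these hypotheses the pointclass of Suslin-co-Suslin sets coincides with $\DELTA^2_1$: every Suslin set is $\SIGMA^2_1$ via Moschovakis, and the hypothesis $\Theta = \theta_0$ delivers the converse, so Suslin-co-Suslin equals $\SIGMA^2_1 \cap \PI^2_1 = \DELTA^2_1$.

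For the reverse inclusion $\DELTA^2_1 \subseteq \Hom^*_h$, given $B \in \DELTA^2_1$, the scale property for $\SIGMA^2_1$ in $V$ produces trees $T_B$ and $T_{\neg B}$ with $B = p[T_B]$ and $\bR \setminus B = p[T_{\neg B}]$. Under $\Theta = \theta_0$ together with Mouse Capturing (Theorem \ref{thm:M1MC}), these trees are $\od$-in-$A$ and are located inside $\omega_1$-iterable mice; by Lemma \ref{lem:propL[T,a]}, they can in fact be taken to be absorbed by the universal Martin--Solovay tree $T$ used in the construction of the $\cR^x_a$. The plan is then to argue that such trees are captured inside $\cP[h \upharpoonright \delta_n]$ for sufficiently large $n$, exploiting the fact that the pseudo-comparison and pseudo-genericity iterations producing $\cQ_n$ were carried out precisely against $T$, so that $\cQ_n$ and hence $\cP[h \upharpoonright \delta_n]$ correctly compute the relevant Suslin data up to $\delta_n$. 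The required $<\lambda$-absolute complementation is provided by the remaining Woodin cardinals $\delta_m$ for $m > n$ in $\cP$: applying Woodin's absoluteness/universally-Baire theorem at $\delta_{n+1}$ gives that $T_B$ and $T_{\neg B}$, viewed inside $\cP[h \upharpoonright \delta_n]$, remain complementary after every forcing of size less than $\delta_{n+1}$, and iterating this up the sequence $(\delta_m)_{m > n}$ yields $<\lambda$-absolute complementation.

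The main obstacle is the reverse inclusion, specifically the step of locating $T_B$ and $T_{\neg B}$ inside a fixed $\cP[h \upharpoonright \delta_n]$ with absolute complementation. The argument must thread together Mouse Capturing, the fact that the pseudo-genericity iterations in the construction of $\cQ_\infty$ were guided by a universal $\SIGMA^2_1$-tree (so that $\cQ_\infty$ absorbs $\SIGMA^2_1$-data from $V$), and Woodin-cardinal absoluteness; in particular, it relies on the derived-model-style observation that at a limit of Woodin cardinals of $\cP$ the $\Hom^*$ sets are precisely the $<\lambda$-universally Baire sets of $\cP$, which once identified with the Suslin-co-Suslin sets of $V$ complete the match with $\DELTA^2_1$.
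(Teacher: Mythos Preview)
Your argument for $Hom^*_h \subseteq \DELTA^2_1$ has a genuine gap. The trees $S,\tilde S$ live in $\cP[h\upharpoonright\delta_n] \subseteq V[G][h]$, not in $V = M_1(A)$ (recall that $\cQ_\infty$ is built from the Prikry generic $G$, and $h$ is a further generic over $V[G]$). Hence one cannot conclude that $B$ is Suslin, or even that $B$ is an element, \emph{of $V$}; the step ``$B = p[S]$ holds in $V$, so $B$ is Suslin and co-Suslin in $V$'' is simply unjustified. The paper avoids this obstacle by not attempting a direct proof of this inclusion. After establishing $\DELTA^2_1 \subseteq Hom^*_h$, it argues by contradiction via a Wadge argument: if the inclusion were proper, then closure of $Hom^*_h$ under continuous preimages and complements, together with the Wadge-minimality of the universal $\SIGMA^2_1$ set $U$ above $\DELTA^2_1$, forces $U \in Hom^*_h$. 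From the complementing tree for $U$ one then reads off, via the section corresponding to the $\Pi^2_1$ relation ``$x\notin\od_{\{y\}}$'', a real $x \in \cQ_i[h_i]$ that is not $\od$ from $(\cQ_i|\delta_i,h_i)$; this contradicts $\cQ_i = Lp(\cQ_i|\delta_i)$, since every real of $\cQ_i[h_i]$ is definable from $\cQ_i|\delta_i$, $h_i$, and ordinals.

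For $\DELTA^2_1 \subseteq Hom^*_h$ your outline points in the right direction but is vague on the key step of locating the trees inside $\cQ_\infty[h_i]$. The paper does not invoke Mouse Capturing or abstract universally-Baire absoluteness at the Woodins of $\cP$. Instead it works in $L[T,\cQ_j,h_i]$ (where $T$ is the fixed universal $\Sigma^2_1$ tree), observes that the scale trees $S_0,S_1$ for $B$ and $\neg B$ lie there and are ${<}\delta_j$-absolutely complementing there, collapses them via an elementary substructure to trees $S_0^j,S_1^j$ of size $\delta_j$, and then applies Lemma~\ref{lem:propL[T,a]} together with Lemma~\ref{lem:propRax}\eqref{eq:ODinR} to conclude that the collapsed trees, being $\od$ from $\cQ_j[h_i]$, lie in $\cQ_{j+1}[h_i] \subseteq \cQ_\infty[h_i]$. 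This last transfer is the heart of the matter and is precisely where the specific construction of the $\cQ_n$'s enters.
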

\begin{proof}
  We first show $\DELTA^2_1\subseteq Hom^*_h$. Let $B \in \DELTA^2_1$,
  so that $B$ is, since $\SIGMA^2_1$ has the scale property, Suslin
  and co-Suslin. More precisely, choose $z\in\mathbb{R}$ such that
  $B \in \Delta^2_1(z)$ and choose $i \in\omega$ large enough so that
  $z \in \cQ_i[h_i]$ (this exists by the previous lemma). From $T$ and
  $z$, one can construct trees $S_0$ and $S_1$ on
  $\omega\times\kappa$, for some ordinal
  $\kappa<\boldsymbol\delta^2_1$, such that $S_0$ and $S_1$ project to
  $B$ and $\mathbb{R}\setminus B$, respectively.  We have
  \[S_0, S_1 \in L[T, \cQ_j, h_i]\] for every $j\geq i$. Moreover, by
  the absoluteness of well-foundedness, if
  $x \in L[T, \cQ_j, h_i]\cap\mathbb{R}$, then
  \[L[T, \cQ_j, h_i]\models x \in p[S_0]\cup p[S_1].\]

  Moreover, if $\mathbb{Q}$ is a small partial order in
  $L[T, \cQ_j, h_i]$ (say, of size strictly less than $\delta_j$) and
  $g$ is $\mathbb{Q}$-generic over $L[T, \cQ_j, h_i]$, then
  \[L[T, \cQ_j, h_i][g]\models \mathbb{R} = p[S_0]\cup p[S_1],\] for
  (since $\delta_j$ is countable in $V$) otherwise there is a
  $\mathbb{Q}$-generic $g$ over $L[T, \cQ_j, h_i]$ with $g\in V$ such
  that
  \[L[T, \cQ_j, h_i][g]\models \exists x\in\mathbb{R}\, \big(x \not\in
    p[S_0]\cup p[S_1]\big).\] However, such an $x$ does belong to one
  of $p[S_0]$ or $p[S_1]$ in $V$ and thus it must do too in
  $L[T, \cQ_j, h_i][g]$ (by the absoluteness of well-foundedness).

  We have shown that in $L[T, \cQ_j, h_i]$ there are
  ${<}\delta_j$-absolutely complementing trees $S_0$ and $S_1$ that
  project to $B \cap L[T, \cQ_j, h_i]$ and
  $(\mathbb{R}\setminus B) \cap L[T, \cQ_j, h_i]$, respectively.

  The trees $S_0$ and $S_1$ might be big (in principle $\kappa$ might
  be of arbitrarily large size below $\boldsymbol\delta^2_1$). Let $M$
  be an elementary substructure of some large $L_\alpha[T,\cQ_j,h_i]$
  such that
  \begin{enumerate}
  \item $M \in L[T,\cQ_j,h_i]$,
  \item $L[T,\cQ_j,h_i] \models |M| = \delta_j$,
  \item $L_{\delta_j}[T,\cQ_j,h_i] \subseteq M$, and
  \item $S_0, S_1 \in M$.
  \end{enumerate}
  Let $S_0^j$ and $S_1^j$ be the images of $S_0$ and $S_1$ under the
  collapse embedding for $M$. Then, $S_0^j$ and $S_1^j$ are also
  ${<}\delta_j$-absolutely complementing and $S_0^j$ projects to
  $B \cap L[T, \cQ_j, h_i][g]$ whenever $g$ is $\mathbb{Q}$-generic
  over $L[T, \cQ_j, h_i]$ for a partial order $\mathbb{Q}$ in
  $L[T,\cQ_j,h_i]$ of size strictly less than $\delta_j$.

  \begin{claim}
    For all $j >i$, $S_0^j, S_1^j \in \cQ_\infty[h_i]$.
  \end{claim}
  \begin{proof}
    Note that $h_i$ is $\col(\omega, \delta_i)$-generic over
    $\cQ_{j+1}$.
    Recall that by definition, $\cQ_{j+1} = \cR^{d}_{\cQ_j}$ for some
    Turing degree $d$. Therefore we have, for such a $d$, by an
    argument similar to that in the proof of Subclaim
    \ref{SubclaimR*[x]} that
    \[ \cQ_{j+1}[h_i] = \cR^{d}_{\cQ_j[h_i]}. \]

    Now, by Lemma \ref{lem:propL[T,a]}, every subset of $\cQ_j$ in
    $L[T,\cQ_j,h_i]$ is definable from $\cQ_j$, $h_i$, and ordinal
    parameters in $V$. $S_0^j$ and $S_1^j$ are essentially subsets of
    $\delta_j$ in $L[T,\cQ_j,h_i]$, so they are definable in $V$ from
    $\cQ_j$, $h_i$, and ordinal parameters. By Lemma
    \ref{lem:propRax}(\ref{eq:ODinR}),
    \[ \mathcal{P}(\cQ_j[h_i])\cap \cR^d_{\cQ_j[h_i]} =
      \mathcal{P}(\cQ_j[h_i])\cap \od_{\cQ_j[h_i] \cup
        \{\cQ_j[h_i]\}}. \] This implies that $S_0^j$ and $S_1^j$
    belong to $Q_{j+1}[h_i]$, which proves the claim.
  \end{proof}

  It follows that $S_0^j$ and $S_1^j$ are ${<}\delta_j$-absolutely
  complementing trees in $\cQ_\infty[h_i]$ since by an argument as in
  the proof of Lemma \ref{lem:propQn}, for every $g$ which is generic
  for a partial order of size strictly less than $\delta_j$,
  $\cQ_\infty[g] \cap \Pot(\delta_j) \subseteq \cQ_j[g]$. In
  particular, $B \cap \cQ_\infty[h_i]$ is ${<}\lambda$-universally
  Baire in $\cQ_\infty[h_i]$. The canonical extension $B^*$ of
  $B \cap \cQ_\infty[h_i]$ to a set in $Hom^*_h$ is in fact unique and
  consistent with the trees $(S_0^j,S_1^j)$ for all
  $i<j<\omega$. Therefore $B = B^*$. This proves
  $\DELTA^2_1\subseteq Hom^*_h$.

  We now assume towards a contradiction that $\DELTA^2_1\neq Hom^*_h$,
  i.e., that the $\DELTA^2_1$ sets form a proper Wadge initial segment
  of $Hom^*_h$. Since $Hom^*_h$ is closed under continuous
  reducibility and the $\Sigma^2_1$ set $U$ which is universal for
  $\SIGMA^2_1$ is minimal in the Wadge hierarchy above the pointclass
  $\DELTA^2_1$, it follows that $U \in Hom^*_h$.

  By definition of $Hom^*_h$, there is some $i\in\omega$ and a
  ${<}\lambda$-absolutely complemented tree $S$ in $\cQ_\infty[h_i]$
  such that, since $\bR^V = \bR^*_h$ by Lemma \ref{LemmaRstarisRV},
  $p[S] \cap \bR^V = \bR^V \setminus U$.

  By Lemma \ref{lem:propL[T,a]}, the relation
  \[x \not\in\od_{\{y\}}\] is $\Pi^2_1$, so it appears in a section of
  the complement of $U$. Using $S$, we can get a real $x$ such that
  $x \notin \od_{\{(\cQ_i|\delta_i, h_i)\}}$ but
  $x \in \cQ_\infty[h_i]$. Then in fact $x \in \cQ_i[h_i]$ by the
  argument in the proof of Lemma \ref{lem:propQn}. However,
  $\cQ_i = Lp(\cQ_i|\delta_i),$ so every real in $\cQ_i[h_i]$ is
  definable from $\cQ_i|\delta_i$, $h_i$, and ordinal parameters,
  which is a contradiction. This completes the proof of the lemma.
\end{proof}

Recall that $\bR^V = A$. Define $\xi_0$ to be the least
$\xi > \Theta^{L(U,A)}$ such that $L_{\xi}(U, A) \vDash \zf$. Since
$V | \xi_0$ is a countably iterable $A$-premouse, it is easy to see
that
\[ V|\xi_0 = L_{\xi_0}(U, A). \]

Finally, we have the following agreement between $L[\cQ_\infty][h]$
and $L(U, A)[G][h]$ (as classes). Here we will use the extra condition
``$\vec X \in L(U,A)$'' in the definition of the Prikry-like forcing
$\bP$ defined in Section \ref{subsec:Prikrypremouse} to ensure that
$\bP \in L(U,A)$.

\begin{lemma}\label{lem:agreement}
  $L[\cQ_\infty][h] = L(U, A)[G][h]$.
\end{lemma}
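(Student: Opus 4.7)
The forward inclusion $L[\cQ_\infty][h] \subseteq L(U,A)[G][h]$ is immediate by design of $\bP$. Requiring the tail data $\vec X$ of each condition to lie in $L(U,A)$ ensures $\bP \in L(U,A)$, so $G \in L(U,A)[G]$; the sequence $\vec\cQ$ is obtained as the union of stems of conditions in $G$, so $\cQ_\infty \in L(U,A)[G]$, and hence $L[\cQ_\infty][h] \subseteq L(U,A)[G][h]$.

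For the reverse inclusion, it suffices to show that $A$, $U$, and $G$ all lie in $L[\cQ_\infty][h]$. Lemma \ref{lem:propQn}\eqref{eq:propQn3} makes the sequence $(\delta_n, \cQ_n)_{n<\omega}$ uniformly definable from $\cQ_\infty$, so combining this with Lemma \ref{LemmaRstarisRV} yields $A = \bR^*_h = \bigcup_n \bR \cap \cQ_\infty[h\upharpoonright\delta_n] \in L[\cQ_\infty][h]$. Once $U \in L[\cQ_\infty][h]$ is established, the forcing $\bP \in L(U,A)$ is also in $L[\cQ_\infty][h]$, and $G$ can then be recovered as the set of $(s,\vec X)\in\bP$ such that $s$ is an initial segment of $\vec\cQ$ and the tail of $\vec\cQ$ past $s$ lies in each of the measure-one sets coded by $\vec X$.

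The crux is showing $U \in L[\cQ_\infty][h]$. My plan is to form, inside $L[\cQ_\infty][h]$, the derived model $D := L(\bR^*_h, \Hom^*_h)$, which by Lemmas \ref{LemmaRstarisRV} and \ref{LemmaHomstarIsDelta21} equals $L(A, \DELTA^2_1)$. Every $\DELTA^2_1$-set is coded by a pair of ${<}\lambda$-absolutely complementing trees lying in some $\cQ_\infty[h\upharpoonright\delta_n]$, so the pointclass $\DELTA^2_1 \cap \mathcal{P}(A)$ is uniformly definable from $\cQ_\infty$ and $h$; hence $D \subseteq L[\cQ_\infty][h]$. To upgrade this to $L(U,A) \subseteq L[\cQ_\infty][h]$, I would invoke the $\SIGMA^2_1$-scale property (Theorem \ref{thm:M1ScalesTheta}), mouse capturing (Theorem \ref{thm:M1MC}), and $\Theta = \theta_0$ in $V$: every set of reals needed as a second-order witness for $\SIGMA^2_1$-truth in $V$ is $\od$ from a real, hence already lives in $V|\xi_0 = L_{\xi_0}(U,A)$, and the derived-model analysis identifies the $\SIGMA^2_1$-universal set $U$ with a set in $D$.

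The main obstacle is exactly this last step: verifying that $\SIGMA^2_1$-truth about reals in $A$ is absolute between $V$ and the intrinsically computed $D$; if it were not, $U^V$ could be strictly richer than its $D$-analogue. A more robust alternative, and the one that meshes cleanly with the constructions to come, is a $\cP$-construction argument: work inside $V|\xi_0[G][h]$ and translate the extenders of $V|\xi_0$ onto $\cQ_\infty[h]$ to obtain a premouse of height $\xi_0$ that is uniformly definable from $\cQ_\infty$ and $h$ alone; this places $V|\xi_0$ — and in particular $U$ — directly into $L[\cQ_\infty][h]$, from which $L(U,A)[G][h] \subseteq L[\cQ_\infty][h]$ follows by the steps already indicated.
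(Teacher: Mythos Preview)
Your overall structure mirrors the paper's proof: the easy inclusion via $\bP \in L(U,A)$, then $A$ via Lemma~\ref{LemmaRstarisRV}, $U$ via the derived model and Lemma~\ref{LemmaHomstarIsDelta21}, and finally $\bP$ and $G$ from $U$, $A$, and $\cQ_\infty$. The paper treats the passage from $\Hom^*_h = \DELTA^2_1$ to $U \in L[\cQ_\infty][h]$ as a one-line consequence of computing the derived model, and you are right to flag it as the substantive step; your hesitation, however, goes further than the situation warrants. Once $L[\cQ_\infty][h]$ sees the full pointclass $\Hom^*_h = \DELTA^2_1$ together with its tree representations coming from the $\cQ_\infty[h\upharpoonright\delta_n]$, the derived model $L(\bR^*_h,\Hom^*_h)$ can be formed there, and a universal set for the envelope of $\Hom^*_h$ (equivalently, for $\SIGMA^2_1$) is definable over it; since $L(U,A)$ does not depend on which universal $\SIGMA^2_1$ set is chosen (cf.\ the remark in the proof of Lemma~\ref{lem:propL[T,a]}), this suffices.

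Your proposed alternative via a $\cP$-construction does not work as stated, for two reasons. First, $V|\xi_0 = L_{\xi_0}(U,A)$ carries no extenders---the extenders of $M_1(A)$ begin above $\xi_0$---so there is nothing to translate and the ``$\cP$-construction'' degenerates to just building $L_{\xi_0}$ over the base. Second, and more seriously, the approach is circular: the $\cP$-construction carried out later in Section~\ref{SubsectionPConst} takes Corollary~\ref{cor:prepPconstr}, itself an immediate consequence of the present lemma, as its starting point. Establishing that $V|\xi_0[G][h]$ and $L_{\xi_0}[\cQ_\infty][h]$ have the same universe is precisely what is being proved here, not something you may assume in order to run a translation procedure.
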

\begin{proof}
  In $L[\cQ_\infty][h]$, one can easily compute the derived model of
  $L[\cQ_\infty]$ associated to $h$. Thus, using Lemma
  \ref{LemmaHomstarIsDelta21}, it follows that
  $U \in L[\cQ_\infty][h]$. By Lemma \ref{LemmaRstarisRV},
  $A = \mathbb{R}^V \in L[\cQ_\infty][h]$ and, using $L(U,A)$ and
  $\cQ_\infty$, one can easily define $\mathbb{P}$ and
  $G$. Conversely, from $G$ one can easily recover $\cQ_\infty$.
\end{proof}

This, together with the observation above, has the following
corollary:

\begin{corollary}\label{cor:prepPconstr}
  $L_{\xi_0}[\cQ_\infty][h] = V|\xi_0[G][h]$.
\end{corollary}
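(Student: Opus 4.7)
The corollary reduces to showing $L_{\xi_0}[\cQ_\infty][h] = L_{\xi_0}(U, A)[G][h]$ via the identity $V | \xi_0 = L_{\xi_0}(U, A)$ recalled immediately before its statement. The class-level equality from Lemma \ref{lem:agreement}, $L[\cQ_\infty][h] = L(U, A)[G][h]$, does most of the work; what remains is to verify that the translations between $(\cQ_\infty, G)$ and $(U, A)$ driving that proof already happen at bounded levels in both hierarchies, strictly below $\xi_0$.

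For the inclusion $\subseteq$, the key point is that $\cQ_\infty$ has ordinal height $\lambda = \sup_n \delta_n$, which is countable in $V$ since each $\cQ_n$ is, so $\lambda \ll \Theta^{L(U, A)} < \xi_0$. Since $\cQ_\infty$ is definable from $G$ as the union of the stems of the conditions in $G$, it appears at a low level of $L(U, A)[G]$, and so constructing from $\cQ_\infty$ and $h$ for $\xi_0$ further steps keeps one inside $V | \xi_0[G][h]$. For $\supseteq$, I would check that each of $U$, $A$, and $G$ appears in $L_\eta[\cQ_\infty][h]$ for some $\eta < \xi_0$: the equality $A = \bR^V = \bR^*_h$ is exactly Lemma \ref{LemmaRstarisRV}; the universal $\Sigma^2_1$ set $U$ is extracted via the derived model of $L[\cQ_\infty]$ relative to $h$, which by Lemma \ref{LemmaHomstarIsDelta21} coincides with $L(U, A)$ and contains $U$ at a low stage of its own hierarchy, well below $\Theta^{L(U, A)}$; and $G$ is then recovered from $\cQ_\infty$ together with the measure-one sides $\vec X \in L(U, A)$ of the conditions involved, as in the proof of Lemma \ref{lem:agreement}.

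The main technical task will be making the bound for the derived-model computation producing $U$ precise. Trees witnessing membership in $\Hom^*_h$ live inside $\cQ_\infty[h \upharpoonright \delta_n]$ for varying $n < \omega$, so $\Hom^*_h$ is already definable in $L_\beta[\cQ_\infty][h]$ for some $\beta$ not much larger than $\lambda$, and the initial segment of $L(\Hom^*_h, \bR^*_h)$ in which $U$ first appears adds only countably many further stages on top of that. Since $\xi_0$ was chosen as the first $\zf$-level of $L(U, A)$ above $\Theta^{L(U, A)}$, the resulting bound is comfortably below $\xi_0$, closing the argument.
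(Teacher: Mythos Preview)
Your approach is correct and matches the paper's, which treats the corollary as immediate from Lemma \ref{lem:agreement} together with the observation $V|\xi_0 = L_{\xi_0}(U,A)$ without giving any further proof. You are simply making explicit the bounded-level translations implicit in that lemma's argument; the minor imprecision in your estimate of where $U$ first appears does not affect the outcome, since all that matters is that each of $\cQ_\infty$, $U$, $A$, and $G$ enters the opposite hierarchy strictly below $\xi_0$.
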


\subsection{Adding extenders on top}\label{SubsectionPConst}

Finally, we use a $\cP$-construction (see for example \cite[Section
3]{St08b} or \cite{SchSt09}) to add extenders witnessing another
Woodin cardinal on top of $\cQ_\infty$. In Lemma
\ref{lem:prepPconstr}, we first extend Corollary \ref{cor:prepPconstr}
to ordinals $\xi > \xi_0$ in order to obtain an appropriate background
universe $W$ for the $\cP$-construction. Using the fact that $V[G][h]$
is a forcing extension of the $A$-premouse $V = M_1(A)$ by a small
forcing, the proof of this lemma is straightforward and similar to the
argument in \cite[Section 3]{St08b}, so we omit it.

\begin{lemma}\label{lem:prepPconstr}
  There is a proper class $(\cQ_\infty, h)$-premouse $W$ such that for
  any $\xi \geq \xi_0$, 
  \begin{enumerate}
  \item $W|\xi$ has the same universe as $V|\xi[G][h]$,
  \item for any $k < \omega$, $\rho_k(W|\xi) = \omega$ if, and only if,
    $\rho_k(V|\xi) = A$, and
  \item for any $k < \omega$, if $\rho_k(W|\xi) > \omega$, then
    $\rho_k(W|\xi) = \rho_k(V|\xi)$ and
    $p_{k+1}(W|\xi) = p_{k+1}(V|\xi)$.
  \end{enumerate}
\end{lemma}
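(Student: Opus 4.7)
The plan is to build $W$ by a $\mathcal{P}$-construction over the pair $(\cQ_\infty, h)$ carried out inside $V[G][h]$. The base case is given by Corollary \ref{cor:prepPconstr}: set $W|\xi_0 = L_{\xi_0}[\cQ_\infty][h]$, which has the same universe as $V|\xi_0[G][h]$. For $\xi > \xi_0$, I would proceed by induction mirroring the extender sequence of $V$. At limit stages $W|\xi$ is the union of the $W|\eta$ for $\eta < \xi$. At successors, if $V|\xi$ is passive, then $W|\xi$ is also passive (with the $J$-hierarchy built over $(\cQ_\infty, h)$); if $V|\xi$ is active with top extender $F$, then define the top extender $F^*$ of $W|\xi$ by restriction,
\[
(a, X) \in F^* \iff a \in [\lh(F)]^{<\omega},\ X \in W|\xi,\ (a, X) \in F.
\]

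Next, I would verify clause $(1)$ by induction. Since $\bP \ast \bS$ is a small forcing from the perspective of the relevant extenders of $V$ (its size is bounded by $\xi_0$, while all extenders on the $V$-sequence that participate in the construction above $\xi_0$ have critical points far larger), the standard small-forcing argument of \cite[Section 3]{St08b} shows that the restriction $F^*$ is again a (long) extender measuring all sets in $W|\xi$, that it satisfies coherence and the initial segment condition, and that conversely $F$ is recoverable from $F^*$ together with the forcing data. In particular, each $W|\xi$ and $V|\xi[G][h]$ have the same universe, since the $J$-hierarchies over $(\cQ_\infty, h)$ and over $(A, G, h)$ agree once one has absorbed the base step.

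For clauses $(2)$ and $(3)$, the point is that the bijection between the $V$-extender sequence and the $W$-extender sequence is uniformly definable (in either direction) over the common universe of $W|\xi$ and $V|\xi[G][h]$, using $\cQ_\infty$, $h$, and the generic $G$ as parameters. Hence $r\Sigma_k$-definability transfers in a controlled way. The only parameter that must be tracked carefully is $\dot{X}$: in the $V$-premouse it is interpreted as $A$, while in the $W$-premouse the corresponding constant is interpreted as $(\cQ_\infty, h)$. Projecting $V|\xi$ to $A$ thus corresponds exactly to projecting $W|\xi$ to $\omega$, since over $W|\xi$ the set $A = \bR^V$ is definable from the generic data and $\omega$. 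Whenever the projection stays above $A$, the two projecta and standard parameters coincide by the definability correspondence. The principal obstacle is checking that the translated extenders fit together into a genuine fine-structural premouse, in particular the initial segment condition under Mitchell--Steel indexing in the presence of the two generic filters; this is precisely the content of the small-forcing $\mathcal{P}$-construction of \cite{St08b} and \cite{SchSt09}, and the smallness of $\bP \ast \bS$ relative to the cutpoints of $V$ above $\xi_0$ makes the adaptation routine.
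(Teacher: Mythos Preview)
Your approach matches what the paper intends: the paper omits the proof, noting only that since $V[G][h]$ is a small-forcing extension of the $A$-premouse $V = M_1(A)$, the construction is as in \cite[Section 3]{St08b}. One terminological slip worth flagging: since $W||\xi$ has the same universe as $V||\xi[G][h] \supsetneq V||\xi$, the extender $F^*$ you want is the small-forcing \emph{lift} of $F$ to $V[G][h]$, not its restriction---your displayed formula, read literally with $F$ the $V$-extender, just returns $F$ and fails to measure the new subsets of $\crit(F)$ added by $G$ and $h$ (though your surrounding prose makes clear you intend $F^*$ to be total on $W||\xi$).
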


Let $\cP = \cP^{W}(L_{\xi_0}[\cQ_\infty])$ be the result of a
$\cP$-construction above $L_{\xi_0}[\cQ_\infty]$ performed inside $W$
and let $\cP_\xi$ for $\xi \geq \xi_0$ denote the levels of the
$\cP$-construction. The following lemma shows that $\cP$ is as
desired.

\begin{lemma}\label{lem:PconstrSuccessful}
The following hold:
  \begin{enumerate}
  \item For $\xi \geq \xi_0$, if $X \subseteq \cP_\xi$ is definable
    over $\cP_\xi$ with parameters from $\cP_\xi$, then
    $X \cap \cQ_\infty | \delta_n \in \cQ_\infty$ for all
    $n < \omega$. \label{eq:PconstrSuccessful1}
  \item For all $\xi \geq \xi_0$, $\rho_\omega(\cP_\xi) \geq
    \lambda$. \label{eq:PconstrSuccessful2}
  \item $\cP$ is a premouse and $\cP[h] =
    W$. \label{eq:PconstrSuccessful4}
  \item $\cP$ has $\omega+1$ Woodin
    cardinals. \label{eq:PconstrSuccessful5}
  \end{enumerate}
\end{lemma}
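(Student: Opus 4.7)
The plan is to prove the four clauses in the stated order, with (1) as the technical crux. Part (2) will then be an immediate corollary, and (3) and (4) combine (1) with standard $\cP$-construction theory.

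For (1), the key preliminary fact is that the $\cP$-construction $\cP = \cP^{W}(L_{\xi_0}[\cQ_\infty])$ is canonical in the sense that $\cP \in V[G]$: since $h$ is $\col(\omega, {<}\lambda)$-generic over $V[G]$ for a forcing lying in $V[G]$, the standard theory of $\cP$-constructions (\cite[Section 3]{St08b}, \cite{SchSt09}) yields $\cP \in V[G]$ with $\cP[h] = W$. Consequently, any $X \subseteq \cP_\xi$ definable over $\cP_\xi$ from parameters in $\cP_\xi$ already lies in $V[G]$, and hence so does $X \cap \cQ_\infty|\delta_n$; let $\dot X$ be a corresponding $\bP$-name. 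Imitating the argument of the claim inside the proof of Lemma \ref{lem:propQn}, for each $y \in \cQ_\infty|\delta_n$ the Prikry property (Lemma \ref{lem:PrikryProperty}) yields a condition in $G$ whose stem has the form $(\cQ_0, \dots, \cQ_{k_y})$ and decides ``$\check y \in \dot X$''. Existential quantification over $k$ and over measure-one trees $\vec Y \in L(U,A)$ then exhibits $X \cap \cQ_\infty|\delta_n$ as a member of $\od_{\cQ_k \cup \{\cQ_k\}} \cap \Pot(\cQ_\infty|\delta_n)$ for some $k > n$. By Lemma \ref{lem:propRax}\eqref{eq:ODinR} applied to $\cQ_{k+1} = \cR^{d_{k+1}}_{\cQ_k}$, this set belongs to $\cQ_{k+1}$; and by the argument from the proof of Lemma \ref{lem:propQn}\eqref{eq:propQn1} (that $\Pot(\delta_n) \cap \cQ_m \subseteq \cQ_n$ for $n < m$, extended in the natural way to subsets of $\cQ_\infty|\delta_n$ using Lemma \ref{lem:propRax}\eqref{eq:ODdeltainR} and the identification $\cQ_n^{-} = \cQ_\infty|\delta_n$), it actually lies in $\cQ_n \subseteq \cQ_\infty$. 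Part (2) is then immediate: if $\rho_\omega(\cP_\xi) \leq \delta_n$ for some $n$, some $Y \subseteq \delta_n$ definable over $\cP_\xi$ would escape $\cP_\xi$, but (1) places $Y = Y \cap \cQ_\infty|\delta_n$ in $\cQ_\infty \subseteq \cP_\xi$, a contradiction.

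For (3), Lemma \ref{lem:prepPconstr} supplies the appropriate background universe, part (2) rules out the only obstruction to the construction proceeding level by level, and the standard $\cP$-construction analysis (\cite[Section 3]{St08b}, \cite{SchSt09}) yields that $\cP$ is a premouse with $\cP[h] = W$. For (4), each $\delta_n$ remains Woodin in $\cP$: any $A \subseteq \delta_n$ in $\cP$ first appears at some $\cP_\xi$ and is hence definable over that $\cP_\xi$, so by (1) we have $A \in \cQ_n$, and the suitability of $\cQ_n$ provides extenders on the $\cQ_n$-sequence---and a fortiori on the $\cP$-sequence---witnessing Woodinness of $\delta_n$ with respect to $A$. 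For the $(\omega+1)$-th Woodin: since $\lambda$ is countable in $W$ while the forcings $\bP$ and $\bS$ are small relative to the unique Woodin cardinal $\delta$ of $V = M_1(A)$, we have $\delta > \lambda$ and $\delta$ remains a cardinal in $W$; standard preservation for $\cP$-constructions then translates the extenders of $V$ witnessing Woodinness of $\delta$ into extenders on the $\cP$-sequence doing the same.

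The main obstacle is (1): verifying carefully that $\cP \in V[G]$ and then adapting the Prikry-style definability argument of Lemma \ref{lem:propQn} from subsets of $\delta_n$ in $L[\vec \cQ]$ to subsets of $\cQ_\infty|\delta_n$ that are merely definable over levels of the $\cP$-construction. The template is Steel's treatment in the unpublished notes \cite{St} and the analogous development for $L(\bR)$ in \cite[Sections 6.5--6.6]{StW16}.
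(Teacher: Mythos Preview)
Your approach is essentially the same as the paper's, and the overall structure (prove (1), deduce (2), defer (3) and (4) to standard $\cP$-construction theory) matches exactly. One point of imprecision: your justification that $\cP \in V[G]$ via ``standard theory of $\cP$-constructions'' is not quite right, since a priori $\cP$ is built inside $W$, which lives in $V[G][h]$. The paper's argument here is that $\cP_\xi$ is definable over $V|\xi[G]$ from $\cQ_\infty$ because the translation between $V|\xi[G][h]$ and $W|\xi$ is definable and $h$ is generic for a \emph{homogeneous} forcing; it is the homogeneity of $\col(\omega,{<}\lambda)$, not generic $\cP$-construction theory, that places $\cP_\xi$ (and hence $X$) in $V[G]$. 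A second minor point: you apply the Prikry property to each $y \in \cQ_\infty|\delta_n$ separately, obtaining stem lengths $k_y$, and then pass to a single $k$; since $\cQ_\infty|\delta_n$ is countable in $V = M_1(A)$, one should simply invoke Lemma~\ref{lem:PrikryProperty} once on the countable set of relevant sentences, as the paper does. Your treatment of (4) is actually more detailed than the paper's (which just cites standard properties), and is correct.
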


\begin{proof}
  For the proof of \eqref{eq:PconstrSuccessful1} note that $\cP_\xi$
  is 
  definable over $V|\xi[G]$ from $\cQ_\infty$ since the translation
  between $V|\xi[G][h]$ and $W|\xi$ is definable and $h$ is generic
  over $V|\xi[G]$ for a homogeneous forcing. Hence, if
  $X \subseteq \cP_\xi$ is definable over $\cP_\xi$ with parameters
  from $\cP_\xi$, then $X \in V[G]$. We can assume without loss of
  generality that $X$ is a set of ordinals and let $\dot X_n$ for
  every $n < \omega$ be a term defining $X \cap \delta_n$ from
  $\cQ_\infty$ and the ordinal parameter $\xi$. Now we can argue as in
  the proof of Lemma \ref{lem:propQn} to obtain
  $X \cap \delta_n \in \cQ_\infty$, as desired.

  Suppose \eqref{eq:PconstrSuccessful2} fails and let $\xi \geq \xi_0$
  be least such that $\rho_\omega(\cP_\xi) = \rho < \lambda$. Say this
  is witnessed by some set of ordinals $X \subseteq \rho$, which is
  definable over $\cP_\xi$ with parameters from $\cP_\xi$, but
  $X \notin \cP_\xi$. Since $\lambda = \bigcup_{n < \omega} \delta_n$,
  there is some $n < \omega$ such that $\rho < \delta_n$. This means
  that $X = X \cap \delta_n \in \cQ_\infty$ by
  \eqref{eq:PconstrSuccessful1}. But $\cQ_\infty \subseteq \cP_\xi$, a
  contradiction.

  Finally, \eqref{eq:PconstrSuccessful4} and
  \eqref{eq:PconstrSuccessful5} now follow from this and standard
  properties of the $\cP$-construction, see for example
  \cite{SchSt09}.
\end{proof}

This finishes the proof of Theorem \ref{TheoremLongIntro} and, by
putting the active extender of $M_1^\sharp(A)$ on an initial of $\cP$,
e.g., as in \cite[Section 2]{FNS10}, also the proof of Theorem
\ref{thm:omega+1Wdns}.

\section{Applications}\label{SectionRemaining}

We sketch a proof of the equiconsistencies of the schemata stated in
Corollary \ref{TheoremLongPD}. 

\begin{proof}[Proof of Corollary \ref{TheoremLongPD}]
  We start with the equiconsistency of \eqref{eq:2}, \eqref{eq:1.5},
  and \eqref{eq:1}. Theorem \ref{TheoremADM1} together with Theorem
  \ref{thm:M1DC} immediately gives that the consistency of
  \eqref{eq:2} implies the consistency of \eqref{eq:1.5}. Now suppose
  that \eqref{eq:1.5} is consistent, say there is a model $M$ of
  $\zf + \dc + \ad$ with $n+5$ Woodin cardinals. Using a fully
  backgrounded extender construction as in \cite{MS94}, it is easy to
  see that $M_n^\sharp(\bR^M)$ exists in $M$ and
  $M_n(\bR^M) \cap \bR = \bR^M$. Moreover, as winning strategies for
  games of length $\omega$ on $\omega$ can be coded by reals,
  $M_n(\bR^M) \models \ad$. Therefore the argument in Section
  \ref{SectOmega+1WdnsFromAD} shows that there is a model of $\zfc$
  with $\omega + n$ Woodin cardinals. The direction from \eqref{eq:1}
  to \eqref{eq:2} follows from \cite{Ne04}, where Neeman argues in
  Appendix A, using results of \cite{MaSt94}, that the existence of a
  model with $\omega+n$ Woodin cardinals and a measurable cardinal
  above them all implies the existence of a sufficiently iterable
  active premouse with $\omega+n$ Woodin cardinals to which Theorem
  2A.3 in \cite{Ne04} applies.

  Next, we argue that \eqref{eq:3} is equiconsistent with \eqref{eq:2}
  and \eqref{eq:1.5}. First, suppose that there is a model $V$ of
  $\zf + \dc + \ad$ and, say, $\PI^1_{n+5}$-determinacy for games of
  length $\omega$ on $\bR$ and work in this model. The methods of
  Section \ref{SectADinM1} can be used to conclude from
  $\PI^1_{n+5}$-determinacy for games of length $\omega$ on reals that
  $M_n^\sharp(\bR)$ exists and is countably iterable (see \cite{AgMu}
  for details). Since $M_n^\sharp(\bR)$ contains all the reals, it in
  particular contains codes for all winning strategies for games on
  natural numbers of length $\omega$ which exist in $V$. Therefore
  $M_n^\sharp(\bR) \vDash \ad$ witnesses \eqref{eq:1.5}. For the other
  direction, suppose there is model of $\zfc$ in which, say,
  $\PI^1_{n+5}$-determinacy holds for games of length $\omega^2$ on
  $\omega$. By the results in Section \ref{SectADinM1}, there a
  countable set of reals $A$ such that
  $M_{n+4}(A) \vDash \text{``}\bR = A\text{''} + \ad$. Now work inside
  $V = M_{n+4}(A)$ and write $\bR = A$. Note that $V$ is a model of
  $\zf+\dc$ (see e.g., \cite{Mu}). Since $M_{n+3}^\sharp(\bR)$ exists
  in $V$, it follows from the proof of projective determinacy in
  \cite{Ne10} that $\PI^1_n$-determinacy holds for games of length
  $\omega$ on $\bR$ (see \cite{AgMu} for details).

  Finally, we argue that \eqref{eq:4} is equiconsistent with the other
  schemata. Suppose there is model of $\zfc$ in which, say,
  $\PI^1_{n+5}$-determinacy holds for games of length $\omega^2$ on
  $\omega$. Then again by the results in Section \ref{SectADinM1}
  there a countable set of reals $A$ such that
  $M_{n+4}(A) \vDash \text{``}\bR = A\text{''} + \ad$ and we can work
  in $V = M_{n+4}(A)$. As $V$ has sufficiently many Woodin cardinals,
  every $\col(\omega, \bR)$-generic extension $V[g]$ is closed under
  $x \mapsto M_{n}^\sharp(x)$ for all reals $x$ and hence satisfies
  $\PI^1_{n+1}$-determinacy for games of length $\omega$ on
  $\omega$. For the other direction we show the consistency of
  \eqref{eq:3}. Suppose that there is a model $V$ of $\zf+\dc+\ad$
  such that, say, $\PI^1_{n+5}$-determinacy for games of length
  $\omega$ on $\omega$ holds in every $\col(\omega,\bR)$-generic
  extension of $V[g]$ of $V$. As $\bR^V$ is a countable set of reals
  in $V[g]$, $M_{n+1}^\sharp(\bR^V)$ exists in $V[g]$. By homogeneity
  of the forcing, this implies that in $V$ there is a set of reals
  coding a model with $n+1$ Woodin cardinals which contains
  $\bR^V$. This suffices to run the argument in \cite{Ne10} mentioned
  above which yields $\PI^1_n$-determinacy for games of length
  $\omega$ on $\bR$.
\end{proof}

We finish this article by sketching the modifications which are needed
to prove Theorems \ref{TheoremLongomega+omegaIntro} and
\ref{TheoremLongSalphaIntro}.

\begin{proof}[Proof of Theorem \ref{TheoremLongomega+omegaIntro}]
  Instead of Theorem \ref{TheoremADM1}, we now aim to obtain a
  countable set of reals $A$ such that $M_\omega(A) \cap \bR = A$ and
  $M_\omega(A) \vDash \zf + \ad$. For this purpose, we replace
  1-smallness in the definition of $\varphi$-witness (see Definition
  \ref{def:phiwitness}) by $\omega$-smallness. Moreover, we replace
  $\Pi^1_2$-iterability in the definition of the game
  $\cG_{\varphi,\psi}$ (see Definition \ref{def:Gvarphipsi}) by the
  notion of \emph{weak iterability} (also called
  $\Game^\bR\Pi^1_1$-iterability) as in \cite[Definition
  7.7]{St10}. Note that determinacy of games of length $\omega^2$ with
  $\Game^\bR\Pi^1_1$ payoff suffices to show that this game is
  determined. Furthermore, Theorem 7.10 in \cite{St10} suffices to
  carry out the comparison arguments used to prove Theorem
  \ref{TheoremADM1}. The proofs in Sections \ref{SectionDC} and
  \ref{SectOmega+1WdnsFromAD} straightforwardly generalize to this
  context.
\end{proof}

\begin{proof}[Proof of Theorem \ref{TheoremLongSalphaIntro}]
  Instead of Theorem \ref{TheoremADM1}, we now aim to obtain a
  countable set of reals $A$ such that there is an $A$-premouse $M$ of
  class $S_\alpha$ such that $M \cap \bR = A$ and
  $M \vDash \zf + \ad$. For this purpose, we replace 1-smallness in
  the definition of $\varphi$-witness (see Definition
  \ref{def:phiwitness}) by not being of class $S_\alpha$. Moreover, we
  replace $\Pi^1_2$-iterability in the definition of the game
  $\cG_{\varphi,\psi}$ (see Definition \ref{def:Gvarphipsi}) by the
  notion of $\Pi^1_\alpha$-iterability as in \cite[Definition
  4.1]{Ag}. Note that determinacy of games of length $\omega^2$ with
  $\sigma$-projective payoff suffices to show that this game is
  determined. Furthermore, Lemmas 2.19 and 4.3 in \cite{Ag} suffice to
  carry out the comparison arguments used to prove Theorem
  \ref{TheoremADM1}. The proofs in Sections \ref{SectionDC} and
  \ref{SectOmega+1WdnsFromAD} straightforwardly generalize to this
  context.
\end{proof}


\bibliographystyle{abstract}
\bibliography{References}

\end{document}